\documentclass[final, a4paper, 11pt]{article}

\usepackage{amsfonts}
\usepackage{amsmath}
\usepackage{amssymb}
\usepackage{amscd}
\usepackage{amsthm}

\usepackage[mathscr]{euscript}
\usepackage{color}
\usepackage{fancyhdr}
\usepackage{dsfont}
\usepackage[a4paper,scale=0.752]{geometry}
\usepackage{hyperref}

\usepackage{bm}
\usepackage{cite}
\usepackage{showkeys}

\usepackage{dblaccnt}
\usepackage{accents}
\usepackage{graphicx}
\usepackage{array}

\usepackage{hyperref}

\newtheorem{definition}{Definition}
\newtheorem{lemma}[definition]{Lemma}
\newtheorem{theorem}[definition]{Theorem}
\newtheorem{corollary}[definition]{Corollary}

\newtheorem{assumption}[definition]{Assumption}

\newtheorem{remark}[definition]{Remark}

\newcommand*{\N}{\ensuremath{\mathbb{N}}}
\newcommand*{\Z}{\ensuremath{\mathbb{Z}}}
\newcommand*{\R}{\ensuremath{\mathbb{R}}}
\newcommand*{\C}{\ensuremath{\mathbb{C}}}
\renewcommand{\i}{\mathrm{i}}
\renewcommand{\phi}{\varphi}
\renewcommand{\rho}{{\varrho}}
\renewcommand{\epsilon}{{\varepsilon}}
\newcommand{\vertiii}[1]{{\left\vert\kern-0.25ex\left\vert\kern-0.25ex\left\vert #1 
    \right\vert\kern-0.25ex\right\vert\kern-0.25ex\right\vert}}
\renewcommand{\d}[1]{\,\mathrm{d}#1 \,}

\newcommand{\D}{\mathcal{D}}
\newcommand{\I}{\mathcal{I}}
\newcommand{\A}{\mathcal{A}}

\newcommand{\J}{\mathcal{J}} 

\renewcommand{\S}{\mathcal{S}}
\newcommand{\M}{\mathcal{M}}

\renewcommand{\k}{\underline{k}}

\newcommand{\grad}{\nabla}

\newcommand{\W}{{W_{\hspace*{-1pt}\Lambda}}} 
\newcommand{\Wast}{{W_{\hspace*{-1pt}\Lambda^\ast}}} 

\newlength{\dhatheight}

\usepackage{color}
\newcommand{\high}[1] {{\color{black}{#1}}}

\setlength{\unitlength}{1cm}

\begin{document}

\sloppy

\title{A High Order Numerical Method for Scattering from Locally Perturbed Periodic Surfaces}
\author{Ruming Zhang\thanks{Center for Industrial Mathematics, University of Bremen
; \texttt{rzhang@uni-bremen.de}}}
\date{}

\maketitle

\begin{abstract}
In this paper, we will introduce a high order numerical method to solve the scattering problems with non-periodic incident fields and (locally perturbed) periodic surfaces. For the problems we are considering,  the classical methods to treat quasi-periodic scattering problems no longer work, while a Bloch transform based numerical method was proposed in \cite{Lechl2017}. This numerical method, on one hand, is able to solve this kind of problems convergently; on the other hand, \high{it} takes up a lot of time and memory during the computation. The motivation of this paper is to improve this numerical method, from the regularity results of the Bloch transform of the total field, which have been studied in \cite{Zhang2017d}. As the set of the singularities of the total field is discrete in $\R$, and finite in one periodic cell, we are able to improve the numerical method by designing a proper integration contour with special conditions at the singularities. With a good choice of the transformation, we can prove that the new numerical method could possess a super algebraic convergence rate. \high{This new method improves the efficient significantly. At the end of this paper, several numerical results will be provided to show the fast convergence of the new method.} The method also provides a possibility to solve more complicated problems efficiently, e.g., three dimensional problems, or electromagnetic scattering problems.
\end{abstract}

\section{Introduction}
  
In this paper, we will propose an efficient numerical method of solving scattering problems from locally perturbed periodic surfaces with non-periodic incident fields. \high{For the special case that a quasi-periodic incident field scattered by a smooth enough periodic surface,  a classical way is to reduce the problem into one periodic cell, see \cite{Kirsc1993,Schmi2003}. Then the reduced problem could be solved by numerical methods, such as the integral equation method (see \cite{Meier2000}), and the finite element method (see \cite{Bao1995,George2011}).} However, when the incident field or the surface is no longer (quasi-)periodic, the classical method fails,  new numerical methods are required for these more difficult problems. In \cite{Lechl2017}, a Floquet-Bloch based numerical method was applied to solve this kind of problems. In this paper, the method is improved by reducing the computational  complexity significantly.

The Floquet-Bloch based methods were used in \cite{Coatl2012,Hadda2016,Lechl2016a,Lechl2016b,Lechl2017} for the numerical solutions in (locally perturbed) periodic scattering problems with non-periodic incident fields.  For theoretical results, see \cite{Lechl2015e} and \cite{Lechl2016}. \high{There is also another way to solve the locally perturbed periodic problems,  by studying the operators defined on the transfparent edge of a periodic cell.} In \cite{Joly2006,Fliss2009,Fliss2015}, a method that approximates the Dirichlet-to-Neumann map by solving an operator equation is applied, while in 
\cite{Ehrhardt2009,Ehrhardt2009a}, the problems were solved by approximating the Scattered-to-Scattered map by a so-called doubling recursive procedure.

 The method in this paper follows the results in \cite{Lechl2017,Zhang2017d}. To solve the scattering problems with a locally perturbed surface, firstly we have to transform the problem into one defined on a periodic domain. With the Bloch transform, the problem is then transformed into an equivalent  coupled system of quasi-periodic problems, which is a three dimensional problem defined in a bounded domain. The error comes from two parts in the algorithm -- the finite element method and the numerical inverse Bloch transform. As the error brought by the finite element method could be estimated by classical methods, in this paper, we will improve the numerical scheme by the study of the numerical inverse Bloch transform. To study this, a detailed behaviour of the Bloch transformed solutions is required, which has been shown in \cite{Zhang2017d}.
 
Denote by $\J_\Omega$ the Bloch transform defined on the periodic domain $\Omega$, then the total field $u$ is transformed into $w(\alpha,x)=\left(\J_\Omega u\right)(\alpha,x)$. From \cite{Zhang2017d}, the singularities in $\alpha$ locate in a discrete set in $\R$, with a square root like singularity in the neighbourhoods.  As the inverse Bloch transform has the representation of the integral on  a bounded interval, in this paper, we will pick up a new curve and let the inverse Bloch transform be defined on the curve such that the newly defined integrand satisfies some smoothness and periodic conditions. This technique to treat integrations with singularities is quite classical, and it has been described in detail and  applied to the numerical solution of the scattering problems from bounded obstacles with corners in Chap 3.5, \cite{Colto2013} (pp. 83-85). From the well known result of the convergent rule for periodic functions, see \cite{Atkin1989}, we can prove that with the new integral curve, the numerical method will converge much faster, depending on the choice of the new integration curve. 

The high order numerical method is available when the incident fields satisfy certain conditions. As far as we know, both the point sources and the Herglotz wave functions satisfy the conditions, thus the scattering problems with these incident fields  could be solved numerically by the high order method introduced in this paper. Numerical examples for these cases will be presented at the end of this paper. However, the plane waves do not satisfy the conditions, so the numerical method fails in this case.

The rest of the paper is organized as follows. In Section 2, we will review some results for Bloch transform and the locally perturbed periodic problems. In Section 3, we recall the singularity results for the Bloch transform of the total field. In Section 4, we will introduce the inverse Bloch transform defined on a new contour. In Section 5, we will study the finite element method with the new inverse Bloch transform. In Section 6, some numerical examples will be shown.

\section{Bloch transform and the locally perturbed periodic problems}\label{sec:review}

\subsection{Introduction to the Scattering problems}
Suppose $\zeta$ is a Liptschiz periodic function with period $\Lambda$, and the periodic surface is defined by
\begin{equation*}
\Gamma:=\{(x_1,\zeta(x_1)):\,x_1\in\R\}.
\end{equation*} 
Let $\zeta_p$ be a function such that ${\rm supp}(\zeta_p-\zeta)$ is compact in $\R$. For simplicity, assume that ${\rm supp}(\zeta_p-\zeta)\subset \left(-\frac{\Lambda}{2},\frac{\Lambda}{2}\right]$, then define the locally perturbed surface by
\begin{equation*}
\Gamma_p:=\{(x_1,\zeta_p(x_1)):\,x_1\in\R\}.
\end{equation*}
Define $\Lambda^*=\frac{2\pi}{\Lambda}$, and
\begin{equation*}
\W=\left(-\frac{\Lambda}{2},\frac{\Lambda}{2}\right],\quad\Wast=\left(-\frac{\Lambda^*}{2},\frac{\Lambda^*}{2}\right]=\left(-\frac{\pi}{\Lambda},\frac{\pi}{\Lambda}\right]. 
\end{equation*}
We can define the space above the surfaces $\Gamma$ and $\Gamma_p$ by
\begin{eqnarray*}
&&\Omega:=\{(x_1,x_2):\,x_1\in\R,\,x_2>\zeta(x_1)\}\\
&&\Omega_p:=\{(x_1,x_2):\,x_1\in\R,\,x_2>\zeta_p(x_1)\}.
\end{eqnarray*}
For simplicity, we assume that $\Gamma$ and $\Gamma_p$ are both above the straight line $\R\times\{0\}$, and $H$ be a positive number such that $H>\max\{\sup\{\zeta\},\sup\{\zeta_p\}\}$. Define the line $\Gamma_H=\R\times\{H\}$ and the spaces with finite height
\begin{eqnarray*}
&&\Omega^H:=\{(x_1,x_2):\,x_1\in\R,\,\zeta(x_1)<x_2<H\}\\
&&\Omega_p^H:=\{(x_1,x_2):\,x_1\in\R,\,\zeta_p(x_1)<x_2<H\}.
\end{eqnarray*}
We can also define the curves and  domains in one periodic cell:
\begin{eqnarray*}
&&\Gamma^\Lambda=\Gamma\cap\W,\quad\Gamma^\Lambda_H=\Gamma_H\cap\W;\\
&&\Omega^\Lambda=\Omega\cap \high{\W}\times \R,\quad\Omega^\Lambda_H=\Omega^H\cap \W\times\R.
\end{eqnarray*}

\begin{figure}[htb]
\centering
\includegraphics[width=0.6\textwidth]{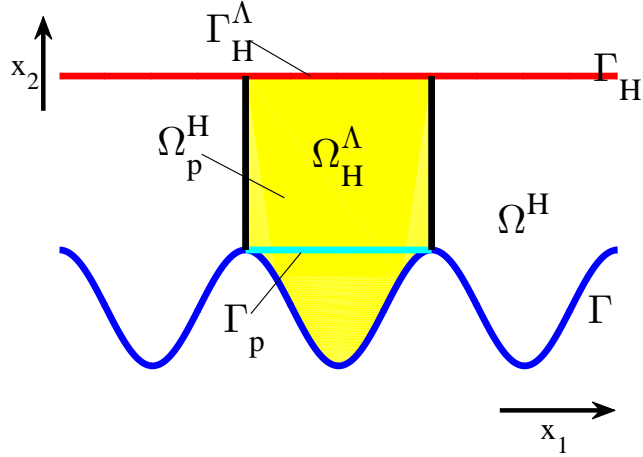} 
\caption{Locally perturbed periodic structure.}
\end{figure}

Given an incident field $u^i$, the scattering problem is to find a total field $u$ such that it satisfies the Helmholtz equation in the space $\Omega_p$ 
\begin{equation}
\Delta u+ k^2 u=0\quad \text{ in }\Omega_p
\end{equation}
and the scattered field $u^s:=u-u^i$ propagates upward, i.e.,
\begin{equation}
u^s(x)=\frac{1}{2\pi}\int_\R e^{\i x_1 \xi+\i\sqrt{k^2-|\xi|^2}(x_2-H)}\widehat{u}^s(\xi,H)\d\xi\quad x_2>H,
\end{equation}
where $\widehat{u}^s(\xi,H)$ is the Fourier transform of $u^s(\cdot,H)$. The upward propagating condition is equivalent to 
\begin{equation}
\frac{\partial u}{\partial x_2}=T^+ u|_{\Gamma_H}+\left(\frac{\partial u^i}{\partial x_2}-T^+\left[u^i|_{\Gamma_H}\right]\right)\quad\text{ on }\Gamma_H,
\end{equation}
where $T^+$ is the Dirichlet-to-Neumann map \high{(see \cite{Chand2005})} defined by
\begin{equation}
T^+ \phi(x_1)=\frac{\i}{2\pi}\int_\R\sqrt{k^2-|\xi|^2} e^{\i x_1 \xi+\i\sqrt{k^2-|\xi|^2}(x_2-H)}\widehat{\phi}(\xi)\d\xi
\end{equation}
that is bounded from $H^{1/2}_r(\Gamma_H)$ to $H^{-1/2}_r(\Gamma_H)$ for all $|r|<1$. Define the total field $u=u^i+u^s$, then we can formulate the variational problem, i.e., to find $u\in \widetilde{H}^1_r(\Omega^H_p)$ such that for any $\phi\in H^1(\Omega^H_p)$ with a compact support such that
\begin{equation}\label{eq:var_origional}
\int_{\Omega^H_p}\left[\nabla u\cdot\nabla \overline{\phi}-k^2u\overline{\phi}\right]\d x-\int_{\Gamma_H}T^+(u|_{\Gamma_H}) \overline{\phi}\d s=\int_{\Gamma_H}\left[\frac{\partial u^i}{\partial x_2}-T^+(u^i|_{\Gamma_H})\right]\overline{\phi}\d s,
\end{equation} 
where $H_r^s(\Omega^H_p)$ is a weighted space defined by
\begin{equation*}
H_r^s(\Omega^H_p):=\left\{\phi\in \mathcal{D}'(\Omega^H_p):\, (1+|x_1|^2)^{r/2}\phi(x)\in H^s(\Omega^H_p)\right\},
\end{equation*}
and $\widetilde{H}_r^s(\Omega^H_p)$ is a subspace defined by
\begin{equation*}
\widetilde{H}_r^s(\Omega^H_p)=\left\{\phi\in H_r^s(\Omega^H_p):\,\phi|_{\Gamma_p}=0\right\}.
\end{equation*}

In \cite{Chand2010}, the unique solvability of the variational form was proved.
\begin{theorem}
For any $|r|<1$, given any $u^i\in H^1_r(\Omega^H_p)$, there is a unique \high{solution} $u\in \widetilde{H}^1_r(\Omega_p^H)$ of the variational problem \eqref{eq:var_origional}.
\end{theorem}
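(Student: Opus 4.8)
The plan is to prove existence and uniqueness of the solution $u \in \widetilde{H}^1_r(\Omega^H_p)$ by a perturbation argument combined with the known solvability theory for the unperturbed periodic problem. First I would recall the standard framework for scattering from (non-perturbed) periodic surfaces: writing the sesquilinear form on the left-hand side of \eqref{eq:var_origional} as $a(u,\phi)$, one knows that the associated operator $A_r : \widetilde{H}^1_r(\Omega^H) \to (\widetilde{H}^1_{-r}(\Omega^H))^*$ is an isomorphism for all $|r|<1$; this is the content of the Bloch-transform analysis of the periodic problem (as in \cite{Lechl2017} and the references to \cite{Chand2005,Chand2010}), where the critical point is that the Dirichlet-to-Neumann operator $T^+$ together with the Helmholtz form is bounded and coercive up to a compact perturbation, and the Bloch transform decouples the problem into a family of quasi-periodic problems parametrized by $\alpha \in \Wast$, each of which is uniquely solvable with uniformly bounded inverse.

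The second step is to transfer the locally perturbed problem on $\Omega^H_p$ to the unperturbed domain $\Omega^H$ via a bi-Lipschitz change of variables $\Phi$ that equals the identity outside the compact set $\mathrm{supp}(\zeta_p-\zeta) \times \R$ and maps $\Omega^H_p$ onto $\Omega^H$ (for instance a vertical shear $\Phi(x_1,x_2) = (x_1, x_2 + \chi(x_2)(\zeta_p(x_1)-\zeta(x_1)))$ with a suitable cutoff $\chi$). Pulling back the variational equation, the transformed problem has the form $a(u,\phi) + b(u,\phi) = \langle f,\phi\rangle$ where $b$ is a sesquilinear form supported in the compact perturbation region, hence the associated operator $B_r$ is compact from $\widetilde{H}^1_r(\Omega^H)$ into $(\widetilde{H}^1_{-r}(\Omega^H))^*$ (the coefficients of $b$ differ from the identity/constants only on a bounded set, and compact support gives the compact embedding needed). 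Since $A_r$ is an isomorphism, $A_r + B_r$ is Fredholm of index zero, so existence follows from uniqueness.

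The third and genuinely substantive step is uniqueness: one must show that the homogeneous problem has only the trivial solution in $\widetilde{H}^1_r(\Omega^H_p)$. Here the standard strategy is to first establish it for $r=0$ (the energy space), where a solution $u$ of the homogeneous problem, extended above $\Gamma_H$ by the upward propagating representation, satisfies the Helmholtz equation in all of $\Omega_p$ with the radiation condition; a Rellich-type identity together with unique continuation then forces $u \equiv 0$ — this is exactly the argument carried out in \cite{Chand2010}, and I would simply invoke it. For $|r| < 1$ with $r \neq 0$, one uses a regularity/decay bootstrap: since any $u \in \widetilde{H}^1_r(\Omega^H_p)$ for $r<0$ already lies in a larger space than $L^2$, and the right-hand side is zero, one shows that a solution in the weighted space with $r<0$ is in fact in the energy space (by the uniform resolvent bounds of the Bloch-transformed operators, which hold on an interval of $\alpha$'s and give integrability), and conversely for $r>0$ one notes $\widetilde{H}^1_r \subset \widetilde{H}^1_0$, so uniqueness in the energy space implies it in both weighted ranges.

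The main obstacle I anticipate is the uniqueness argument in the weighted spaces with negative weight $r<0$: the solution is then allowed to grow at infinity, so the Rellich identity argument does not apply directly, and one must genuinely exploit the fine structure of the Bloch-transformed resolvent family — specifically that the quasi-periodic problems remain uniquely solvable for all $\alpha \in \Wast$ including the finitely many "cut-off" or "Wood anomaly" values, or that near those values the singularity is only of square-root type and hence still integrable — to conclude that a weighted-space solution of the homogeneous equation cannot exist. Everything else (the Fredholm reduction, the change of variables, the $r=0$ uniqueness) is either routine or quotable from \cite{Chand2010,Lechl2017}.
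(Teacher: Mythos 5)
The paper itself does not prove this theorem: it is quoted from \cite{Chand2010}, where unique solvability in $\widetilde{H}^1_r$ for $|r|<1$ is established \emph{directly} for scattering by an arbitrary Lipschitz graph confined between two horizontal lines, via a priori (inf-sup type) estimates for the variational form in the weighted spaces, the boundedness of $T^+$ on $H^{\pm 1/2}_r(\Gamma_H)$ for $|r|<1$ being the key ingredient (cf.\ \cite{Chand2005}). Since $\Gamma_p$ is itself such a Lipschitz graph, no periodicity, no Bloch transform, and no perturbation-off-the-periodic-case argument is used or needed there. Your route is therefore genuinely different from the one the paper relies on, and it contains a step that fails as stated.

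The failing step is the Fredholm reduction. After the change of variables, the perturbation form is $b(u,\phi)=\int_{K}\bigl[(A_p-I_2)\nabla u\cdot\nabla\overline{\phi}-k^2(c_p-1)u\overline{\phi}\bigr]\d x$ with coefficients supported in a compact set $K$. Compact support of the coefficients makes only the zeroth-order term compact (via Rellich--Kondrachov on $K$); the principal-part term $u\mapsto (A_p-I_2)\nabla u\cdot\nabla\overline{(\cdot)}$ is bounded but \emph{not} compact from $\widetilde{H}^1_r(\Omega^H)$ into $\bigl(\widetilde{H}^1_{-r}(\Omega^H)\bigr)^*$: for a weakly null sequence such as $u_n=n^{-1}\chi(x)e^{\i n x_1}$ with $\chi$ supported in $K$, one has $b(u_n,u_n)\to\int_K (A_p-I_2)_{11}\chi^2\d x\neq 0$, so the dual norms of $B_ru_n$ do not vanish. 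Hence ``$A_r+B_r$ is Fredholm of index zero'' does not follow from ``$A_r$ is an isomorphism and $b$ has compactly supported coefficients''; to proceed along your lines one would have to re-establish the inf-sup estimate for the perturbed principal part (using that $A_p$ is uniformly positive definite), which amounts to re-proving the rough-surface result. Your step three correctly identifies uniqueness for $-1<r<0$ as the substantive difficulty but leaves it unresolved; the standard resolution is a commutator argument with the weight $(1+|x_1|^2)^{r/2}$ showing that homogeneous solutions in $H^1_r$ with $r<0$ already lie in the energy space, after which the $r=0$ Rellich-identity uniqueness applies. As it stands, the proposal neither closes that gap nor delivers the existence half, so it does not constitute a proof.
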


\subsection{Bloch transform and well-posedness of the scattering problems}

Firstly, we have to introduce the Bloch transform defined on the periodic space $\Omega^H$. For any function $\phi\in C_0^\infty(\Omega^H)$, define
\begin{equation*}
\left(\J_{\Omega}\phi\right)(\alpha,x)=\sum_{j\in\Z}\phi(x_1+\Lambda j,x_2)e^{-\i\alpha\Lambda j},\quad\alpha\in\Wast;\,x\in\high{\Omega^H}.
\end{equation*}
\begin{remark}
In this paper, we denote the one dimensional Bloch transform defined on any periodic domain in $\R^2$ by $\J_\Omega$.
\end{remark}
For any fixed $\alpha\in\Wast$, the function $\left(\J_\Omega\phi\right)$ is $\alpha$-quasi-periodic with period $\Lambda$, i.e.,
\begin{equation*}
\left(\J_\Omega\phi\right)\left(\alpha,\left(\begin{matrix}
x_1+\Lambda j\\x_2
\end{matrix}\right)\right)=e^{\i\alpha\Lambda}\left(\J_\Omega\phi\right).
\end{equation*}
\high{For the positive integer $m$, let the space $H^m(\Wast;H^s(\Omega^\Lambda_H))$ be defined with the norm
\begin{equation*}
\|\phi\|_{H^m(\Wast;H^s(\Omega^\Lambda_H))}=\left[\sum_{\ell=0}^m\int_\Wast\|\partial_\alpha^\ell\phi(\alpha,\cdot)\|^2_{H^s(\Omega^\Lambda_H)}\right]^{1/2},
\end{equation*}
then extend the definition to any real number $r$ by interpolation and duality arguments.} Define the subspace $H_0^r(\Wast;H^s_\alpha(\Omega^\Lambda_H))$ with a further condition that $\phi\in H^r(\Wast;H^s(\Omega^\Lambda_H))$ is $\Lambda^*$-periodic in $\alpha$ and $\phi(\alpha,\cdot)\in H^s_\alpha(\Omega^\Lambda_H)$. The mapping property of the Bloch transform was proved in \cite{Lechl2016}.
\begin{theorem}
The Bloch transform extends to an isomorphism between $H^s_r(\Omega)$ and $H_0^r(\Wast;H^s_\alpha(\Omega^\Lambda_H))$ for any $s,\,r\in\R$. When $s=r=0$, $\J_\Omega$ is an isometry with the inverse operator 
\begin{equation}
\left(\J^{-1}_{\Omega^H}w\right)\left(\begin{matrix}
x_1+\Lambda j\\x_2
\end{matrix}\right)=\left[\frac{\Lambda}{2\pi}\right]^{1/2}\int_\Wast w(\alpha,x)e^{\i\Lambda j\alpha}\d\alpha,\quad x\in\Omega^\Lambda_H,\,j\in\Z.
\end{equation}
Moreover, the $L^2$-adjoint operator $J^*_{\Omega}$ equals to its inverse.
\end{theorem}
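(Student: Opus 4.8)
The plan is to establish the $L^2$ statement first and then propagate smoothness and decay separately. \textbf{Step 1 ($s=r=0$).} I would verify the isometry on the dense subspace $C_0^\infty$. For $\phi\in C_0^\infty$ and fixed $x\in\Omega^\Lambda_H$ the numbers $\phi(x_1+\Lambda j,x_2)$ are, up to the factor $(\Lambda/2\pi)^{1/2}$, the Fourier coefficients of $\alpha\mapsto\J_\Omega\phi(\alpha,x)$ with respect to the orthonormal basis $\{(\Lambda/2\pi)^{1/2}e^{-\i\alpha\Lambda j}\}_{j\in\Z}$ of $L^2(\Wast)$, so Parseval gives
\[
\int_\Wast\big|\J_\Omega\phi(\alpha,x)\big|^2\d\alpha=\sum_{j\in\Z}\big|\phi(x_1+\Lambda j,x_2)\big|^2 .
\]
Integrating over $\Omega^\Lambda_H$ and decomposing $\Omega^H$ into the $\Lambda$-translates of the reference cell yields $\|\J_\Omega\phi\|^2=\|\phi\|^2$, so $\J_\Omega$ extends to an isometry. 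Its range lies in the closed subspace of $w\in L^2(\Wast;L^2(\Omega^\Lambda_H))$ that are $\alpha$-quasi-periodic in $x$ and $\Lambda^\ast$-periodic in $\alpha$, i.e. in $H^0_0(\Wast;H^0_\alpha(\Omega^\Lambda_H))$; for any such $w$ the right-hand side of the stated inversion formula is well defined (the $\Lambda^\ast$-periodicity makes $\int_\Wast w(\alpha,\cdot)e^{\i\Lambda j\alpha}\d\alpha$ the $j$-th Fourier coefficient), and a second application of Parseval shows the function $\phi$ so defined satisfies $\|\phi\|=\|w\|$ and $\J_\Omega\phi=w$. Hence $\J_\Omega$ is onto, the displayed operator is its inverse, and, since an onto isometry is unitary, $\J_\Omega^\ast=\J_\Omega^{-1}$.

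\textbf{Step 2 (the spatial index $s$).} Because $\J_\Omega$ is merely a weighted superposition of translates in $x_1$, it commutes with every constant-coefficient differential operator $\partial_x^\beta$, and after the quasi-periodic identification with periodic differential operators on the cell. Combining this with Step 1, for each nonnegative integer $s$ the map $\J_\Omega$ is, up to equivalence of norms, an isometry of $H^s_0(\Omega^H)$ onto $H^0_0(\Wast;H^s_\alpha(\Omega^\Lambda_H))$; negative and non-integer $s$ then follow by duality (using $\J_\Omega^\ast=\J_\Omega^{-1}$) and interpolation along the $H^s$ scale, which is available since $\Omega^\Lambda_H$ has a Lipschitz boundary.

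\textbf{Step 3 (the weighted index $r$, the main point).} Here $\alpha$-regularity on the transform side must be matched with $x_1$-decay on the physical side, through the identity, valid for $\phi\in C_0^\infty$ and $m\in\N$,
\[
\partial_\alpha^m(\J_\Omega\phi)(\alpha,x)=\sum_{j\in\Z}(-\i\Lambda j)^m\,\phi(x_1+\Lambda j,x_2)\,e^{-\i\alpha\Lambda j},
\]
which exhibits $\partial_\alpha^m\J_\Omega\phi$ as (a constant times) the Bloch transform of the function whose $j$-th cell equals that of $\phi$ multiplied by $(\Lambda j)^m$. Writing $\phi_j(x):=\phi(x_1+\Lambda j,x_2)$ for $x\in\Omega^\Lambda_H$, repeated use of Step 1 gives
\[
\|\J_\Omega\phi\|^2_{H^m(\Wast;L^2(\Omega^\Lambda_H))}=\sum_{\ell=0}^m\sum_{j\in\Z}(\Lambda|j|)^{2\ell}\|\phi_j\|^2_{L^2(\Omega^\Lambda_H)}\asymp\sum_{j\in\Z}(1+|\Lambda j|^2)^m\|\phi_j\|^2_{L^2(\Omega^\Lambda_H)} .
\]
The obstacle I expect to spend the most care on is comparing the right-hand side with $\|\phi\|^2_{H^0_m(\Omega^H)}=\int_{\Omega^H}(1+|x_1|^2)^m|\phi|^2\d x$: writing the integral as a sum over cells and translating cell $j$ back to $\Omega^\Lambda_H$, one needs that $(1+|x_1+\Lambda j|^2)$ is bounded above and below by fixed constant multiples of $(1+|\Lambda j|^2)$ uniformly for $x_1\in\W$ and all $j\in\Z$, which holds precisely because the cell width $\Lambda$ is fixed; this yields $\|\J_\Omega\phi\|_{H^m(\Wast;L^2)}\asymp\|\phi\|_{H^0_m(\Omega^H)}$. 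Inserting spatial derivatives as in Step 2 upgrades this to $\|\J_\Omega\phi\|_{H^m(\Wast;H^s_\alpha)}\asymp\|\phi\|_{H^s_m(\Omega^H)}$ for all nonnegative integers $m$ and all real $s$, the $\alpha$- and $x$-operations acting on independent tensor factors. Finally, arbitrary real $r$ is reached by duality (again via $\J_\Omega^\ast=\J_\Omega^{-1}$) and by interpolation in the $\alpha$-variable between integer levels; the $\Lambda^\ast$-periodicity in $\alpha$ and the $\alpha$-quasi-periodicity in $x$ built into $H^r_0(\Wast;H^s_\alpha(\Omega^\Lambda_H))$ are exactly the conditions keeping the inversion formula well defined and the map onto at every level, which closes the argument.
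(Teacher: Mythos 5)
The paper offers no proof of this statement --- it is quoted from \cite{Lechl2016} --- so there is no internal argument to compare against; judged on its own, your proof is essentially the standard one from that reference and is sound. The three-step structure (Parseval on $C_0^\infty$ for $s=r=0$, hence a surjective isometry and $\J_\Omega^\ast=\J_\Omega^{-1}$; commutation with $\partial_x^\beta$ for the spatial index; the correspondence between $\partial_\alpha^m$ and multiplication of the $j$-th cell by $(\Lambda j)^m$, together with $(1+|x_1+\Lambda j|^2)\asymp(1+|\Lambda j|^2)$ uniformly for $x_1\in\W$, for the weight index; duality and interpolation for the remaining $(r,s)$) is exactly the right decomposition. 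Two small points are worth flagging. First, as you implicitly noticed, the isometry and the displayed inversion formula are mutually consistent only if the forward transform carries the normalization $\left[\Lambda/(2\pi)\right]^{1/2}$; this factor is present in \cite{Lechl2016} but missing from the definition reproduced in this paper, and your Parseval identity silently uses the normalized version, which is the correct reading. Second, when $s\neq 0$ and $r\neq 0$ simultaneously, the weight $(1+|x_1|^2)^{r/2}$ does not commute with $\partial_x^\beta$, so the claimed independence of the $\alpha$- and $x$-operations needs the standard remark that all derivatives of the weight are dominated by the weight itself, giving equivalence of the two natural norms on $H^s_r(\Omega^H)$; with that added, your cell-by-cell comparison goes through unchanged.
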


As the Bloch transform is defined on periodic domains, we have to transform the problem defined on a locally perturbed domain into one defined on a periodic domain. Following \cite{Lechl2016,Lechl2017}, define a  diffeomorphism $\Phi_p$ from $\Omega^H$ into $\Omega_p^H$ by
\begin{equation*}
\Phi_p(x_1,x_2)=\left(x_1,x_2+\frac{(x_2-H)^3}{(\zeta(x_1)-H)^3}(\zeta_p(x_1)-\zeta(x_1)\right),
\end{equation*}
which has a \high{support contained} in $\Omega^\Lambda_H$. \high{Then $u_T:=u\circ\Phi_p$ satisfies the following variational problem}
\begin{equation}\label{eq:var_trans}
\begin{aligned}
\int_{\Omega_H}\left[A_p\nabla u_T\cdot\nabla\overline{\phi_T}-k^2 c_p u_T\overline{\phi_T}\right]\d x&-\int_{\Gamma_H}T^+(u_T|_{\Gamma_H})\overline{\phi_T}\d s\\&=\int_{\Gamma_H}\left[\frac{\partial u^i}{\partial x_2}-T^+(u^i|_{\Gamma_H})\right]\overline{\phi_T}\d s
\end{aligned}
\end{equation}
for all $\phi_T\in H^1(\Omega^H)$ with compact support, where
\begin{eqnarray*}
&&A_p(x)=|\det\nabla\Phi_p(x)|\left[(\nabla\Phi_p(x))^{-1}((\nabla\Phi_p(x))^{-1})^T\right]\in L^\infty(\Omega^H,\R^{2\times 2}),\\
&&c_p(x)=|\det\nabla\Phi_p(x)|\in L^\infty(\Omega^H).
\end{eqnarray*}

Let $w=\J_\Omega u_T$ then $w\in L^2(\Wast;\widetilde{H}^1_\alpha(\Omega^\Lambda_H))$, and satisfies the following variational problem for any $\phi\in L^2(\Wast;\widetilde{H}^1_\alpha(\Omega^\Lambda_H))$
\begin{equation}\label{eq:var_bloch}
\begin{aligned}
\int_\Wast a_\alpha(w(\alpha,\cdot),\phi(\alpha,\cdot))\d\alpha+b(\J^{-1}_\Omega w,\J^{-1}_\Omega \phi)=\int_\Wast\int_{\Gamma^\Lambda_H}f(\alpha,\cdot)\overline{\phi}(\alpha,\cdot)\d s\d\alpha,
\end{aligned}
\end{equation}
where 
\begin{eqnarray*}
&&\begin{aligned}
a_\alpha(w(\alpha,\cdot),\phi(\alpha,\cdot))&=\int_{\Omega^\Lambda_H}\left[\nabla w(\alpha,\cdot)\cdot\nabla\overline{\phi}(\alpha,\cdot)-k^2w(\alpha,\cdot)\overline{\phi}(\alpha,\cdot)\right]\d x\\&-\int_{\Gamma_H}T^+_\alpha(w(\alpha,\cdot)|_{\Gamma_H})\overline{\phi}(\alpha,\cdot)
\end{aligned}\\
&&b(\xi,\psi)=\left[\frac{\Lambda}{2\pi}\right]^{1/2}\int_{\Omega^\Lambda_H}(A_p-I_2)\nabla\xi\cdot\nabla\overline{\psi}\d x-k^2\left[\frac{\Lambda}{2\pi}\right]^{1/2}\int_{\Omega^\Lambda_H}(c_p-1)\xi\overline{\psi}\d x,\\
&&f(\alpha,\cdot)=\frac{\partial \J_\Omega u^i(\alpha,\cdot)}{\partial x_2}-T^+_\alpha\left[(\J_\Omega u^i)(\alpha,\cdot)|_{\Gamma^\Lambda_H}\right],
\end{eqnarray*}
with $T^+_\alpha$ defined by
\begin{equation}
T^+_\alpha(\psi)=\i\sum_{j\in\Z}\sqrt{k^2-|\Lambda^* j-\alpha|^2}\widehat{\psi}(j)e^{\i(\Lambda^*j-\alpha)x_1}\quad \psi=\sum_{j\in\Z}\widehat{\psi}(j)e^{\i(\Lambda^*j-\alpha)x_1}.
\end{equation}

\high{
\begin{remark}
The system is decoupled when the surface is purely periodic. In this case, $\Phi_p$ is the identity operator, thus $A_p=I_2$ and $c_p=1$. From the definition of the term $b$, it disappears in this case, thus the system has the form of
\begin{equation*}
\int_\Wast a_\alpha(w(\alpha,\cdot),\phi(\alpha,\cdot))\d\alpha=\int_\Wast\int_{\Gamma^\Lambda_H}f(\alpha,\cdot)\overline{\phi}(\alpha,\cdot)\d s\d\alpha.
\end{equation*}
\end{remark}
}

At the end of this subsection, we will list some of  the useful results in \cite{Lechl2017}.

\begin{theorem}\label{th:uni_solv}
Assume $u^i\in H^1_r(\Omega^H_p)$ for some $r\in[0,1)$, then $u_T\in \widetilde{H}^1_r(\Omega^H)$ satisfies \eqref{eq:var_origional} if and only if $w=\J_\Omega u_T\in H_0^r(\Wast;\widetilde{H}^1_\alpha(\Omega^\Lambda_H))$ satisfies \eqref{eq:var_bloch}. 
If $\Gamma_p$ is Lipschitz, then \eqref{eq:var_bloch} is uniquely solvable in $H^r_0(\Wast;\widetilde{H}^1_\alpha(\Omega^\Lambda_H))$ for all $u^i\in H^1_r(\Omega^H_p)$, $r\in[0,1)$.
\end{theorem}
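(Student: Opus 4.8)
\emph{Proof strategy.} The statement has two halves, and the plan is to handle them in turn: first prove that \eqref{eq:var_trans} and \eqref{eq:var_bloch} are equivalent under $\J_\Omega$, and then deduce unique solvability of \eqref{eq:var_bloch} from the unique solvability theorem for \eqref{eq:var_origional} quoted above (from \cite{Chand2010}), rather than re-proving solvability directly on the Bloch side.

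For the equivalence I would apply $\J_\Omega$ to \eqref{eq:var_trans} term by term, using the isomorphism property quoted above — which in particular maps $\widetilde H^1_r(\Omega^H)$ onto $H^r_0(\Wast;\widetilde H^1_\alpha(\Omega^\Lambda_H))$ and is an $L^2$-isometry onto $L^2(\Wast;L^2(\Omega^\Lambda_H))$. Since $\J_\Omega$ commutes with $\partial_{x_1}$ and $\partial_{x_2}$ (the lattice shift acts in $x_1$ only and the weights $e^{-\i\alpha\Lambda j}$ do not depend on $x$), Parseval's identity for $\J_\Omega$ turns the volume term $\int_{\Omega^H}[\grad u_T\cdot\grad\overline{\phi_T}-k^2u_T\overline{\phi_T}]$ into $\int_\Wast\int_{\Omega^\Lambda_H}[\grad w\cdot\grad\overline{\phi}-k^2w\overline{\phi}]$. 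The boundary term needs the transformation rule $\J_\Omega\circ T^+=T^+_\alpha\circ\J_\Omega$ on $\Gamma_H$, which I would obtain by comparing the Fourier-integral representation of $T^+$ with the Fourier-series representation of $T^+_\alpha$, using that the quasi-periodic Fourier coefficients of $\big(\J_\Omega\phi\big)(\alpha,\cdot,H)$ are, up to normalization, samples of the Fourier transform $\widehat{\phi}(\cdot,H)$ at the frequencies $\Lambda^*j-\alpha$, $j\in\Z$. Finally, because $A_p-I_2$ and $c_p-1$ are supported in the single cell $\Omega^\Lambda_H$, the two perturbation integrals reduce to integrals over $\Omega^\Lambda_H$ in which $u_T,\phi_T$ may be replaced, via the $j=0$ branch of the inversion formula, by $(\J^{-1}_\Omega w)|_{\Omega^\Lambda_H}$ and $(\J^{-1}_\Omega\phi)|_{\Omega^\Lambda_H}$; collecting them reproduces exactly the non-local coupling term $b(\J^{-1}_\Omega w,\J^{-1}_\Omega\phi)$. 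To move between the two test-function classes — compactly supported $\phi_T$ in \eqref{eq:var_trans} versus all of $L^2(\Wast;\widetilde H^1_\alpha(\Omega^\Lambda_H))$ in \eqref{eq:var_bloch} — I would use that both sides of \eqref{eq:var_bloch} depend continuously on $\phi$ and that $\{\J_\Omega\phi_T:\phi_T\in H^1(\Omega^H)\text{ compactly supported}\}$ is dense in $L^2(\Wast;\widetilde H^1_\alpha(\Omega^\Lambda_H))$ (this is in \cite{Lechl2016}).

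For unique solvability the key point is that $\Phi_p$ is a bi-Lipschitz diffeomorphism of $\Omega^H$ onto $\Omega_p^H$ that equals the identity outside the compact set $\overline{\Omega^\Lambda_H}$ and leaves the first coordinate untouched; hence $\phi\mapsto\phi\circ\Phi_p$ is an isomorphism of $H^1_r(\Omega_p^H)$ onto $H^1_r(\Omega^H)$ for every $r$, it preserves the weight $(1+|x_1|^2)^{r/2}$ exactly, and it maps $\widetilde H^1_r(\Omega_p^H)$ onto $\widetilde H^1_r(\Omega^H)$. By construction \eqref{eq:var_trans} is the pull-back of \eqref{eq:var_origional} under $\Phi_p$, so $u_T\in\widetilde H^1_r(\Omega^H)$ solves \eqref{eq:var_trans} if and only if $u:=u_T\circ\Phi_p^{-1}\in\widetilde H^1_r(\Omega_p^H)$ solves \eqref{eq:var_origional}. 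For $\Gamma_p$ Lipschitz and $u^i\in H^1_r(\Omega_p^H)$ with $r\in[0,1)$, the quoted theorem from \cite{Chand2010} provides a unique such $u$, hence a unique $u_T$, hence — combining the equivalence of the first half with the isomorphism property of $\J_\Omega$ — a unique $w=\J_\Omega u_T\in H^r_0(\Wast;\widetilde H^1_\alpha(\Omega^\Lambda_H))$ solving \eqref{eq:var_bloch}; existence follows by chasing the chain forward, uniqueness from injectivity of $\J^{-1}_\Omega$ together with uniqueness of $u$.

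The only genuinely nontrivial step is the equivalence, and within it the main obstacles are the DtN transformation rule $\J_\Omega T^+=T^+_\alpha\J_\Omega$, the density of Bloch transforms of compactly supported test functions, and the careful bookkeeping of normalization constants so that the perturbation integrals reassemble precisely into the form $b$ in \eqref{eq:var_bloch}. A direct proof of solvability on the Bloch side would face a different and in fact harder difficulty: the coupling term $b(\J^{-1}_\Omega w,\J^{-1}_\Omega\phi)$ is only a bounded — not a compact — perturbation of the fibre-wise G\r{a}rding form $\int_\Wast a_\alpha(\cdot,\cdot)\d\alpha$ (its gradient part gains no compactness from a Rellich embedding), so a naive Fredholm argument does not close; one would instead have to use that each fibre operator associated with $a_\alpha$ is boundedly invertible with inverse bounded uniformly in $\alpha$ on the compact interval $\Wast$ (using continuity in $\alpha$, including merely H\"older-$1/2$ continuity of $T^+_\alpha$ at the Rayleigh values $|\Lambda^*j-\alpha|=k$) and then analyze the perturbed operator relative to that. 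Reducing instead to the already-established theorem for \eqref{eq:var_origional} sidesteps this entirely, which is why I would take that route.
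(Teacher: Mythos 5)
Your proposal is correct and follows essentially the route of the cited source: the paper itself states Theorem \ref{th:uni_solv} without proof (it is quoted from \cite{Lechl2017}), and there the equivalence is obtained exactly as you describe, by applying $\J_\Omega$ term by term (Parseval for the volume terms, the commutation rule $\J_\Omega T^+=T^+_\alpha\J_\Omega$ for the boundary term, and the single-cell support of $A_p-I_2$, $c_p-1$ for the coupling term $b$), with unique solvability then inherited from the result of \cite{Chand2010} through the chain $u\leftrightarrow u_T\leftrightarrow w$ rather than by a direct Fredholm argument on the Bloch side. No gaps.
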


\begin{theorem}\label{th:higherregu}
If $u^i\in H^2_r(\Omega^H_p)$ for $r\in[0,1)$ and $\zeta,\zeta_p\in C^{2,1}(\R)$， then $w(\alpha,\cdot)\in \widetilde{H}^2_\alpha(\Omega^\Lambda_H)$ and $u_T=\J^{-1}_\Omega w\in \widetilde{H}^2(\Omega^H)$.
\end{theorem}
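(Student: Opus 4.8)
The plan is to treat this as a higher-regularity ($H^2$) upgrade of Theorem~\ref{th:uni_solv} obtained by elliptic regularity, organised around the spatial variable. The decisive observation is that the Bloch-transformed equation \eqref{eq:var_bloch} couples the fibres through the term $b(\J^{-1}_\Omega w,\J^{-1}_\Omega\phi)$, whereas the equivalent problem \eqref{eq:var_trans} for $u_T$ does not; so I would first prove $u_T\in\widetilde{H}^2(\Omega^H)$ from \eqref{eq:var_trans} and then transfer to $w$ by the isomorphism property of $\J_\Omega$ between $H^s_r(\Omega)$ and $H_0^r(\Wast;H^s_\alpha(\Omega^\Lambda_H))$ recalled above, in the case $s=2$, $r=0$. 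Here \eqref{eq:var_trans} is read as the weak form of $\div(A_p\nabla u_T)+k^2c_pu_T=0$ in $\Omega^H$, together with $u_T=0$ on $\Gamma$ and the impedance-type condition $\partial_{x_2}u_T-T^+(u_T|_{\Gamma_H})=\partial_{x_2}u^i-T^+(u^i|_{\Gamma_H})$ on $\Gamma_H$, where we used $A_p=I_2$ on $\Gamma_H$.

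First I would record the regularity thresholds that the hypotheses supply. From $\zeta,\zeta_p\in C^{2,1}(\R)$ and the explicit formula for $\Phi_p$ — whose second component is a polynomial in $x_2$ with $C^{2,1}$ coefficients in $x_1$, and $\zeta-H$ stays bounded away from $0$ — one gets $\Phi_p\in C^{2,1}(\Omega^H)$, hence $A_p,c_p\in C^{1,1}(\Omega^H)$, and the graph $\Gamma$ is of class $C^{2,1}$; these are exactly the smoothness assumptions under which $H^2$ elliptic regularity holds. On the artificial boundary, $u^i\in H^2_r(\Omega^H_p)$ gives $u^i|_{\Gamma_H}\in H^{3/2}_r(\Gamma_H)$ and $\partial_{x_2}u^i|_{\Gamma_H}\in H^{1/2}_r(\Gamma_H)$ by the trace theorem, and, since $T^+$ is an order-one Fourier multiplier bounded on the weighted scale for $|r|<1$, the mapping stated in the excerpt improves to $T^+\colon H^{3/2}_r(\Gamma_H)\to H^{1/2}_r(\Gamma_H)$; hence the right-hand side $\partial_{x_2}u^i-T^+(u^i|_{\Gamma_H})$ lies in $H^{1/2}_r(\Gamma_H)$.

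The core is then a three-fold localisation. (i) \emph{Interior}: away from the boundaries the equation has Lipschitz principal coefficient $A_p$ and vanishing right-hand side, so $u_T\in H^2_{\loc}(\Omega^H)$. (ii) \emph{Near $\Gamma$}: flatten the $C^{1,1}$ boundary (preserving the Lipschitz coefficients and the $L^2$ right-hand side) and combine the difference-quotient method with the homogeneous Dirichlet condition to get $H^2$ regularity up to $\Gamma$. (iii) \emph{Near $\Gamma_H$}: the boundary is flat and the coefficients are $I_2,1$ on it and Lipschitz nearby, while $\partial_{x_2}-T^+$ is a first-order classical pseudodifferential boundary operator for $\Delta+k^2$ satisfying the Lopatinskii--Shapiro covering condition (equivalently, one may use the sign of $\Im\langle T^+\phi,\phi\rangle$ and the explicit half-plane Poisson/DtN operator); this lets the $H^{1/2}$ boundary datum propagate to $H^2$ regularity near $\Gamma_H$. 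Since $A_p,c_p$ are $\Lambda$-periodic outside the compactly supported perturbation and $\Gamma$ is $\Lambda$-periodic, the constants in (i)--(iii) are uniform in the base point $x_1$; superposing these estimates against the global bound $u_T\in\widetilde{H}^1_r(\Omega^H)\subset\widetilde{H}^1(\Omega^H)$ from Theorem~\ref{th:uni_solv} yields $u_T\in\widetilde{H}^2(\Omega^H)$. Applying the isomorphism property of $\J_\Omega$ with $s=2$, $r=0$ then gives $w=\J_\Omega u_T\in L^2(\Wast;\widetilde{H}^2_\alpha(\Omega^\Lambda_H))$, i.e.\ $w(\alpha,\cdot)\in\widetilde{H}^2_\alpha(\Omega^\Lambda_H)$ for a.e.\ $\alpha$ with square-integrable norm, which is the assertion.

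I expect step (iii) to be the genuine obstacle: the operator $T^+$ is nonlocal, so one cannot merely localise, and a crude bootstrap using only $T^+\colon H^{1/2}\to H^{-1/2}$ loses precisely the derivative one wants to gain — the elliptic (covering-condition) character of the impedance condition, or an argument importing regularity from the upward-propagating integral representation of $u^s$ above $\Gamma_H$, has to be used in earnest. A lesser, bookkeeping-type point is the fibre coupling in \eqref{eq:var_bloch}; routing the proof through \eqref{eq:var_trans} avoids it, but one could also close a fibrewise argument a posteriori, because once $u_T\in H^2_{\loc}$ is known the term $b$ acts on each Bloch fibre as an honest $L^2$ source (after integrating $\div((A_p-I_2)\nabla u_T)$ by parts) and fibrewise $H^2$ regularity for the $\alpha$-quasi-periodic problem applies.
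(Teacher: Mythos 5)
The paper offers no proof of this statement: it appears in a list of results imported verbatim from \cite{Lechl2017}, so there is no in-paper argument to match yours against. Judged on its own terms, your plan is a legitimate and essentially standard elliptic-regularity route, and the two points you flag as delicate are the right ones. Two comparative remarks. First, the argument in the cited source is organised fibrewise: for each fixed $\alpha$ one proves $H^2$ regularity of the $\alpha$-quasi-periodic cell problem on the \emph{bounded} domain $\Omega^\Lambda_H$ (with the coupling term $b$ absorbed as an additional $L^2$ source, exactly as in your closing remark), with constants uniform in $\alpha$, and then integrates over $\Wast$; this avoids both the uniformity-in-$x_1$ bookkeeping on the unbounded strip and the ``a.e.\ $\alpha$'' caveat your global route leaves you with --- to upgrade ``a.e.''\ to ``every $\alpha$'', as the statement $w(\alpha,\cdot)\in\widetilde{H}^2_\alpha(\Omega^\Lambda_H)$ is presumably meant, you would additionally need continuity of $\alpha\mapsto w(\alpha,\cdot)$, which Theorem \ref{th:continuous} only supplies for $r>1/2$. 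Second, for your step (iii) I would commit to the representation-formula argument rather than the Lopatinskii--Shapiro one: since $H>\sup\zeta_p$, place an auxiliary line at height $H'$ with $\sup\zeta_p<H'<H$, note that the upward-propagating representation of $u^s$ from height $H'$ makes $u^s$ smooth in the open half-plane $x_2>H'$ (the multiplier $e^{\i\sqrt{k^2-\xi^2}\,(x_2-H')}$ decays like $e^{-|\xi|(x_2-H')}$ for large $|\xi|$), and conclude $u=u^i+u^s\in H^2$ in a neighbourhood of $\Gamma_H$ directly from $u^i\in H^2_r$; this sidesteps the nonlocality of $T^+$ entirely and is the cleanest way to close the one genuinely sketchy step. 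With that substitution the proposal closes; the remaining items (smoothness of $A_p,c_p$ from $\zeta,\zeta_p\in C^{2,1}$, the transfer to $w$ via the $s=2$, $r=0$ isomorphism) are correct as written.
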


\begin{theorem}\label{th:continuous}
If $\Gamma_p$ is Lipschitz continuous and $u^i\in H^1_r(\Omega^H_p)$ for $r\in(1/2,1)$, then $w\in L^2(\Wast;\widetilde{H}^1_\alpha(\Omega^\Lambda_H))$ equivalently satisfies for all $\alpha\in\Wast$ and $\phi_\alpha\in \widetilde{H}^1_\alpha(\Omega^\Lambda_H)$ that 
\begin{equation}
a_\alpha(w(\alpha,\cdot),\phi_\alpha)+b(\J^{-1}_\Omega w,\phi_\alpha)=\int_{\Gamma^\Lambda_H}f(\alpha,\cdot)\overline{\phi_\alpha}\d s,
\end{equation}
\end{theorem}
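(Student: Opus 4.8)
The plan is to reduce the integrated variational problem \eqref{eq:var_bloch} to a pointwise-in-$\alpha$ identity by a vector-valued version of the fundamental lemma of the calculus of variations. The only term in \eqref{eq:var_bloch} that couples different values of the Bloch parameter $\alpha$ is $b(\J^{-1}_\Omega w,\J^{-1}_\Omega\phi)$, so the first step is to unfold it. Since $\mathrm{supp}(A_p-I_2)$ and $\mathrm{supp}(c_p-1)$ are contained in the single reference cell $\Omega^\Lambda_H$ and, by the inversion formula, the restriction of $\J^{-1}_\Omega\phi$ to $\Omega^\Lambda_H$ is a fixed multiple of $\int_\Wast\phi(\alpha,\cdot)\d\alpha$, the sesquilinearity and boundedness of $b$, together with the fact that $\int_\Wast\phi(\alpha,\cdot)\d\alpha$ is a Bochner integral in $H^1(\Omega^\Lambda_H)$, will give
\[
b\big(\J^{-1}_\Omega w,\J^{-1}_\Omega\phi\big)=\int_\Wast b\big(\J^{-1}_\Omega w,\phi(\alpha,\cdot)\big)\d\alpha
\]
(up to the explicit normalisation constants, whose exact values are irrelevant for the equivalence). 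Hence \eqref{eq:var_bloch} is equivalent to $\int_\Wast R(\alpha)[\phi(\alpha,\cdot)]\d\alpha=0$ for all $\phi\in L^2(\Wast;\widetilde{H}^1_\alpha(\Omega^\Lambda_H))$, where $R(\alpha)\in\big(\widetilde{H}^1_\alpha(\Omega^\Lambda_H)\big)'$ is the residual $R(\alpha)[\psi]:=a_\alpha(w(\alpha,\cdot),\psi)+b(\J^{-1}_\Omega w,\psi)-\int_{\Gamma^\Lambda_H}f(\alpha,\cdot)\overline{\psi}\d s$, and the claimed pointwise identity is exactly $R(\alpha)=0$ for all $\alpha$. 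The direction ``$R\equiv 0\Rightarrow$\eqref{eq:var_bloch}'' is then immediate: evaluate $R(\alpha)$ at $\psi=\phi(\alpha,\cdot)$ for a.e. $\alpha$ and integrate, noting that $\alpha\mapsto R(\alpha)[\phi(\alpha,\cdot)]$ is integrable because $w,\phi\in L^2(\Wast;H^1(\Omega^\Lambda_H))$ and $a_\alpha,T^+_\alpha$ are bounded uniformly for $\alpha\in\overline{\Wast}$.

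For the converse I would run a vector-valued du Bois-Reymond argument. To handle the $\alpha$-dependence of $\widetilde{H}^1_\alpha(\Omega^\Lambda_H)$, trivialise it by the multiplication isomorphism $M_\alpha\colon\widetilde{H}^1_0(\Omega^\Lambda_H)\to\widetilde{H}^1_\alpha(\Omega^\Lambda_H)$, $(M_\alpha\psi)(x)=e^{-\i\alpha x_1}\psi(x)$, which is a real-analytic family of isomorphisms preserving the boundary condition; then finite linear combinations of the test functions $\phi(\alpha,x)=g(\alpha)(M_\alpha\psi)(x)$, with $g\in L^2(\Wast)$ and $\psi\in\widetilde{H}^1_0(\Omega^\Lambda_H)$, are dense in $L^2(\Wast;\widetilde{H}^1_\alpha(\Omega^\Lambda_H))$. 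Inserting such a test function yields $\int_\Wast\overline{g(\alpha)}\,R(\alpha)[M_\alpha\psi]\d\alpha=0$ for every $g\in L^2(\Wast)$, hence $R(\alpha)[M_\alpha\psi]=0$ for a.e. $\alpha$, with the null set depending on $\psi$. Choosing a countable dense subset $\{\psi_n\}\subset\widetilde{H}^1_0(\Omega^\Lambda_H)$ and taking the union of the corresponding null sets produces a full-measure set $\mathcal N\subset\Wast$ with $R(\alpha)[M_\alpha\psi_n]=0$ for all $n$ and all $\alpha\in\mathcal N$, and continuity of $\psi\mapsto R(\alpha)[M_\alpha\psi]$ then gives $R(\alpha)=0$ for every $\alpha\in\mathcal N$.

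It remains to upgrade ``a.e. $\alpha$'' to ``all $\alpha$'', and this is where the regularity assumption $r\in(1/2,1)$ is essential. I would prove that for each fixed $\psi$ the scalar map $\alpha\mapsto R(\alpha)[M_\alpha\psi]$ is continuous on $\overline{\Wast}$; since it vanishes on the dense set $\mathcal N$, it vanishes identically, and therefore $R\equiv 0$ on $\Wast$, which is the assertion. Continuity is verified term by term. First, $\alpha\mapsto M_\alpha\psi\in H^1(\Omega^\Lambda_H)$ is smooth and $\J^{-1}_\Omega w$ is a fixed element, so $\alpha\mapsto b(\J^{-1}_\Omega w,M_\alpha\psi)$ is continuous. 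Secondly, since $u^i\in H^1_r(\Omega^H_p)$ with $r>1/2$, Theorem~\ref{th:uni_solv} gives $w\in H^r_0(\Wast;\widetilde{H}^1_\alpha(\Omega^\Lambda_H))$, and because the $H^r$-regularity in $\alpha$ is measured in the fixed space $H^1(\Omega^\Lambda_H)$, the embedding $H^r(\Wast)\hookrightarrow C(\overline{\Wast})$ for $r>1/2$ shows $\alpha\mapsto w(\alpha,\cdot)\in H^1(\Omega^\Lambda_H)$ is continuous; the Bloch transform of $u^i$ and the data $f(\alpha,\cdot)$ depend continuously on $\alpha$ for the same reason. Thirdly, the quasi-periodic form $\alpha\mapsto a_\alpha$ is continuous; the delicate point is the symbol $\sqrt{k^2-|\Lambda^* j-\alpha|^2}$ of $T^+_\alpha$, which is continuous in $\alpha$ even across the Rayleigh frequencies $k^2=|\Lambda^* j-\alpha|^2$ (there the branch with $\Re\sqrt{\cdot}\ge 0$ changes from real to purely imaginary, but both one-sided limits equal $0$), and since for $\alpha$ in the bounded set $\overline{\Wast}$ only finitely many $j$ are near anomalous while the remaining terms obey uniform tail bounds, $\alpha\mapsto T^+_\alpha$ is continuous in the relevant operator norm after the same trivialisation of the trace spaces on $\Gamma^\Lambda_H$. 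Combining the three facts gives the continuity and finishes the proof.

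I expect the main obstacle to be this last step, the passage from ``a.e. $\alpha$'' to ``all $\alpha$''. It is not a formality: $w$ is only $H^r$-regular in $\alpha$ — the square-root singularities across Rayleigh frequencies, studied in the later sections, are precisely what prevents more regularity — so continuity of $\alpha\mapsto w(\alpha,\cdot)$, and hence the pointwise identity for \emph{every} $\alpha$, is available only when $r>1/2$; together with the constraint $|r|<1$ needed for $T^+$ to map between the weighted trace spaces this forces $r\in(1/2,1)$. A secondary technical burden is the clean bookkeeping in the unfolding of the coupling term $b$ and the systematic use of the trivialisation $M_\alpha$, without which neither the du Bois-Reymond step nor the continuity argument can be stated rigorously.
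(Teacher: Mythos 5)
The paper itself contains no proof of this statement: Theorem~\ref{th:continuous} is listed among the ``useful results in [Lechl2017]'' and is quoted without argument, so there is no internal proof to compare yours against. Your proposal is nevertheless correct and is essentially the argument of the cited reference: unfold the coupling term $b$ using the fact that $\J^{-1}_\Omega\phi$ restricted to the reference cell is a fixed multiple of $\int_\Wast\phi(\alpha,\cdot)\d\alpha$, apply a du Bois--Reymond/separation-of-variables step (after trivialising $\widetilde{H}^1_\alpha$ by $M_\alpha$) to obtain the identity for a.e.\ $\alpha$, and then use the embedding $H^r_0(\Wast;X)\hookrightarrow C(\overline{\Wast};X)$ for $r>1/2$ to upgrade to every $\alpha$. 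You have also correctly isolated the two genuinely delicate points --- the continuity of $\alpha\mapsto T^+_\alpha$ across the Wood anomalies and the normalisation constants arising when $b(\J^{-1}_\Omega w,\J^{-1}_\Omega\phi)$ is unfolded (the latter is a bookkeeping issue already built into the $[\Lambda/(2\pi)]^{1/2}$ factor in the definition of $b$) --- so I see no gap.
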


\subsection{Numerical method}

Assume that $\M_h$ is a family of regular and quasi-uniform meshes \high{(see \cite{Brenn1994})} of $\Omega^\Lambda_H$, such that the width of each mesh is not larger than $h$. Suppose $\{\phi^{(l)}_M\}_{l=1}^M$ is the piecewise linear nodal \high{function} that equals to one at the $l$-th nodal $x^{(\ell)}=\left(x_1^{(\ell)},x_2^{(\ell)}\right)^\top$ while equals to zero \high{at other nodal points}. Then the space spanned by $\{\phi^{(l)}_M\}_{l=1}^M$ is a subset of $H^1_0(\Omega^\Lambda_H)$. 

For the domain $\Wast$, define the uniformly located grid points 
\begin{equation*}
\alpha_N^{(j)}=-\frac{\pi}{\Lambda}+\frac{2\pi j}{N\Lambda},\quad j=1,2,\dots,N,
\end{equation*}
and $\{\psi_N^{(j)}\}_{j=1}^N$ be the basis of the functions that are piecewise constant on $[\alpha^{(j)}_N-\pi/(N\Lambda),\alpha^{(j)}_N+\pi/(N\Lambda)]$. The finite element space $X_{N,h}$ is formulated by
\begin{equation*}
X_{N,h}=\left\{v_{N,h}(\alpha,x)=\sum_{j=1}^N\sum_{l=1}^M v^{(j,l)}_{N,h}\exp\left(-\i\alpha_N^{(j)}x_1\right) \psi^{(j)}_N(\alpha)\phi^{(l)}_M(x):\,v^{(j,l)}_{N,h}\in\C\right\}.
\end{equation*}
Then the numerical scheme is to seek for a finite element solution $w_{N,h}\in X_{N,h}$ such that 
\begin{equation*}
\int_\Wast a_\alpha(w_{N,h},\phi_{N,h})\d\alpha+b(\J^{-1}_\Omega w_{N,h},\overline{\J^{-1}_\Omega \phi_{N,h}})=\int_\Wast\int_{\Gamma^\Lambda_H}f(\alpha,\cdot)\overline{\phi}_{N,h}\d s\d\alpha
\end{equation*}
for all $\phi_{N,h}\in X_{N,h}$. The unique solvability and  convergence of the finite element scheme of the variational form \eqref{eq:var_bloch}.

\begin{theorem}\label{th:numeric_old}
Assume that $u^i\in H^2_r(\Omega^H_p)$ for $r\geq 1/2$, $\zeta,\zeta_p\in C^{2,1}(\R)$, then is uniquely solvable in $X_{N,h}$. The error between the numerical solution $w_{N,h}$ and exact one $w$ is bounded be
\begin{equation}
\|w_{N,h}-w\|_{L^2(\Wast;H^l_\alpha(\Omega^\Lambda_H))}\leq C h^{1-l}(N^{-r}+h)\|f\|_{H^r_0(\Wast;H^{1/2}_\alpha(\Gamma^\Lambda_H))}.
\end{equation}
\end{theorem}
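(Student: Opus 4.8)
The plan is to view the scheme as a conforming Galerkin discretization of \eqref{eq:var_bloch} in $\mathcal{V}:=L^2(\Wast;\widetilde{H}^1_\alpha(\Omega^\Lambda_H))$ with trial and test space $X_{N,h}\subset\mathcal{V}$, and to run the usual three steps for a non-coercive but well-posed variational problem: a discrete inf--sup condition for $h$ small and $N$ large (hence unique solvability and quasi-optimality), a best-approximation estimate in $X_{N,h}$ assembled from the regularity of $w$ in the two variables separately, and an Aubin--Nitsche duality step to pass from $l=1$ to $l=0$; a final a priori bound then replaces $\|w\|$ by $\|f\|$.

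For the stability step, let $B(\cdot,\cdot)$ be the sesquilinear form of \eqref{eq:var_bloch} and $\mathcal{C}$ its principal part, consisting of the $\int_\Wast\int_{\Omega^\Lambda_H}\nabla w\cdot\nabla\overline{\phi}$ contribution together with $-\Re\langle T^+_\alpha\cdot,\cdot\rangle$, which is nonnegative since for all but finitely many modes $\sqrt{k^2-|\Lambda^* j-\alpha|^2}$ is purely imaginary with positive imaginary part. As every $w(\alpha,\cdot)\in\widetilde{H}^1_\alpha(\Omega^\Lambda_H)$ vanishes on $\Gamma^\Lambda$, a Poincar\'e--Friedrichs inequality on the bounded cell $\Omega^\Lambda_H$ gives $\Re\mathcal{C}(w,w)\ge c\|w\|_{\mathcal{V}}^2$, so $\mathcal{C}$ is coercive; the remainder $B-\mathcal{C}$ (the mass terms, the finite-rank radiation part of the DtN, and the coupling $b(\J^{-1}_\Omega\cdot,\J^{-1}_\Omega\cdot)$) is bounded, and by Theorem \ref{th:uni_solv} the operator of $B$ is injective. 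One has to be careful here: a naive Fredholm/Schatz argument on all of $\mathcal{V}$ does not apply, because the mass operator is not compact in the $\alpha$-direction; instead I would argue fiberwise, using the equivalent formulation of Theorem \ref{th:continuous} (each $\alpha$-fiber is a compact perturbation of a coercive problem on the bounded domain $\Omega^\Lambda_H$, and is boundedly invertible uniformly in $\alpha$ by Theorems \ref{th:uni_solv}/\ref{th:higherregu}), while treating $b$ as a perturbation controlled by the compact support of $A_p-I_2$, $c_p-1$ and the boundedness of $\J^{-1}_\Omega$ into $H^1$. A Schatz-type duality argument then yields the discrete inf--sup condition on $X_{N,h}$ once $(N,h)$ is fine enough that the best approximation of the relevant primal and adjoint solutions tends to zero, whence unique solvability of the discrete system and $\|w-w_{N,h}\|_{\mathcal{V}}\le C\inf_{v\in X_{N,h}}\|w-v\|_{\mathcal{V}}$.

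For the approximation step, write $w(\alpha,x)=e^{-\i\alpha x_1}W(\alpha,x)$ with $W$ $\Lambda$-periodic in $x_1$, so that the twisted ansatz functions $\exp(-\i\alpha_N^{(j)}x_1)\psi_N^{(j)}(\alpha)\phi_M^{(l)}(x)$ are the natural choice. Under the present hypotheses $u^i\in H^2_r$, $\zeta,\zeta_p\in C^{2,1}(\R)$, Theorem \ref{th:higherregu} provides $w(\alpha,\cdot)\in\widetilde{H}^2_\alpha(\Omega^\Lambda_H)$ uniformly in $\alpha$, while Theorem \ref{th:uni_solv} provides $w\in H^r_0(\Wast;\widetilde{H}^1_\alpha(\Omega^\Lambda_H))$ for $r\in[1/2,1)$; it is exactly this $\alpha$-regularity, limited by the square-root singularities, that caps the rate at $N^{-r}$. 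Piecewise-constant interpolation in $\alpha$ on the grid $\{\alpha_N^{(j)}\}$ then contributes $O(N^{-r})$ in $L^2(\Wast;\cdot)$ and nodal interpolation on $\M_h$ contributes $O(h^{2-l})$ in $H^l_\alpha$ for each $\alpha$; combining the two one-dimensional estimates in tensor-product fashion gives $\inf_{v\in X_{N,h}}\|w-v\|_{L^2(\Wast;H^1_\alpha(\Omega^\Lambda_H))}\le C(N^{-r}+h)\|w\|_{H^r_0(\Wast;\widetilde{H}^2_\alpha)}$, which with quasi-optimality settles $l=1$. The case $l=0$ follows from an Aubin--Nitsche argument in which, because the coupling $b$ is nonlocal in $\alpha$ and prevents isolating the $x$-part of the error cleanly, one gains a single power of $h$ on the whole energy-norm error, producing the prefactor $h^{1-l}$ in front of $N^{-r}+h$. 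Finally the a priori bound $\|w\|_{H^r_0(\Wast;\widetilde{H}^2_\alpha)}\le C\|f\|_{H^r_0(\Wast;H^{1/2}_\alpha(\Gamma^\Lambda_H))}$, from the well-posedness of Theorem \ref{th:uni_solv} together with the regularity shift of Theorem \ref{th:higherregu}, closes the argument.

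The main obstacle, I expect, is the discrete inf--sup condition: the nonlocal term $b(\J^{-1}_\Omega\cdot,\J^{-1}_\Omega\cdot)$ genuinely couples all Bloch modes, so stability cannot be established mode by mode and must be combined with the fiberwise well-posedness in a way that is uniform in $\alpha$, and one has to verify that the adjoint problem inherits the same $H^2$-in-$x$ regularity as the primal one so that both the Schatz step and the Aubin--Nitsche step go through. A secondary, purely technical point is the exponent bookkeeping in the tensor-product and duality estimates — in particular checking that measuring the error in $H^l_\alpha$ instead of $L^2$ costs exactly the stated power of $h$ and does not spoil the $N^{-r}$ rate.
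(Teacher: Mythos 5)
First, a point of order: the paper you are reading does not prove Theorem~\ref{th:numeric_old} at all --- it is quoted verbatim from \cite{Lechl2017} as background for the method that the paper then sets out to improve, so there is no in-paper proof to compare against. Judged on its own terms, your outline follows the same route as the cited reference: Galerkin quasi-optimality from a discrete inf--sup condition, a tensor-product best-approximation estimate combining the $O(N^{-r})$ rate of piecewise-constant interpolation of an $H^r_0(\Wast;\cdot)$ function with the $O(h^{2-l})$ nodal FEM rate in $x$, and an Aubin--Nitsche step for the factor $h^{1-l}$. You also correctly identify the two real difficulties: the mass term is \emph{not} compact on $L^2(\Wast;\widetilde H^1_\alpha(\Omega^\Lambda_H))$ (no smoothing in $\alpha$), so a global G\r{a}rding--Schatz argument is unavailable, and the remedy is indeed fiberwise stability of $a_\alpha$, uniform over the compact $\overline{\Wast}$ (uniformity survives the Wood anomalies because the solution operator remains continuous in $\alpha$ there).

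Two caveats. First, your phrase ``treating $b$ as a perturbation controlled by the compact support'' is the thinnest spot: $b$ is not small (the perturbation $\zeta_p-\zeta$ is arbitrary), so it cannot be absorbed perturbatively. What actually saves the argument is that $b(\J^{-1}_\Omega\cdot,\J^{-1}_\Omega\cdot)$ factors through the single function $\J^{-1}_\Omega w\big|_{\Omega^\Lambda_H}=\bigl[\tfrac{\Lambda}{2\pi}\bigr]^{1/2}\int_\Wast w(\alpha,\cdot)\,\mathrm{d}\alpha$, i.e.\ it is rank one in the $\alpha$-variable; this is exactly what the arrow structure of the linear system \eqref{eq:linearsystem} encodes, and it is this structure, combined with injectivity of the full continuous operator (Theorem~\ref{th:uni_solv}), that yields the discrete inf--sup condition for $N$ large and $h$ small. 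Second, $X_{N,h}$ is not literally a subspace of $L^2(\Wast;\widetilde H^1_\alpha(\Omega^\Lambda_H))$: on the $j$-th $\alpha$-subinterval the twisted basis functions are $\alpha_N^{(j)}$-quasi-periodic rather than $\alpha$-quasi-periodic, so the scheme is mildly non-conforming and the analysis must be carried out for the periodized unknown $e^{\i\alpha x_1}w(\alpha,x)$ (as the reference does) or the consistency error of the quasi-periodicity mismatch must be accounted for. Neither point invalidates your plan, but both must be made precise for the proof to close.
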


\high{
\begin{remark}
For the special case that the surface is non-perturbed, the same error estimate of the numerical result is also obtained. Moreover, when $u^i$ satisfies that $\J_\Omega u^i\in W_p^{1,p}(\Wast;H_\alpha^1(\Omega^\Lambda_H))$ for some $p\in[1,2)$, then 
\begin{equation*}
\|w_{N,h}-w\|_{L^2(\Wast;H_\alpha^\ell(\Omega^\Lambda_H))}\leq C(h^{2-\ell}+N^{-1})\|f\|_{L^2(\Wast;H^{1/2}_\alpha(\Gamma^\Lambda_H))}.
\end{equation*}
The space $W_p^{1,p}(\Wast;H_\alpha^1(\Omega^\Lambda_H))$ is defined by
\begin{equation*}
\|w\|^p_{W_p^{1,p}(\Wast;H_\alpha^1(\Omega^\Lambda_H))}=\int_\Wast\left[\|w(\alpha,\cdot)\|^p_{H^1_\alpha(\Omega^\Lambda_H)}+\left\|\frac{\partial}{\partial\alpha}w(\alpha,\cdot)\right\|^p_{H^1_\alpha(\Omega^\Lambda_H)}\right]\d\alpha.
\end{equation*}
For details see \cite{Lechl2016b}.
\end{remark}
}

We will introduce the numerical method in \cite{Lechl2016b} briefly in the rest of this section. Let $w^s(\alpha,x)=w(\alpha,x)-\left(\J_\Omega u^i\right)(\alpha,x)$ and $w_0(\alpha,x)=e^{\i\alpha x_1}w^s(\alpha,x)$, then $w_0\in L^2(\Wast;H_0^1(\Omega^\Lambda_H)))$.

Recall that for the choice of $X_{N,h}$, piecewise constant basis functions are chosen for $\alpha$ in $\Wast$, we can define the  discrete space as follows. Still denote by $\left\{\phi_M^{(\ell)}\right\}_{\ell=1}^M$ the piecewise linear nodal functions in $\Omega^\Lambda_H$, and $\left\{\psi_N^{(\ell)}\right\}_{\ell=1}^N$ the piecewise constant basic function in $\Wast$. Suppose there is a positive integer $M'<M$ such that $\phi_\ell\big|_{\Gamma^\Lambda}=0$ for any $\ell=1,\dots,M'$ and $\phi_\ell\big|_{\Gamma^\Lambda}\neq0$ for any $\ell=M'+1,\dots,M.$. Thus  ${\rm span}\left\{\phi_{M'}^{(\ell)}\right\}_{\ell=1}^{M'}\subset\widetilde{H}_0^1(\Omega^\Lambda_H)$. 

Define the discrete space by
\begin{equation*}
Y^{(j)}_{N,h}=\left\{v^{(j)}_{N,h}=\sum_{\ell=1}^{M}v_{N,h}^{(j,l)}\psi_N^{(j)}(\alpha)\phi_M^{(l)}(x):\,v_{N,h}^{(j,l)}\in\C\right\}\subset L^2(\Wast;H_0^1(\Omega^\Lambda_H))
\end{equation*}
for $j=1,\dots,N$ and also the subspace of functions that vanish on $\Gamma^\Lambda$
\begin{equation*}
\widetilde{Y}^{(j)}_{N,h}=\left\{v^{(j)}_{N,h}=\sum_{\ell=1}^{M'}v_{N,h}^{(j,l)}\psi_N^{(j)}(\alpha)\phi_M^{(l)}(x):\,v_{N,h}^{(j,l)}\in\C\right\}\subset L^2(\Wast;\widetilde{H}_0^1(\Omega^\Lambda_H)).
\end{equation*}
Then setting
\begin{equation*}
Y^0_{N,h}=Y^{(1)}_{N,h}\oplus\cdots\oplus Y^{(N)}_{N,h},\quad \widetilde{Y}^0_{N,h}=\widetilde{Y}^{(1)}_{N,h}\oplus\cdots\oplus \widetilde{Y}^{(N)}_{N,h},
\end{equation*}
then for any $w_0\in X_{N,h}$, there is a unique vector $\left(w_0^{(j)}\right)_{j=1}^N\in Y^0_{N,h}$ such that $w_0=\sum_{j=1}^N \exp\left(-\i\alpha_N^{(j)}x_1\right) w^{(j)}_0$.  Then the inverse Bloch transform $\J^{-1}_\Omega$ applied to the function $w_0$ equals to the numerical inverse Bloch transform $\J^{-1}_{\Omega,N}$ applied to $\left\{e^{-\i\alpha_n^{(j)}x_1} w_0^{(j)}\left(\alpha_N^{(j)},x\right)\right\}_{j=1}^N$
\begin{equation*}
\J_\Omega^{-1} w_0=\J_{\Omega,N}^{-1}\left(\left\{e^{-\i\alpha_n^{(j)}x_1} w_0^{(j)}\left(\alpha_N^{(j)},x\right)\right\}_{j=1}^N\right)=\frac{\Lambda^*}{N}\sum_{j=1}^N e^{-\i\alpha_N^{(j)}x_1} w_0^{(j)}\left(\alpha_N^{(j)},x\right).
\end{equation*}

Suppose $w_0=\sum_{j=1}^N \exp\left(-\i\alpha_N^{(j)}x_1\right) w^{(j)}_0$ where $w_0^{(j)}=\sum_{m=1}^M w_{N,h}^{(j,m)}\psi_N^{(j)}\phi_M^{(m)}$, omitting the details in \cite{Lechl2017}, the final linear system is set up as follows.
\begin{equation}\label{eq:linearsystem}
\left(\begin{matrix}
A_1 & 0 & \cdots & 0 & C_1\\
0   & A_2 & \cdots & 0 & C_2\\
\vdots & \vdots & \vdots & \vdots & \vdots\\
0 & 0& \cdot & A_N & C_N\\
B_1 & B_2 & \cdot& B_N & I_M
\end{matrix}
\right)\left(\begin{matrix}
W_1\\ W_2\\ \vdots \\ W_N\\ U
\end{matrix}
\right)=\left(
\begin{matrix}
F_1\\ F_2\\ \vdots\\ F_N\\ 0
\end{matrix}
\right),
\end{equation}
where $A_j,\,B_j,\,C_j,\,I_m$ are $M\times M$ matrices and $F_j$ are $M\times 1$ vertices, $W_j=\left(w_{N,h}^{(j,1)},\dots,w_{N,h}^{(j,m)}\right)^\top$ and $U=\left(u_h^{(1)},\dots,u_h^{(M)}\right)^\top$, where
\begin{equation*}
u_h^{(\ell)}=\frac{\Lambda^*}{N}\sum_{j=1}^N e^{-\i \alpha_N^{(j)}x_1^{(\ell)}}w_{N,h}^{(j,\ell)}.
\end{equation*}

The matrix linear system \eqref{eq:linearsystem} is a $M(N+1)\times M(N+1)$-sparse matrix with $3N+1$ non-zero $M\times M$ blocks. In \cite{Lechl2017},  a GMRES iteration method with an incomplete LU-decomposition based pre-conditioner was applied to the solution of the linear system. The convergent rate with respect to $N$ is $N^{-r}$ where $1/2\leq r<1$, which means that if the mesh size $h$ is small enough, a small error requires a relatively large $N$. However, if $N$ gets larger, there are several disadvantages to solve the linear system \eqref{eq:linearsystem}.
\begin{itemize}
\item \high{Setting up the matrix takes up a lot of time and memory.} 
\item The time of solving the system is long.
\end{itemize}
To solve the scattering problems more efficiently, we would like to look for a new method that converges faster in $N$.

\section{Regularity results for the scattering problems}

A natural way to look for a new numerical method that converges faster as $N$ tends to $\infty$ is to go deeper into the regularity of the field $w(\alpha,\cdot)$ with respect to $\alpha\in\Wast$. Theorem a in \cite{Kirsc1993} gave a result for non-perturbed surfaces with incident plane waves. It could be a good inspiration for the problems in this paper.

\begin{theorem}[\cite{Kirsc1993}, Theorem a]
Suppose the incident field $u^i(\alpha,x)=e^{\i\alpha x_1-\i\sqrt{k^2-\alpha^2}x_2}$ for $\alpha\in(-k,k)$, and it is scattered by a smooth enough sound soft periodic surface.\\ a) The total field, denoted by $u(\alpha,x)$, depends continuously on $\alpha\in(-k,k)$ and analytically on $\alpha$ in the set
\begin{equation*}
\left\{\alpha\in(-k,k):\, (n\Lambda^*-\alpha)^2\neq k^2\text{ for every } n\in\Z\right\}.
\end{equation*}
\\ b) Let $\alpha_0\in(-k,k)$ such that $(n_0\Lambda^*-\alpha_0)^2=k_0^2$ for some $n_0\in\Z$. Then there is a neighbourhood $U$ of $\alpha_0$ and quasi-periodic functions $v(\alpha,\cdot),\,w(\alpha,\cdot)\in\widetilde{H}^1_\alpha(\Omega^\Lambda_H)$ that depend analytically on $\alpha\in U$ and satisfies $u(\alpha,x)=v(\alpha,x)+\sqrt{k^2-|n_0\Lambda^*-\alpha|^2}\, w(\alpha,x)$.
\end{theorem}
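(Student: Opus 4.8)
\medskip
\noindent\textbf{Sketch of proof.}
The plan is to regard the $\alpha$-quasi-periodic scattering problem, reduced to one period, as an operator equation on a Hilbert space that does \emph{not} depend on $\alpha$, and then to track how the coefficients of that equation depend on $\alpha$. For the purely periodic surface considered here the coupling term $b$ in \eqref{eq:var_bloch} is absent, and for each fixed $\alpha$ the cell problem reads: find $u(\alpha,\cdot)\in\widetilde{H}^1_\alpha(\Omega^\Lambda_H)$ with
\begin{equation*}
a_\alpha\big(u(\alpha,\cdot),\phi_\alpha\big)=\int_{\Gamma^\Lambda_H}f(\alpha,\cdot)\,\overline{\phi_\alpha}\d s\qquad\text{for all }\phi_\alpha\in\widetilde{H}^1_\alpha(\Omega^\Lambda_H).
\end{equation*}
Conjugating by the appropriate factor $e^{\pm\i\alpha x_1}$ (exactly as in the reduction $w_0=e^{\i\alpha x_1}w^s$ used later in the paper) turns this into $A(\alpha)\widetilde{u}(\alpha)=F(\alpha)$ in the fixed space of $\Lambda$-periodic $H^1$-functions vanishing on $\Gamma^\Lambda$, $A(\alpha)$ being the bounded operator associated with the periodised sesquilinear form via the Riesz map. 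The Dirichlet part of $a_\alpha$ yields a G{\aa}rding inequality, the zeroth-order term and the Dirichlet-to-Neumann term being relatively compact, so $A(\alpha)$ is Fredholm of index $0$; unique solvability of the sound-soft quasi-periodic problem for every $\alpha\in(-k,k)$ (classical for a smooth graph, cf.\ \cite{Kirsc1993}) then makes $A(\alpha)$ boundedly invertible, and $\widetilde{u}(\alpha)=A(\alpha)^{-1}F(\alpha)$.

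For part~(a), the only $\alpha$-dependence of $A(\alpha)$ enters through the symbols $m_j(\alpha)=\sqrt{k^2-|\Lambda^* j-\alpha|^2}$, $j\in\Z$, of $T^+_\alpha$, while $F(\alpha)$ depends on $\alpha$ only through $\sqrt{k^2-\alpha^2}$ (the incident data). If $\alpha_*\in(-k,k)$ is not a Wood anomaly then every radicand $k^2-|\Lambda^* j-\alpha_*|^2$ is nonzero, so near $\alpha_*$ each $m_j$ is analytic in $\alpha$ (for evanescent indices, $m_j=\i\sqrt{|\Lambda^* j-\alpha|^2-k^2}$ with analytic non-vanishing radicand); splitting $T^+_\alpha$ into its finitely many propagating modes plus an evanescent remainder (whose symbols are controlled uniformly in $j$) shows $\alpha\mapsto A(\alpha)$ is an analytic operator-valued map near $\alpha_*$, and so is $\alpha\mapsto F(\alpha)$ since $\alpha_*^2<k^2$. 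As the inverse of an analytic, pointwise-invertible operator family is analytic (Neumann series), $\widetilde{u}(\alpha)=A(\alpha)^{-1}F(\alpha)$, hence $u(\alpha,\cdot)$, depends analytically on $\alpha$ there. At a Wood anomaly the radicand $k^2-|\Lambda^* n_0-\alpha|^2$ has a simple zero, but $m_{n_0}$ is still continuous (physical branch $\Im m_{n_0}\ge0$), so $A(\alpha)$, $F(\alpha)$ are continuous and $A(\alpha)$ invertible; hence $u(\alpha,\cdot)$ is continuous on all of $(-k,k)$.

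For part~(b), fix $\alpha_0\in(-k,k)$ with $(n_0\Lambda^*-\alpha_0)^2=k^2$ (necessarily $n_0\neq0$, so $\alpha_0^2<k^2$ and $F$ is analytic near $\alpha_0$). Set $\beta(\alpha):=m_{n_0}(\alpha)=\sqrt{\rho(\alpha)}$, where $\rho(\alpha):=k^2-|\Lambda^* n_0-\alpha|^2$ is a polynomial with a simple zero at $\alpha_0$. Isolating the $n_0$-th Fourier mode of $T^+_\alpha$ writes $A(\alpha)=A^0(\alpha)+\beta(\alpha)L$ with $A^0$ analytic near $\alpha_0$ and $L$ a fixed rank-one operator (after conjugation it sees only the $n_0$-th periodic Fourier coefficient on $\Gamma^\Lambda_H$); I take the generic case of a single Wood index, the double case being handled identically via the Woodbury identity once one notes that the other singular symbol equals $\i\,r(\alpha)\,\beta(\alpha)$ with $r$ real-analytic and non-vanishing. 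Since $A^0(\alpha_0)=A(\alpha_0)$ is invertible, $A^0(\alpha)$ remains invertible for $\alpha$ in a complex neighbourhood of $\alpha_0$, and the Sherman--Morrison identity gives, for real $\alpha$ near $\alpha_0$,
\begin{equation*}
\widetilde{u}(\alpha)=A(\alpha)^{-1}F(\alpha)=V(\alpha)-\frac{\beta(\alpha)\,p(\alpha)}{1+\beta(\alpha)\,q(\alpha)}\,H(\alpha),
\end{equation*}
with $V,H$ analytic Hilbert-space-valued and $p,q$ analytic scalar-valued near $\alpha_0$. Multiplying numerator and denominator by $1-\beta q$ and using $\beta^2=\rho$ together with $1-\rho(\alpha_0)\,q(\alpha_0)^2=1\neq0$ rewrites this as
\begin{equation*}
\widetilde{u}(\alpha)=\underbrace{\Big(V+\tfrac{\rho\,p\,q}{1-\rho\,q^2}\,H\Big)}_{=:\,\widetilde{v}(\alpha)}\;+\;\beta(\alpha)\,\underbrace{\Big(-\tfrac{p}{1-\rho\,q^2}\,H\Big)}_{=:\,\widetilde{w}(\alpha)},
\end{equation*}
with $\widetilde{v},\widetilde{w}$ analytic near $\alpha_0$; undoing the conjugation yields $u(\alpha,x)=v(\alpha,x)+\beta(\alpha)\,w(\alpha,x)$ with $v(\alpha,\cdot),w(\alpha,\cdot)\in\widetilde{H}^1_\alpha(\Omega^\Lambda_H)$ quasi-periodic and analytic in $\alpha$, which is exactly the asserted representation.

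The step I expect to be the real work is making the operator-theoretic claims rigorous on the fixed space: showing that after conjugation $\alpha\mapsto T^+_\alpha$ is genuinely analytic (resp.\ continuous) into the bounded operators $H^{1/2}(\Gamma^\Lambda)\to H^{-1/2}(\Gamma^\Lambda)$ requires uniform-in-$j$ estimates on $m_j(\alpha)$ and its $\alpha$-derivatives modulo the weight $(1+j^2)^{1/2}$, i.e.\ that the evanescent tail is a uniformly analytic family of Fourier multipliers; one also has to check that the quantities $V,H,p,q$ produced by Sherman--Morrison are analytic and that the rearrangement above converges in the $H^1$-norm. A subordinate issue, already noted, is to have the G{\aa}rding inequality and the uniqueness of the sound-soft quasi-periodic problem available on all of $(-k,k)$, anomalies included.
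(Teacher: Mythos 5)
This statement is quoted from \cite{Kirsc1993} and the paper gives no proof of it; it only reuses the underlying mechanism later (analytic dependence of the periodised operator family on $\alpha$ via \cite{Kato1995}, with the singularity residing entirely in the finitely many square-root symbols of $T^+_\alpha$). Your reconstruction follows exactly that mechanism --- conjugation to a fixed periodic space, analytic invertibility off the Wood anomalies, and isolation of the rank-one (or rank-two) anomalous mode with a Sherman--Morrison rationalisation to extract the $v+\sqrt{\,\cdot\,}\,w$ splitting --- and is correct modulo the technical points you already flag (uniform analyticity of the evanescent multiplier tail, G\aa rding plus uniqueness on all of $(-k,k)$), so it matches the intended argument.
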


From the proof of Theorem a in \cite{Kirsc1993}, the singularity of the total fields with respect to incident plane waves comes from the DtN map. The singularity occurs at the points where at least one term of the DtN map vanishes (so called Wood anomalies, see \cite{Maystre1984}). However, for the scattering problems in this paper, the singularities may also come from the slow decay of the incident fields. In this section, we will only focus on the singularity comes from the DtN map. We will firstly consider the case that the incident field with compact support, and then extend the results to more generalized and commonly used incident fields. We will give a brief description of these singularity results, for details see \cite{Zhang2017d}.

\subsection{Notations and  function spaces}

 Define the set of Wood anomalies by
\begin{equation*}
\S:=\left\{\alpha\in\R:\,\exists\, n\in\Z,\,\text{ such that }\,|\Lambda^*n-\alpha|=k\right\},
\end{equation*}
which is a countable set in $\R$. Let $\left\{\alpha_j\right\}_{j\in\Z}$ be an ascending sequence of all the points in $\S$. To define the continuous or analytical "dependence" rigorously, we have to define the following function spaces first.

Let $\I\subset\R$ be an interval ($\I=\R$ included), $W\subset\R^2$ be a bounded domain, and define the space of functions defined in the  $\I\times W$ that depend analytically on the first variable by
\begin{equation*}
\begin{aligned}
C^\omega(\I;S(W)):=\Bigg\{f\in \D'(\I\times W):\,\forall\,\alpha_0\in\I,\,\exists\,\delta>0,\,s.t.,\,\forall\,\alpha\in(\alpha_0-\delta,\alpha_0+\delta)\cap\I,\Bigg.\\
\left.\exists\, C>0,\,f_n\in S(W),\,s.t.,\,f(\alpha,x)=\sum_{n=0}^\infty (\alpha-\alpha_0)^n f_n(x),\,\left\|f_n\right\|_{S(W)}\leq C^n\right\},
\end{aligned}
\end{equation*}
where $S(W)$ is a Sobolev space defined on the domain $W$. In this paper, $W=\Omega^\Lambda_H$, the function space $S(W)$ could be either $H^n(\Omega^\Lambda_H)$ or $\widetilde{H}^n(\Omega^\Lambda_H)$ for $n=0,1,2$. Define the space of functions that depend $C^n$-continuously on the first variable
\begin{equation*}
\begin{aligned}
C^n(\I;S(W)):=\left\{f\in \D'(\I\times W):\,\forall\alpha\in\I,\,j=0,\dots,n,\frac{\partial^j f(\alpha,\cdot)}{\partial\alpha^j}\in S(W),\right.\\\left.
\text{moreover, }\left\|\frac{\partial^j f(\alpha,\cdot)}{\partial\alpha^j}\right\|_{S(W)}\text{ is uniformly bounded for }\alpha\in\I\right\},
\end{aligned}
\end{equation*}
thus we can define the space $C^\infty(\I;S(W))$ in the same way
\begin{equation*}
\begin{aligned}
C^\infty(\I;S(W)):=\left\{f\in \D'(\I\times W):\,\forall\alpha\in\I,\,j=0,\dots,\infty,\frac{\partial^j f(\alpha,\cdot)}{\partial\alpha^j}\in S(W),\right.\\\left.
\text{moreover, }\left\|\frac{\partial^j f(\alpha,\cdot)}{\partial\alpha^j}\right\|_{S(W)}\text{ is uniformly bounded for }\alpha\in\I\right\},
\end{aligned}
\end{equation*}
We can also define the subspace of  $C^n(\I;S(W))$ for finite interval $\I=[A_0,A_1]$ by
\begin{equation*}
\begin{aligned}
&C_p^n(\I;S(W)):=\Bigg\{f\in C^n(\I; S(W)):\,\forall\,j=0,\dots,n,\Bigg.\\&\qquad\qquad\qquad\qquad\qquad
\left.
\lim_{t\rightarrow A_0^+}\left\|\frac{\partial^j f}{\partial t^j}(t,\cdot)\right\|_{S(W)}=\lim_{t\rightarrow A_1^-}\left\|\frac{\partial^j f}{\partial t^j}(t,\cdot)\right\|_{S(W)}=0\right\}
\end{aligned}
\end{equation*}
Similarly, we can also define $C_p^\infty(\I;S(W))$. Define the subspace $C^\omega(\I;S_\alpha(W))$ by
\begin{equation*}
C^\omega(\I;S_\alpha(W)):=\left\{f(\alpha,\cdot)\in C^\omega(\I;S(W));\,f
\text{ is $\alpha$-quasi-periodic for any  $\alpha\in\I$}\right\},
\end{equation*}
and also define $C^n(\I;S_\alpha(W))$ and $C^\infty(\I;S_\alpha(W))$ in the same way.

\subsection{Regularity properties for Bloch transformed solutions}

Firstly, assume that the incident field $u^i$ is compactly supported in $\Omega^H_p$, thus $u^i\big|_{\Gamma_H}\in H^1_r(\Gamma_H)$  for any  $r\in\R$, especially, for any $r>1/2$. Thus $f(\alpha,\cdot)$ in \eqref{eq:var_bloch_trans_old} belongs to the space $H_0^r(\Wast;H^{-1/2}_\alpha(\Gamma^\Lambda_H))\cap C^\omega(\Wast;H^{-1/2}_\alpha(\Gamma^\Lambda_H))$. From  Theorem \ref{th:continuous}, the Bloch transformed scattered field $w\in H_0^r(\Wast;\widetilde{H}^1_\alpha(\Omega^\Lambda_H))$. Moreover,  for any fixed $\alpha\in\Wast$ and $\phi_\alpha\in\widetilde{H}^1_\alpha(\Omega^\Lambda_H)$,
\begin{equation}
a_\alpha(w(\alpha,\cdot),\phi_\alpha)+b(u_T,\phi_\alpha)=\int_{\Gamma^\Lambda_H} f(\alpha,\cdot)\overline{\phi_\alpha}\d s.
\end{equation}

Set $w_\alpha(x):=\exp(\i\alpha x_1)w(\alpha,x)$, then $w_\alpha$ is a periodic function for any fixed $\alpha\in\W$. Define $\phi_\alpha:=\exp(-\i\alpha x_1)\phi(x)$ for any periodic function $\phi$. Then replace $w$ and $\phi_\alpha$ by $w_\alpha$ and $\phi$, for each $w_\alpha$, it satisfies the following variational equation
\begin{equation}\label{eq:var_periodic}
\int_{\Omega^\Lambda_H}\left[\grad_\alpha w_\alpha\cdot\overline{\grad_\alpha\phi}-k^2w_\alpha\overline{\phi}\right]-\int_{\Gamma^\Lambda_H}\widetilde{T}^+_\alpha\left[w_\alpha\big|_{\Gamma^\Lambda_H}\right]\overline{\phi}\d s=\int_{\Gamma^\Lambda_H}g(\alpha,\cdot)\overline{\phi}\d s-\widetilde{b}_\alpha(u_T,\phi),
\end{equation} 
where $\grad_\alpha=\grad-\i\alpha(1,0)^\top$, $\widetilde{T}^+_\alpha$ is defined by
\begin{equation*}
\widetilde{T}^+_\alpha(\psi)=\i\sum_{j\in\Z}\sqrt{k^2-|\Lambda^*j-\alpha|^2}\widehat{\psi}(j)e^{\i\Lambda^* x_1},\quad\psi=\sum_{j\in\Z}\widehat{\psi}(j)e^{\i\Lambda^* j x_1},
\end{equation*}
the sysquilinear form $\widetilde{b}_\alpha(\cdot,\cdot)$ is defined by
\begin{equation*}
\widetilde{b}_\alpha(\xi,\psi)=\left[\frac{\Lambda}{2\pi}\right]^{1/2}\int_{\Omega^\Lambda_H}e^{\i\alpha x_1}\left[(A_p-I)\grad\xi\cdot\overline{\grad_\alpha\psi}-k^2(c_p-1)\xi\overline{\psi}\right]\d x,
\end{equation*}
and 
\begin{equation*}
g(\alpha,x)=e^{\i\alpha x_1}f(\alpha,x).
\end{equation*}

As $f\in C^\omega(\Wast;H^{-1/2}_\alpha(\Gamma^\Lambda_H))$, the function $g(\alpha,\cdot)\in C^\omega(\Wast;H^{-1/2}_0(\Omega^\Lambda_H))$. So the right hand side of equation \eqref{eq:var_periodic} depends analytically on $\alpha$, for any $g\in C^\omega(\Wast;H^{-1/2}_0(\Omega^\Lambda_H))$ and $\phi\in \widetilde{H}^1_0(\Omega^\Lambda_H)$. From Riesz representation theorem, there is an operator $F_\alpha$ that maps $C^\omega(\Wast;H^{-1/2}_0(\Omega^\Lambda_H))$ to the space $\widetilde{H}^1_0(\Omega^\Lambda_H)$, such that
\begin{equation}
\int_{\Gamma^\Lambda_H}g(\alpha,\cdot)\overline{\phi}\d s-\widetilde{b}_\alpha(u_T,\phi)=\int_{\Omega^\Lambda_H}F_\alpha(g)\overline{\phi}\d x.
\end{equation} 
Moreover, the operator $F_\alpha$ depends analytically on $\alpha$, see Chapter 7, Section 1 in \cite{Kato1995}. Following the proof of Theorem a in \cite{Kirsc1993}, we can obtain the following property of $w_\alpha$ from the singularity with respect to $\alpha$ in the DtN map $\widetilde{T}^+_\alpha$.

\begin{theorem}
Suppose the incident field $u^i\in H^1(\Omega_H)$ has a compact support. Then  $w_\alpha(x)$ that solves \eqref{eq:var_bloch_trans} depends continuously in $\alpha\in\Wast$ and analytically on $\Wast\setminus\S$. If $\alpha_0\in\S$, i.e., there is an $n_0\in\Z$ such that $|n_0\Lambda^*-\alpha_0|=k$. Then there is a neighbourhood $U$ of $\alpha_0$ and two functions $v_1^\alpha,\,v_2^\alpha\in\widetilde{H}^1_0(\Omega^\Lambda_H)$ that depend analytically on $\alpha$, such that $w_\alpha=v_1^\alpha+\sqrt{k^2-|n_0\Lambda^*-\alpha|^2}\,v_2^\alpha$.
\end{theorem}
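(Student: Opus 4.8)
The plan is to reduce the variational equation \eqref{eq:var_periodic} to an operator equation in $\widetilde{H}^1_0(\Omega^\Lambda_H)$ in which the only non-analytic dependence on $\alpha$ is isolated in finitely many scalar coefficients $\sqrt{k^2-|\Lambda^*j-\alpha|^2}$, and then to apply an analytic-perturbation argument à la Theorem a of \cite{Kirsc1993}. First I would fix $\alpha_0\in\Wast$ and a small neighbourhood $U$ of $\alpha_0$; since $\S$ is discrete, for $U$ small enough there is at most one index $n_0\in\Z$ (present only when $\alpha_0\in\S$) for which $\beta_{n_0}(\alpha):=\sqrt{k^2-|\Lambda^*n_0-\alpha|^2}$ degenerates inside $U$, while for all other $j$ the square roots $\beta_j(\alpha)$ are analytic on $U$ (choosing a consistent branch for the finitely many propagating modes and the standard branch with positive imaginary part for the evanescent ones). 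Splitting the operator $\widetilde{T}^+_\alpha$ into the term with index $n_0$ and the remainder, I would write $\widetilde{T}^+_\alpha = \widetilde{T}^{+,\mathrm{reg}}_\alpha + \i\,\beta_{n_0}(\alpha)\,P_{n_0}$, where $P_{n_0}\psi = \widehat{\psi}(n_0)e^{\i\Lambda^*n_0 x_1}$ is the (rank-one, $\alpha$-independent) Fourier projection onto the $n_0$-th periodic mode and $\widetilde{T}^{+,\mathrm{reg}}_\alpha$ depends analytically on $\alpha\in U$.

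Next I would recast \eqref{eq:var_periodic} via the sesquilinear form as an equation $L_\alpha w_\alpha = F_\alpha(g)$ in $\widetilde{H}^1_0(\Omega^\Lambda_H)$, where, using the Riesz representation already invoked in the excerpt, $L_\alpha = A_\alpha + \i\,\beta_{n_0}(\alpha)\,K_{n_0}$; here $A_\alpha$ collects the volume form, the analytic part $\widetilde{T}^{+,\mathrm{reg}}_\alpha$ of the DtN map, and (the analytic in $\alpha$) form $\widetilde{b}_\alpha(u_T,\cdot)$, while $K_{n_0}$ is the bounded, self-adjoint, rank-one operator on $\widetilde{H}^1_0(\Omega^\Lambda_H)$ representing $\phi\mapsto\int_{\Gamma^\Lambda_H}(P_{n_0}w_\alpha)\overline\phi\d s$ via Riesz. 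The operator-valued map $\alpha\mapsto A_\alpha$ is analytic on $U$ (this is the content of Chapter 7 of \cite{Kato1995}, as cited), and the scalar $\beta_{n_0}(\alpha)$ is the only source of non-analyticity. The key structural observation is that $\beta_{n_0}(\alpha)^2 = k^2-|\Lambda^*n_0-\alpha|^2$ is a polynomial, hence analytic, in $\alpha$; so $L_\alpha$ has the form of an analytic operator pencil in the two "variables" $\alpha$ and $s:=\beta_{n_0}(\alpha)$, subject to the analytic constraint $s^2 = k^2-|\Lambda^*n_0-\alpha|^2$.

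The construction of $v_1^\alpha, v_2^\alpha$ then proceeds as follows. By Theorem \ref{th:uni_solv} (equivalently, the unique solvability in Theorem \ref{th:continuous}), $L_{\alpha}$ is invertible for each $\alpha\in\Wast$; I would argue that the two-variable family $\widehat{L}(\alpha,s):=A_\alpha + \i s\,K_{n_0}$ is invertible for $(\alpha,s)$ in a neighbourhood of $\{(\alpha,\beta_{n_0}(\alpha)):\alpha\in U\}$ — invertibility at $s=\beta_{n_0}(\alpha_0)$ is given, and it persists under the small analytic perturbation $\i(s-\beta_{n_0}(\alpha_0))K_{n_0} + (A_\alpha-A_{\alpha_0})$ by a Neumann-series argument, shrinking $U$ if necessary — so that $(\alpha,s)\mapsto \widehat{L}(\alpha,s)^{-1}F_\alpha(g)$ is analytic (jointly) there. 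Writing this inverse, as a function of $s$ for fixed $\alpha$, as $\widehat{L}(\alpha,s)^{-1}F_\alpha(g) = \sum_{m\ge0} s^m\,h_m(\alpha)$ with each $h_m\in C^\omega(U;\widetilde{H}^1_0(\Omega^\Lambda_H))$, and then using $s^2 = k^2-|\Lambda^*n_0-\alpha|^2 =: q(\alpha)$ (analytic in $\alpha$) to collapse even and odd powers, I get
\begin{equation*}
w_\alpha = \widehat{L}(\alpha,\beta_{n_0}(\alpha))^{-1}F_\alpha(g) = \underbrace{\sum_{m\ge0} q(\alpha)^m h_{2m}(\alpha)}_{=:v_1^\alpha} \;+\; \beta_{n_0}(\alpha)\underbrace{\sum_{m\ge0} q(\alpha)^m h_{2m+1}(\alpha)}_{=:v_2^\alpha},
\end{equation*}
and both series converge in $\widetilde{H}^1_0(\Omega^\Lambda_H)$, uniformly on a possibly smaller $U$, defining analytic functions $v_1^\alpha,v_2^\alpha$ of $\alpha$. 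Continuity on all of $\Wast$ follows because off $\S$ we are in the analytic regime (take $n_0$ absent, i.e. $v_2^\alpha\equiv0$ locally), and near each point of $\S$ the representation above, with $\beta_{n_0}(\alpha)\to0$ continuously, patches to a globally continuous field; a compactness/covering argument over $\Wast$ finishes it. The main obstacle I anticipate is the justification that the analytic operator family $A_\alpha$ (together with $F_\alpha(g)$) really is analytic in the Banach-space sense with the uniform Cauchy-type estimates demanded by the definition of $C^\omega(\I;S_\alpha(W))$ — this requires care with the branch cuts of the $\beta_j$ for $j\ne n_0$ (they must stay away from their own Wood anomalies on $U$, which forces the shrinking of $U$), with the mapping properties of $T^{+,\mathrm{reg}}_\alpha$ from $H^{1/2}$ to $H^{-1/2}$ being analytic, and with the quasi-periodicity bookkeeping when passing between $w_\alpha$ and $w(\alpha,\cdot)=e^{-\i\alpha x_1}w_\alpha$; all of this is essentially the adaptation of the proof of Theorem a in \cite{Kirsc1993} and is carried out in detail in \cite{Zhang2017d}.
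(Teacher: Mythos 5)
Your proposal is correct and follows essentially the route the paper itself takes: the paper gives no written proof of this theorem, but the paragraph preceding it (the Riesz representation of the right-hand side, the analyticity of $F_\alpha$ via \cite{Kato1995}, and the explicit appeal to the proof of Theorem a in \cite{Kirsc1993}) is exactly the plan you execute --- isolate the degenerate mode $\beta_{n_0}(\alpha)$ of the DtN map, treat everything else as an analytic operator family, and use the analyticity of $\beta_{n_0}(\alpha)^2$ to collapse the power series in $s$ into $v_1^\alpha+\beta_{n_0}(\alpha)\,v_2^\alpha$. The only cosmetic discrepancy is that $\widetilde{b}_\alpha(u_T,\cdot)$, with $u_T$ already determined, is an antilinear functional of the test function and so belongs on the right-hand side (inside $F_\alpha$, as the paper places it) rather than in the operator $A_\alpha$; since it is analytic in $\alpha$ either way, this does not affect your argument.
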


The singularity result could have a simplified form.

\begin{corollary}\label{th:decomp}
If $\alpha_0\in\S$, then there is a neighbourhood $U$ of $\alpha_0$ and two functions $w_1,\,w_2\in C^\omega(U;\widetilde{H}^1_\alpha(\Omega^\Lambda_H))$  such that $w(\alpha,\cdot)=w_1(\alpha,\cdot)+\sqrt{\alpha-\alpha_0}\,w_2(\alpha,\cdot)$.
\end{corollary}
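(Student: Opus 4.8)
\emph{Proof proposal.} The plan is to deduce the corollary from the preceding theorem by two elementary reductions: reversing the quasi-periodic twist $w_\alpha=e^{\i\alpha x_1}w(\alpha,\cdot)$, and factoring the square-root weight appearing there into $\sqrt{\alpha-\alpha_0}$ times an analytic, non-vanishing scalar.

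First I would record the algebraic factorisation. Since $\alpha_0\in\S$ with $|n_0\Lambda^*-\alpha_0|=k$, we have $(n_0\Lambda^*-\alpha_0)^2=k^2$, hence
\[
k^2-|n_0\Lambda^*-\alpha|^2=(n_0\Lambda^*-\alpha_0)^2-(n_0\Lambda^*-\alpha)^2=(\alpha-\alpha_0)\,h(\alpha),\qquad h(\alpha):=2n_0\Lambda^*-\alpha_0-\alpha .
\]
The factor $h$ is affine in $\alpha$ with $h(\alpha_0)=2(n_0\Lambda^*-\alpha_0)=\pm 2k\neq 0$, so after shrinking the neighbourhood $U$ of $\alpha_0$ we may assume $h$ is bounded away from $0$ on $U$. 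A non-vanishing analytic function on an interval has an analytic square root, so there is $q\in C^\omega(U;\C)$, non-vanishing, with $\sqrt{k^2-|n_0\Lambda^*-\alpha|^2}=\sqrt{\alpha-\alpha_0}\,q(\alpha)$ on $U$, where the branch of $q$ is fixed once and for all so that this identity is consistent with the branch of the square root used in the definition of $\widetilde{T}^+_\alpha$ (this being controlled by the sign of $h(\alpha_0)$).

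Now apply the theorem above: $w_\alpha=v_1^\alpha+\sqrt{k^2-|n_0\Lambda^*-\alpha|^2}\,v_2^\alpha$ with $v_1^\alpha,v_2^\alpha\in C^\omega(U;\widetilde{H}^1_0(\Omega^\Lambda_H))$, and set
\[
w_1(\alpha,\cdot):=e^{-\i\alpha x_1}v_1^\alpha,\qquad w_2(\alpha,\cdot):=e^{-\i\alpha x_1}q(\alpha)\,v_2^\alpha .
\]
Because $w(\alpha,\cdot)=e^{-\i\alpha x_1}w_\alpha$, the required identity $w=w_1+\sqrt{\alpha-\alpha_0}\,w_2$ holds; multiplication by $e^{-\i\alpha x_1}$ turns a periodic function into an $\alpha$-quasi-periodic one and preserves the vanishing trace on $\Gamma^\Lambda$, so $w_1(\alpha,\cdot),w_2(\alpha,\cdot)\in\widetilde{H}^1_\alpha(\Omega^\Lambda_H)$; and analyticity in $\alpha$ is inherited because the class $C^\omega(U;S(W))$ is stable under multiplication by scalar real-analytic functions of $\alpha$ (here $q(\alpha)$) and by $e^{-\i\alpha x_1}$, which one checks by expanding every factor about an arbitrary $\alpha_1\in U$ and forming the Cauchy product: the geometric coefficient bounds $\|f_n\|_{S(W)}\le C^n$ multiply to a geometric bound and the resulting series converges in $S(W)$. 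Hence $w_1,w_2\in C^\omega(U;\widetilde{H}^1_\alpha(\Omega^\Lambda_H))$, as claimed.

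I expect no genuine obstacle: all the substance sits in the preceding theorem, and the corollary merely renormalises its conclusion. The two points requiring a line of care are the branch choice for $q$ (resolved by $h(\alpha_0)=\pm 2k$) and the closedness of $C^\omega(U;S(W))$ under the two multiplications above, i.e.\ the Cauchy-product estimate just indicated.
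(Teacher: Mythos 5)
Your proposal is correct and follows essentially the same route as the paper: factor $k^2-|n_0\Lambda^*-\alpha|^2=(\alpha-\alpha_0)\,h(\alpha)$ with $h(\alpha_0)=\pm 2k\neq 0$, absorb the analytic square root of $h$ into $v_2^\alpha$, and untwist by $e^{-\i\alpha x_1}$. The only difference is that you spell out the branch choice and the Cauchy-product stability of $C^\omega(U;S(W))$ under scalar analytic multiplication, which the paper leaves implicit.
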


\begin{proof}
For any $\alpha_0\in\S$, there is either 1) an $n_0\in\Z$ such that $|\Lambda^*n-\alpha_0|=k$ or 2) two $n_1,\,n_2\in\Z$ such that $|\Lambda^*n_1-\alpha_0|=|\Lambda^*n_2-\alpha_0|=k$.

Suppose $|\Lambda^* n_j-\alpha_0|=k$, then $\Lambda^* n_j- k=\alpha_0$ or $\Lambda^* n_j+ k=\alpha_0$. If $\Lambda^* n_j-k=\alpha_0$, 
\begin{equation*}
\sqrt{k^2-|\Lambda^* n_j-\alpha|^2}=\sqrt{\alpha-\alpha_0}\sqrt{2k+\alpha_0-\alpha},
\end{equation*}
where the second term is analytic in a small neighbourhood of $\alpha_0$.  Thus the total field $w_\alpha$ could be written into the form $w_\alpha=w_1^\alpha+\sqrt{\alpha-\alpha_0}\,w_2^\alpha$, where $w_1^\alpha=v_1^\alpha$ and $w_2^\alpha=\sqrt{2k+\alpha_0-\alpha}\,v_2^\alpha$. Let $w_1(\alpha,\cdot)=e^{-\i\alpha x_1}w_1^\alpha$ and $w_2(\alpha,\cdot)=e^{-\i\alpha x_1}w_2^\alpha$, then $w(\alpha,\cdot)=w_1(\alpha,\cdot)+\sqrt{\alpha-\alpha_0}\,w_2(\alpha,\cdot)$. The case that $\Lambda^*n_j+k=\alpha_0$ is similar.  The proof is finished.

\end{proof}

We have just investigated the decomposition of the Bloch transformed field $w(\alpha,\cdot)$, when  $u^i$ is compactly supported. However, the result for this special case could be  extended to more generalized cases. Firstly, define the following two conditions for a function $u\in C^0(\I;S(W))$.
\begin{enumerate}
\item For any subinterval $\I_0\subset\I\setminus\S$, $u\in C^\omega(\I_0;S(W))$.
\item For any $\alpha_j\in\I\cap\S$, there is a small enough $\delta>0$ with two pairs $v_1,\,w_1\in C^\omega((\alpha_j-\delta,\alpha_j];S(W))$ and $v_2,\,w_2\in C^\omega([\alpha_j,\alpha_j+\delta);S(W))$ such that
\begin{eqnarray*}
&&u=v_1+\sqrt{\alpha-\alpha_j}\,w_1\text{ for }\alpha\in(\alpha_j-\delta,\alpha_j],\\
&&u=v_2+\sqrt{\alpha-\alpha_j}\,w_2\text{ for }\alpha\in[\alpha_j,\alpha_j+\delta).
\end{eqnarray*}
\end{enumerate}
Then the function spaces are defined as follows.
\begin{eqnarray*}\A^\omega_c(\I;S(W);\S):=\left\{ u\in C^0(\I;S(W)):\,\text{ $u$ satisfies conditions 1 and 2}\right\},
\end{eqnarray*}
and denote its subspace of all $\alpha$-quasi-periodic functions by $\A^\omega_c(\I;S_\alpha(W);\S)$.

\begin{assumption}\label{asp}
The incident field $u^i\in H_r^2(\Omega^H_p)$ for $r>1/2$ satisfies that its Bloch transform $\J_\Omega u^i\in\A_c^\omega\left(\Wast;H^2_\alpha(\Omega^\Lambda_H);\S\right)$.
\end{assumption}

\begin{remark}
From \cite{Zhang2017d}, the two types of incident fields, i.e., the half space Green's function and the Herglotz wave functions, satisfy Assumption \ref{asp}. Set $\Lambda=2\pi$ for simplicity,  then $\Lambda^*=1$.

The first step is to investigate the function $e^{\i\sqrt{k^2-|n-\alpha|^2} x_2}$. From the Taylor's expansion,
\begin{equation*}
\begin{aligned}
e^{\i\sqrt{k^2-|n-\alpha|^2} x_2}=\sum_{\ell\in\Z}\frac{(\i\sqrt{k^2-|n-\alpha|^2}x_1)^\ell}{\ell!}
\end{aligned}
\end{equation*}
For any $\alpha\in\Wast\setminus\S$, $e^{\i\sqrt{k^2-|n-\alpha|^2} x_2}$ is an analytic function with respect to $\alpha$. For any $\alpha_0\in\S$, there is an integer $n$ such that either $k=n-\alpha_0$ or $-k=n-\alpha_0$. Take  $k=n-\alpha_0$ for example,
\begin{equation*}
\begin{aligned}
&e^{\i\sqrt{k^2-|n-\alpha|^2} x_2}
=\sum_{\ell\in\Z}\frac{(\i\sqrt{k^2-|n-\alpha|^2}x_2)^{2\ell}}{(2\ell)!}+\sum_{\ell\in\Z}\frac{(\i\sqrt{k^2-|n-\alpha|^2}x_2)^{2\ell+1}}{(2\ell+1)!}\\
=&\sum_{\ell\in\Z}\frac{(|n-\alpha|^2-k^2)^{\ell}x_2^{2\ell}}{(2\ell)!}+\i\sqrt{\alpha-\alpha_0}\sqrt{2k+\alpha_0-\alpha}\sum_{\ell\in\Z}\frac{(|n-\alpha|^2-k^2)^{\ell}x_2^{2\ell+1}}{(2\ell+1)!}.
\end{aligned}
\end{equation*}
As the two series above converges absolutely, they are both analytic functions with respect to $\alpha$. In a small neighbourhood of $\alpha_0$, $\sqrt{2k+\alpha_0-\alpha}$ is also analytic, thus there are two functions $\psi_1(\alpha,x_1)$ and $\psi_2(\alpha,x_1)$ that depend analytically in a small neighbourhood of $\alpha_0$ such that
\begin{equation}\label{eq:split}
e^{\i\sqrt{k^2-|n-\alpha|^2} x_1}=\psi_1(\alpha,x_1)+\sqrt{\alpha-\alpha_0}\psi_2(\alpha,x_1).
\end{equation}

\noindent
1) The half space Green's function, defined by
\begin{equation*}
G(x,y)=\frac{\i}{4}\left[H_0^{(1)}(k|x-y|)-H_0^{(1)}(k|x-y'|)\right]
\end{equation*}
with the point $y=(y_1,y_2)^\top$ be above the (locally perturbed) periodic surface and $y'=(y_1,-y_2)^\top$. It is easy to check that the Bloch transform has the following representation
\begin{equation*}
\left(\J_\Omega G(\cdot,y)\right)(\alpha,x)=\frac{1}{2\pi}\sum_{j\in\Z}e^{\i(j-\alpha)(x_1-y_1)+\i\sqrt{k^2-|j-\alpha|^2}y_2}{\rm sinc}\left(\sqrt{k^2-|j-\alpha|^2}x_2\right)x_2,
\end{equation*}
where ${\rm sinc}$ is an analytic function defined by $\sin t/t$ for $t\neq 0$ with ${\rm sinc}(0)=1$. As the Taylor's series ${\rm sinc}=\sum_{j=0}^\infty\frac{(-t^2)^n}{(2n+1)!}$, the function ${\rm sinc}(\sqrt{k^2-|j-\alpha|^2}x_2)$ depends analytically on $\alpha\in\R$. Thus the only factor that may cause singularity is $e^{\i\sqrt{k^2-|j-\alpha|^2}}$ for finite number of $j$'s. From \eqref{eq:split}, the function $\J_\Omega G(\cdot,y)$ also have the similar form of decomposition as \eqref{eq:split}.

\noindent
2) The upward/downward propagating Herglotz wave function defined by
\begin{equation*}
(H^\mp g)(x)=\int_{\S_{\pm}}e^{\i k x\cdot d}g(d)\d s(d)=\int_{-\pi/2}^{\pi/2}e^{\i k(\cos t x_1\pm\sin t x_1)}\phi(t)\d t.
\end{equation*}
where $\S_+$ ($\S_-$) is the upper (lower) unit circle. $(H^\mp g)$ satisfies Assumption \eqref{asp}  when $\phi(t)=h(\cos(t))\cos(t)$ where $h$ is an analytic function defined in $[0,1]$ with $h(0)=0$.

Take $H^-g$ for example. From \cite{Lechl2015e}, the Bloch transform has the following representation
\begin{equation*}
\begin{aligned}
(\J_\Omega H^-g)(\alpha,x)&=\sum_{|j-\alpha|<k}e^{\i(j-\alpha)x_1-\i\sqrt{k^2-|j-\alpha|^2}x_2}\frac{\phi(\arcsin(j-\alpha)/k)}{\sqrt{k^2-|j-\alpha|^2}}\\
&=\frac{1}{k}\sum_{|j-\alpha|<k}e^{\i(j-\alpha)x_1-\i\sqrt{k^2-|j-\alpha|^2}x_2}h\left[\sqrt{1-|j-\alpha|^2/k^2}\right]
\end{aligned}
\end{equation*}
This is a finite series, we only need to investigate it by the decompositions of the exponential terms and the property of the  analytic function $h$. The calculation process is omitted.

\end{remark}

\begin{remark}
The plane waves do not satisfy Assumption \ref{asp}, for the plane waves belong to $H_r^2(\Omega^H_p)$ for $r<-1/2$.
\end{remark}

\high{With the incident fields satisfying Assumption \ref{asp}, the following results hold, see \cite{Zhang2017d}.
}

\begin{theorem}\label{th:reg3}
Suppose $u^i$  satisfies Assumption  \ref{asp}, then $w\in H_0^r\left(\Wast;\widetilde{H}_\alpha^2(\Omega^\Lambda_H)\right)$ satisfies \eqref{eq:var_bloch_trans} belongs to the space $\A_{c}^\omega\left(\Wast;\widetilde{H}^2_\alpha(\Omega^\Lambda_H);\S\right)$.
\end{theorem}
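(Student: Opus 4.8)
The plan is to combine the decomposition already available for the compactly supported case (Corollary~\ref{th:decomp} and the theorem preceding it) with the structure of Assumption~\ref{asp} on the incident field, using linearity of the variational problem~\eqref{eq:var_bloch_trans} and the analytic dependence of the solution operator away from $\S$. First I would split $w = w^s + \J_\Omega u^i$ where $w^s$ is the Bloch transform of the scattered part, so that it suffices to prove $w^s \in \A_c^\omega(\Wast;\widetilde H^2_\alpha(\Omega^\Lambda_H);\S)$ (the term $\J_\Omega u^i$ lies in the space by Assumption~\ref{asp}, and the space is clearly a vector space stable under addition of its quasi-periodic members). The field $w^s$ solves a variational problem of the form~\eqref{eq:var_periodic} after the gauge transformation $w_\alpha = e^{\i\alpha x_1}w(\alpha,x)$, with right-hand side built from $f(\alpha,\cdot)$, hence from $\J_\Omega u^i$ and its normal derivative on $\Gamma^\Lambda_H$; by Assumption~\ref{asp} this right-hand side itself belongs to $\A_c^\omega(\Wast;H^{-1/2}_\alpha(\Gamma^\Lambda_H);\S)$, i.e.\ it is analytic off $\S$ and admits a $\sqrt{\alpha-\alpha_j}$-decomposition near each Wood anomaly $\alpha_j$.

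The core argument is then a $\alpha$-by-$\alpha$ application of the solution operator $F_\alpha$ introduced just before Theorem~\ref{th:reg3}'s predecessor, which depends analytically on $\alpha$ (Kato, Chapter~7). On any subinterval $\I_0 \subset \Wast\setminus\S$, both $F_\alpha$ and the data are analytic, and the resolvent identity plus the uniform invertibility furnished by Theorem~\ref{th:uni_solv} give analyticity of $w^s$ there as a $\widetilde H^2_\alpha$-valued map — this uses Theorem~\ref{th:higherregu} to upgrade from $\widetilde H^1$ to $\widetilde H^2$ regularity in $x$, which is where the hypotheses $u^i \in H^2_r$ and $\zeta,\zeta_p \in C^{2,1}$ enter. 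Near a fixed $\alpha_0 = \alpha_j \in \S$ with $|n_0\Lambda^*-\alpha_0| = k$, I would substitute the ansatz $w_\alpha = v_1^\alpha + \sqrt{k^2 - |n_0\Lambda^*-\alpha|^2}\, v_2^\alpha$ into the variational equation, separate the ``rational in $\sqrt{\cdot}$'' and ``$\sqrt{\cdot}$-free'' parts of $\widetilde T^+_\alpha$ and of the data (the latter being exactly the content of condition~2 in the definition of $\A_c^\omega$), and obtain a $2\times 2$ block system for $(v_1^\alpha, v_2^\alpha)$ whose operator is analytic in $\alpha$ near $\alpha_0$ and invertible at $\alpha_0$ by a Fredholm/uniqueness argument. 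Rewriting $\sqrt{k^2-|n_0\Lambda^*-\alpha|^2} = \sqrt{\alpha-\alpha_0}\sqrt{2k+\alpha_0-\alpha}$ as in the proof of Corollary~\ref{th:decomp}, and absorbing the analytic factor $\sqrt{2k+\alpha_0-\alpha}$ into $v_2^\alpha$, then undoing the gauge via $w_i(\alpha,\cdot) = e^{-\i\alpha x_1} v_i^\alpha$, yields the required one-sided $\sqrt{\alpha-\alpha_j}$-decompositions on $(\alpha_j-\delta,\alpha_j]$ and $[\alpha_j,\alpha_j+\delta)$; the case of two indices $n_1,n_2$ is handled by the same bookkeeping with two square-root factors, which as in the corollary still collapses to a single $\sqrt{\alpha-\alpha_0}$ after pulling out analytic pieces.

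Two gluing checks finish the proof: continuity of $w^s$ across each $\alpha_j$ (so that the global function lies in $C^0(\Wast;\widetilde H^2_\alpha)$, matching the definition of $\A_c^\omega$) follows because at $\alpha = \alpha_j$ the singular term vanishes and the two analytic pieces $v_1^\alpha$ from the left and right agree with the unique $\widetilde H^1$-solution at $\alpha_j$; and periodicity in $\alpha$ with period $\Lambda^*$ together with the quasi-periodicity in $x$ is inherited from the mapping property in Theorem~\ref{th:uni_solv}, placing $w$ in $H^r_0(\Wast;\widetilde H^2_\alpha(\Omega^\Lambda_H))$ as asserted. The main obstacle I anticipate is the invertibility of the $2\times 2$ operator pencil at the anomaly $\alpha_0$: one must rule out that the ``doubling'' of unknowns introduced by the square-root substitution creates a spurious kernel, i.e.\ one has to show the pencil degenerates exactly along the trivial direction and that uniqueness for the original problem (Theorem~\ref{th:uni_solv}, which requires $\Gamma_p$ Lipschitz) transfers to this augmented system. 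This is precisely the point where following ``the proof of Theorem~a in \cite{Kirsc1993}'' is essential, and the technical work of \cite{Zhang2017d} is being invoked; here I would only sketch it and refer the reader to that paper for the detailed perturbation-theoretic estimates on $F_\alpha$.
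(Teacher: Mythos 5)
Your proposal is sound and would lead to the stated result, but it takes a noticeably heavier route than the paper does. The paper's proof is essentially a two-step linearity argument sitting on top of Corollary~\ref{th:decomp}: away from $\S$ nothing needs to be done, and near each $\alpha_j\in\S$ one writes the data as $\J_\Omega u^i=v_1+\sqrt{\alpha-\alpha_j}\,v_2$ with $v_1,v_2$ analytic (this is exactly condition~2 in Assumption~\ref{asp}), lets $w_1,w_2$ be the solutions driven by $v_1$ and $v_2$ separately, applies the already-established analytic-data result to each to get $w_\ell=w_\ell^1+\sqrt{\alpha-\alpha_j}\,w_\ell^2$, and then recombines using the elementary identity $\sqrt{\alpha-\alpha_j}\cdot\sqrt{\alpha-\alpha_j}=\alpha-\alpha_j$, which is analytic; the singular parts collapse into $w_1^1+(\alpha-\alpha_j)w_2^2$ plus $\sqrt{\alpha-\alpha_j}\,(w_1^2+w_2^1)$. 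You instead re-open the Kirsch-style perturbation analysis: substituting the square-root ansatz directly into the variational equation with singular data and proving invertibility of the resulting $2\times 2$ operator pencil at the anomaly. That pencil argument is precisely the content of the unnumbered theorem preceding Corollary~\ref{th:decomp} (proved in \cite{Kirsc1993} and \cite{Zhang2017d}), so the ``main obstacle'' you flag --- ruling out a spurious kernel of the augmented system --- is work the paper has already packaged and does not need to redo; citing that theorem and invoking linearity of the map from $f$ to $w$ lets you delete your entire second paragraph. Your approach buys a more self-contained exposition if one does not want to lean on the predecessor theorem, and your explicit attention to continuity across $\alpha_j$ and to the $\widetilde H^1\to\widetilde H^2$ upgrade via Theorem~\ref{th:higherregu} is a point the paper glosses over; but as written it duplicates the hard analytic-perturbation step and then defers it to the references anyway, whereas the paper's decomposition-plus-recombination argument closes the proof with no new functional-analytic input.
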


\begin{proof}
When $\alpha_0\in\Wast\setminus\S$, as $\J_\Omega u^i\in \A_c^\omega(\Wast;H_\alpha^2(\Omega^\Lambda_H))$, it depends analytically on $\alpha$ in a neighbourhood $U\subset\Wast\setminus\S$. From Corollary \ref{th:decomp}, $w(\alpha,\cdot)$ also depends analytically on $\alpha\in U$.

When $\alpha_0\in \Wast\cap\S$. Consider the case that $\alpha\in(\alpha_0-\delta,\alpha_0]$. Suppose there are $v_1(\alpha,\cdot),\,v_2(\alpha,\cdot)\in C^\omega((\alpha_0-\delta];H^2_\alpha(\Omega^\Lambda_H))$ such that $\J_\Omega u^i=v_1(\alpha,\cdot)+\sqrt{\alpha-\alpha_0}\,v_2(\alpha,\cdot)$, then the Bloch transformed fields $w_1(\alpha,\cdot),\,w_2(\alpha,\cdot)$ with respect to $v_1(\alpha,\cdot),\,v_2(\alpha,\cdot)$ have the following decompositions
\begin{equation*}
w_1(\alpha,\cdot)=w_1^1(\alpha,\cdot)+\sqrt{\alpha-\alpha_0}\,w_1^2(\alpha,\cdot);\quad w_2^\alpha=w_2^1(\alpha,\cdot)+\sqrt{\alpha-\alpha_0}\,w_2^2(\alpha,\cdot),
\end{equation*}
where $w_j^\ell(\alpha,\cdot)\in C^\omega((\alpha_0-\delta];H^2_\alpha(\Omega^\Lambda_H))$ for $j,\,\ell=1,2$. Then the Bloch transformed field $w$ with respect to the field $\J_\Omega u^i$ could be written into
\begin{equation*}
\begin{aligned}
w(\alpha,\cdot)&=w_1(\alpha,\cdot)+\sqrt{\alpha-\alpha_0}w_2(\alpha,\cdot)\\&=\left[w_1^1(\alpha,\cdot)+(\alpha-\alpha_0)w_2^2(\alpha,\cdot)\right]+\sqrt{\alpha-\alpha_0}\,\left[w_1^2(\alpha,\cdot)+w_2^1(\alpha,\cdot)\right].
\end{aligned}
\end{equation*}
Similar result could also be obtained for $\alpha\in[\alpha_0,\alpha_0+\delta)$. 

Thus $w(\alpha,\cdot)\in\A_c^\omega\left(\Wast;\widetilde{H}^2_\alpha(\Omega^\Lambda_H);\S)\right)$, the proof is finished.
\end{proof}

\high{The decomposition of the Bloch transformed total field could also be written by smooth functions.}

\begin{corollary}\label{th:reg4}
Suppose $u^i$ satisfies Assumption \ref{asp}, then the Bloch transform $w(\alpha,\cdot)=\left(\J_\Omega u\right)(\alpha,\cdot)$ depends continuously in $\alpha\in\Wast$. For each $j\in\Z$, for $\alpha\in[\alpha_j,\alpha_{j+1}]$, there are three  functions $w_\ell^j(\alpha,\cdot)\in C^\infty\left(\left[\alpha_j,\alpha_{j+1}\right];\widetilde{H}^2(\Omega^\Lambda_H)\right)$ where $\ell=0,1,2$ such that
\begin{equation}
w(\alpha,\cdot)=w_0^j(\alpha,\cdot)+\sqrt{\alpha-\alpha_j}\,w_1^j(\alpha,\cdot)+\sqrt{\alpha-\alpha_{j+1}}\,w_2^j(\alpha,\cdot).
\end{equation}
\end{corollary}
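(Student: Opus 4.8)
The plan is to build the global decomposition on each closed interval $[\alpha_j,\alpha_{j+1}]$ by patching together the local decompositions near the two endpoints (which are consecutive Wood anomalies) with the analyticity in the open interior, and then to absorb the analytic-in-a-neighbourhood building blocks into $C^\infty$ functions on the closed interval. Concretely, by Theorem \ref{th:reg3} we have $w\in\A_c^\omega(\Wast;\widetilde{H}^2_\alpha(\Omega^\Lambda_H);\S)$, so near $\alpha_j$ (from the right) there is $\delta_j>0$ and $v_1^+,w_1^+\in C^\omega([\alpha_j,\alpha_j+\delta_j);\widetilde H^2_\alpha)$ with $w=v_1^+ +\sqrt{\alpha-\alpha_j}\,w_1^+$, and near $\alpha_{j+1}$ (from the left) there is $\delta_{j+1}>0$ and $v_2^-,w_2^-\in C^\omega((\alpha_{j+1}-\delta_{j+1},\alpha_{j+1}];\widetilde H^2_\alpha)$ with $w=v_2^- +\sqrt{\alpha-\alpha_{j+1}}\,w_2^-$; on the compact middle piece $K=[\alpha_j+\delta_j/2,\alpha_{j+1}-\delta_{j+1}/2]\subset\Wast\setminus\S$, $w$ is analytic, hence smooth. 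I would reserve explicit $\sqrt{\alpha-\alpha_j}$ and $\sqrt{\alpha-\alpha_{j+1}}$ coefficients $w_1^j,w_2^j$ for the endpoint singularities and put everything else into the smooth remainder $w_0^j$.

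The mechanism for the patching is a smooth partition of unity on $[\alpha_j,\alpha_{j+1}]$: choose $\chi_j,\chi_{j+1},\chi_m\in C^\infty([\alpha_j,\alpha_{j+1}];[0,1])$ with $\chi_j+\chi_{j+1}+\chi_m\equiv 1$, $\operatorname{supp}\chi_j\subset[\alpha_j,\alpha_j+\delta_j)$, $\operatorname{supp}\chi_{j+1}\subset(\alpha_{j+1}-\delta_{j+1},\alpha_{j+1}]$, and $\operatorname{supp}\chi_m\subset(\alpha_j,\alpha_{j+1})$. Near $\alpha_j$ write $w=v_1^+ +\sqrt{\alpha-\alpha_j}\,w_1^+$; the key point is that on $\operatorname{supp}\chi_j$ the factor $\sqrt{\alpha-\alpha_{j+1}}$ is real-analytic (since $\alpha-\alpha_{j+1}$ stays in a fixed compact interval away from $0$), so $w_1^+=\big(\sqrt{\alpha-\alpha_{j+1}}\big)^{-1}\big(\sqrt{\alpha-\alpha_{j+1}}\,w_1^+\big)$ and we may legitimately move the $w_1^+$-part into the $\sqrt{\alpha-\alpha_{j+1}}$-slot or, more simply, just declare $\chi_j w_1^+$ to be (part of) $w_1^j$ and $\chi_j v_1^+$ to be (part of) $w_0^j$. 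Symmetrically, $\chi_{j+1}w_2^-$ contributes to $w_2^j$ and $\chi_{j+1}v_2^-$ to $w_0^j$; and the middle contribution $\chi_m w$ is smooth on all of $[\alpha_j,\alpha_{j+1}]$ (its support avoids both endpoints) so it goes entirely into $w_0^j$. Setting
\begin{equation*}
w_0^j:=\chi_j v_1^+ + \chi_{j+1} v_2^- + \chi_m w,\qquad w_1^j:=\chi_j w_1^+,\qquad w_2^j:=\chi_{j+1} w_2^-,
\end{equation*}
gives $w=\chi_j w + \chi_{j+1} w + \chi_m w = w_0^j + \sqrt{\alpha-\alpha_j}\,w_1^j + \sqrt{\alpha-\alpha_{j+1}}\,w_2^j$ on $[\alpha_j,\alpha_{j+1}]$.

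It remains to check the regularity claims: $w_1^j,w_2^j\in C^\infty([\alpha_j,\alpha_{j+1}];\widetilde H^2(\Omega^\Lambda_H))$ because each is a product of a compactly supported $C^\infty$ scalar cutoff with a function analytic on a one-sided neighbourhood of an endpoint (extended by zero), and $C^\omega$ restricted to a half-open interval against a cutoff vanishing near the far end is $C^\infty$ on the whole closed interval with all derivatives bounded — here one uses that the analytic series $\sum(\alpha-\alpha_j)^n f_n$ with $\|f_n\|\le C^n$ converges in $C^\infty$ on $[\alpha_j,\alpha_j+\delta_j/2]$. The same reasoning applies to $\chi_j v_1^+$ and $\chi_{j+1}v_2^-$, and $\chi_m w$ is smooth because $w$ is analytic on a neighbourhood of $\operatorname{supp}\chi_m$; hence $w_0^j\in C^\infty$. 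Continuity of $w$ in $\alpha\in\Wast$ is inherited directly from $w\in\A_c^\omega$ (indeed $C^0(\Wast;\widetilde H^2_\alpha)$) together with the matching of the one-sided limits at each $\alpha_j$, which is already encoded in the two decompositions agreeing at $\alpha_j$. The note $\widetilde H^2_\alpha$ versus $\widetilde H^2$: once the quasi-periodic phase is accounted for as in Corollary \ref{th:decomp}, multiplying by $e^{-\i\alpha x_1}$ turns the $\alpha$-quasi-periodic spaces into the ordinary periodic space $\widetilde H^2(\Omega^\Lambda_H)$, at the cost of an extra analytic factor that is harmlessly absorbed. The main obstacle — really the only subtle point — is verifying that the cutoff-times-analytic products are genuinely $C^\infty$ up to and including the Wood-anomaly endpoints with uniformly bounded derivatives in the $\widetilde H^2$-norm; this is where the exponential bound $\|f_n\|_{S(W)}\le C^n$ in the definition of $C^\omega(\I;S(W))$ does the work, since it lets one differentiate the power series term by term and bound the result on any slightly shrunken subinterval.
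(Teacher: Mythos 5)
Your proposal is correct, and it fills in an argument the paper actually omits: Corollary \ref{th:reg4} is stated without proof as an immediate consequence of Theorem \ref{th:reg3}, and your partition-of-unity patching of the two one-sided endpoint decompositions from the definition of $\A_c^\omega$ with the analytic interior is precisely the argument being left implicit. Your regularity checks are the right ones — in particular that a smooth cutoff times a one-sided $C^\omega$ factor with coefficient bound $\|f_n\|_{S(W)}\leq C^n$ is $C^\infty$ up to the endpoint with uniformly bounded derivatives on the (compact) support — and the brief detour about moving $\chi_j w_1^+$ into the $\sqrt{\alpha-\alpha_{j+1}}$ slot is unnecessary, as you yourself note before taking the simpler route.
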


\section{The inverse Bloch transform}

In \cite{Lechl2016a,Lechl2016b,Lechl2017}, the accuracy of the numerical inverse Bloch transform plays an important role in the convergence of the numerical schemes. In this section, we will discuss a new numerical method for the inverse Bloch transform $\J^{-1}_\Omega w$. \high{ For references we refer to \cite{Colto2013}, Chap 3.5.}

\high{ From the previous section, the Bloch transformed solution $w(\alpha,\cdot)$ only has square-root like singularities at the points in $\S$, i.e., any $\alpha\in\Wast$ such that $|\Lambda^* n-\alpha|=k$ for some $n\in \N$. Thus there is an $n\in\Z$, such that $\alpha=\Lambda^*n-k$ or $\alpha=\Lambda^*n+k$. Thus we can define the real number $\k$ by 
\begin{equation*}
\k:=\min\left\{|\Lambda^*n-k|:\,n\in\Z\right\},
\end{equation*}
 then $\k\leq \Lambda^*/2$ and $\S$ could also be represented by $\k$
 \begin{equation*}
 \S=\left\{\Lambda^*n\pm\k:\,n\in\Z\right\}.
 \end{equation*} }
 There are two different kinds of representations of points in $\S$ by $\k$.
\begin{itemize}
\item Case 1, $\k=\frac{m\Lambda^*}{2}$ where $m=0,1$, the set of singularities $\S=\{\k+n\Lambda^*:\,n\in\Z\}$.
\item Case 2, $\k\neq\frac{m\Lambda^*}{2}$ for $m=0,1$, $\S=\{\k+m\Lambda^*,\,-\k+n\Lambda^*:\,m,n\in\Z\}$.
\end{itemize}

For simplicity, we redefine one periodic cell $\Wast$ by $\Wast=\left(-\k,\Lambda^*-\k\right]$. Define the singular points in $\overline{\Wast}$ as follows.
\begin{itemize}
\item For Case 1, let $a_0=-\k$, $a_1=\Lambda^*-\k$.
\item For Case 2, let $a_0=-\k$, $a_1=\k$, $a_2=\Lambda^*-\k$.
\end{itemize}
The  redefinition is proper as the field $w(\alpha,\cdot)$ is $\Lambda^*$-periodic in $\alpha$ for any fixed $x$,
\begin{equation*}
\int_{-\Lambda^*/2}^{\Lambda^*/2}w(\alpha,\cdot)\d\alpha=\int_{-\k}^{\Lambda^*-\k}w(\alpha,\cdot)\d\alpha=\J^{-1}_\Omega w.
\end{equation*}

\subsection{Change of variables}\label{sec:def_g}

To design a new algorithm with a greater convergence rate, we intend to use a "better" contour to take place of the straight line in the integral of the inverse Bloch transform. 
The strategy adopted in this paper is illustrated and utilized in Chap 3.5, \cite{Colto2013}. In this section, we will recall the method and apply it to the numerical approximation of the inverse Bloch transform. Define a  monotonic function $g\in C^\infty[a_j,a_{j+1}]$ such that 
\begin{equation*}
g(a_j)=a_{j},\,g(a_{j+1})=a_{j+1};\, g'>0\text{ in }(a_j,a_{j+1}).
\end{equation*}
 Moreover, assume that the following conditions are also satisfied.
\begin{assumption}\label{th:asp}
Assume that there is a $\delta>0$, such that
\begin{eqnarray*}
&&g(t)-a_j=C_j h^2(t-a_j),\quad t\in[a_j,a_j+\delta);\\
&&a_{j+1}-g(t)=C_j h^2(a_{j+1}-t),\quad t\in(a_{j+1}-\delta,a_{j+1}],
\end{eqnarray*}
\high{where the function $h\in C^\infty[0,\delta)$  satisfies either of the following two conditions.}
\begin{enumerate}
\item There is a positive integer $m\in\N$ such that $h(t)=O(t^{m+1})$ as $t\rightarrow 0^+$.
\item $h(t)=o(t^n)$ for any $n\in\N$ as $t\rightarrow 0^+$.
\end{enumerate}
\end{assumption}
The contours  in one periodic cell $\Wast(\Lambda=2\pi)$ are shown in Figure \ref{fig:contour}. For Case 1, we take $k=1$ as an example, and the contour is plotted in (a); for Case 2, $k=1.2$ is taken as an example, see (b).




\begin{figure}[htb]
\centering
\begin{tabular}{c c}
\includegraphics[width=0.4\textwidth]{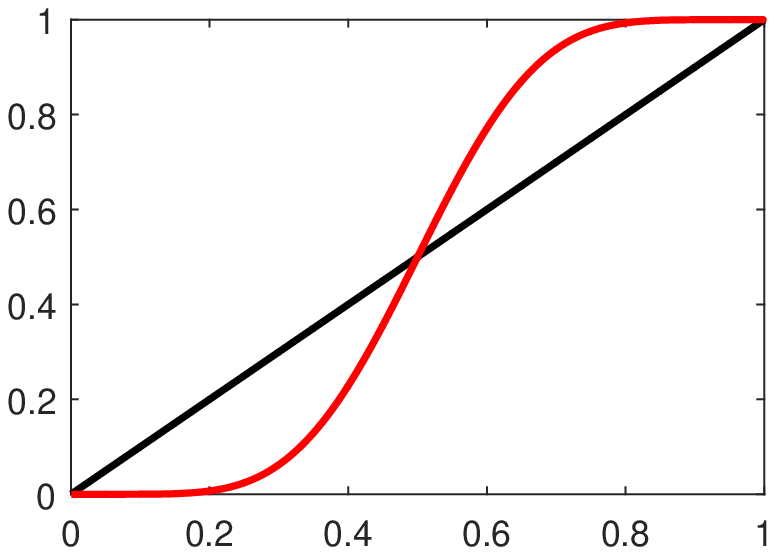} 
& \includegraphics[width=0.4\textwidth]{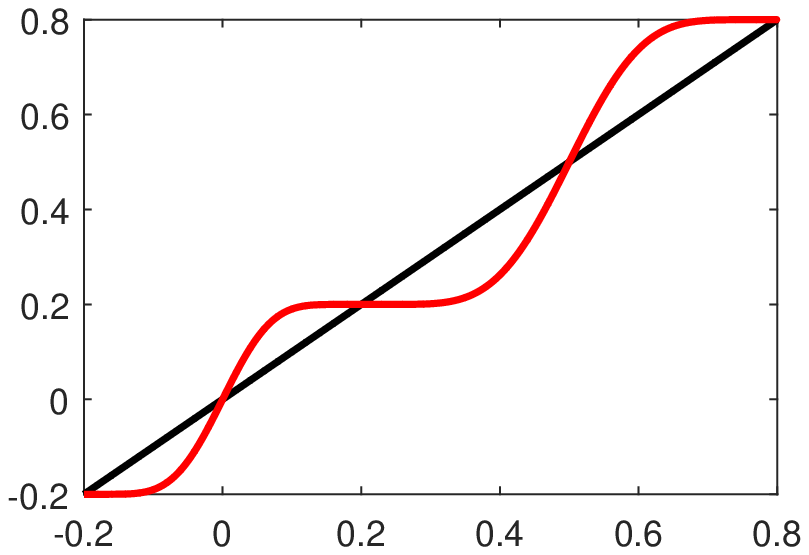}\\[-0cm]
(a) & (b)  
\end{tabular}%
\caption{(a)-(b): The two cases of of the locations of singularities and contours.}
\label{fig:contour}
\end{figure}

With the transform of $\alpha=g(t)$, the inverse Bloch transform has the representation
\begin{equation}\label{eq:inv_Bloch}
\left(\J^{-1}w\right)(x)=\int_{\Wast}w(\alpha,\cdot)\d\alpha=\int_\Wast w(g(t),\cdot)g'(t)\d t.
\end{equation} 
For Case 1, $\Wast=(a_0,a_1]$,
\begin{equation*}
\left(\J^{-1}w\right)(x)=\int_{a_0}^{a_1} w(g(t),\cdot)g'(t)\d t;
\end{equation*} 
for Case 2, $\Wast=(a_0,a_2]$,
\begin{equation*}
\begin{aligned}
\left(\J^{-1}w\right)(x)=\int_{a_0}^{a_1} w(g(t),\cdot)g'(t)\d t+\int_{a_1}^{a_2} w(g(t),\cdot)g'(t)\d t.
\end{aligned}
\end{equation*} 

Let the interval $[A_0,A_1]$ be either $[a_0,a_1]$ (Case 1 \& 2)  or $[a_1,a_2]$ (Case 2). Define 
\begin{equation}\label{eq:integrand}
v(t,\cdot):=w(g(t),\cdot)g'(t),\quad t\in[A_0,A_1],
\end{equation}
then the inverse Bloch transform
\begin{equation}
\left(\J^{-1}_\Omega w\right)(x)=\int_\Wast v(t,x)\d t,\quad x\in\Omega^\Lambda_H.
\end{equation}

We will study the Bloch transform when $w$ satisfies certain conditions.
\begin{assumption}\label{asp2}
$w\in C^0([A_0,A_1];S_\alpha(W))$ has a form of 
\begin{equation*}
w=w_2+\sqrt{\alpha-A_0}\, w_0+\sqrt{\alpha-A_1}\,w_1,
\end{equation*}
 where $w_0,\,w_1,\,w_2\in C^\infty([A_0,A_1];S(W))$.
\end{assumption}

\begin{theorem}\label{th:smoothness}
If there is an $m\in\N$ such that $h(t)=O(t^{m+1})$ as $t\rightarrow 0^+$, for any $w$ satisfies Assumption \ref{asp2}, $v$ defined by \eqref{eq:integrand} belongs to the  function space $C^{2m}_p\left([A_0,\high{A_1}];S_{g(t)}(W)\right)\cap C^\infty\left([A_0,A_1];S_{g(t)}(W)\right)$.
\end{theorem}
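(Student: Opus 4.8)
The plan is to reduce everything to a one-dimensional statement about scalar functions of $t$ with values in the Banach space $S(W)$, since the quasi-periodicity and the $x$-dependence play no role in the smoothness/vanishing analysis: the factor $e^{\i g(t) x_1}$ relating $S_{g(t)}(W)$ to $S(W)$ is itself smooth in $t$ and uniformly bounded on the compact interval, so it suffices to prove the claim for $v(t,\cdot) = w(g(t),\cdot) g'(t)$ regarded as a curve into $S(W)$. First I would plug the decomposition of Assumption \ref{asp2} into \eqref{eq:integrand}, writing $v = v_2 + v_0 + v_1$ with $v_2(t,\cdot) = w_2(g(t),\cdot)g'(t)$ and $v_i(t,\cdot) = \sqrt{g(t)-A_i}\, w_i(g(t),\cdot) g'(t)$ for $i=0,1$. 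The term $v_2$ is a composition of a $C^\infty$ map $[A_0,A_1]\to[A_0,A_1]$ with a $C^\infty$ Banach-valued curve, times $g'\in C^\infty$, hence lies in $C^\infty([A_0,A_1];S(W))$ by the chain/Leibniz rule; so the whole issue is the two square-root terms, and by symmetry it is enough to treat $v_0$ near the endpoint $A_0$ (near $A_1$ it is smooth as $g(t)-A_0$ stays bounded away from zero).

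Near $A_0$ I would invoke Assumption \ref{th:asp}: on $[A_0,A_0+\delta)$ we have $g(t)-A_0 = C_0\, h^2(t-A_0)$ with $h\in C^\infty[0,\delta)$ and $h(t) = O(t^{m+1})$. Hence $\sqrt{g(t)-A_0} = \sqrt{C_0}\,|h(t-A_0)|$, and since $h(s)=O(s^{m+1})$ one can write $h(s) = s^{m+1}\tilde h(s)$ with $\tilde h\in C^\infty$ (this is the Hadamard-lemma/Taylor-with-integral-remainder step, applied to a real-valued $C^\infty$ function); therefore $\sqrt{g(t)-A_0} = \sqrt{C_0}\,(t-A_0)^{m+1}\,|\tilde h(t-A_0)|$. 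The subtle point is the absolute value on $\tilde h$: one must argue that shrinking $\delta$ makes $\tilde h$ of one sign (nonzero), or alternatively absorb the sign and work with $(t-A_0)^{m+1}$ times a $C^\infty$ function $\psi(t):=\sqrt{C_0}\,\tilde h(t-A_0)\, g'(t)\, w_0(g(t),\cdot)$ up to sign — in any case $v_0(t,\cdot) = (t-A_0)^{m+1}\psi(t,\cdot)$ with $\psi\in C^\infty([A_0,A_0+\delta);S(W))$. Then the Leibniz rule gives $\partial_t^j v_0 = \sum_{\ell\le j}\binom{j}{\ell}\bigl(\partial_t^{j-\ell}(t-A_0)^{m+1}\bigr)\partial_t^\ell\psi$, and $\partial_t^{j-\ell}(t-A_0)^{m+1}$ is a constant multiple of $(t-A_0)^{m+1-(j-\ell)}$, which stays bounded (indeed tends to $0$ at $t=A_0$) as long as $j-\ell \le m+1$, i.e. for all $j\le m+1 \le 2m$ when $m\ge 1$; combined with the analogous estimate near $A_1$ one gets that all derivatives up to order $2m$ exist, are uniformly bounded on $[A_0,A_1]$, and vanish at both endpoints — this is precisely membership in $C^{2m}_p([A_0,A_1];S_{g(t)}(W))$. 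Meanwhile on the open interval $(A_0,A_1)$ every factor (including $(t-A_0)^{m+1}$ and the genuine smoothness of $w_i$, $g$) is $C^\infty$, so $v\in C^\infty((A_0,A_1);S_{g(t)}(W))$, which upgrades to $C^\infty([A_0,A_1];S_{g(t)}(W))$ in the sense of the paper's definition once one checks the one-sided derivatives at the endpoints are finite — but those are exactly the $(t-A_0)^{m+1-(j-\ell)}$ expressions, which are finite (and in fact $0$) at the endpoint for every $j$, since increasing $j$ only increases the exponent deficit but the term simply vanishes identically once the exponent goes negative is NOT what happens — here is the one place to be careful.

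I expect the main obstacle to be reconciling the two conclusions $C^{2m}_p$ and $C^\infty$ at the endpoints: $(t-A_0)^{m+1}$ is only $C^{m}$-ish in the naive sense once we keep differentiating past order $m+1$, because $\partial_t^{m+2}(t-A_0)^{m+1} = 0$ identically, so actually all higher $t$-derivatives of $(t-A_0)^{m+1}$ exist and equal zero — meaning $v_0$ genuinely is $C^\infty$ up to and including the endpoint, with all derivatives of order $>m+1$ controlled purely by $\partial_t^\ell\psi$ times lower powers of $(t-A_0)$ that are bounded. So the resolution is that $(t-A_0)^{m+1}$ is a polynomial, hence $C^\infty$, and the only derivatives that fail to vanish at $A_0$ are those of order $>m+1$; since $2m \ge m+1$ for $m\ge 1$, all derivatives up to order $2m$ do vanish there, giving the $C^{2m}_p$ statement, while the $C^\infty$ statement holds because the full curve $v$ (sum of the smooth $v_2$ and the polynomial-times-smooth $v_0,v_1$) is a genuine $C^\infty$ $S(W)$-valued curve with uniformly bounded derivatives of every order on the compact interval. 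I would write the argument cleanly by isolating the scalar factor lemma ("if $h\in C^\infty[0,\delta)$ and $h(s)=O(s^{m+1})$ then $h(s)=s^{m+1}\tilde h(s)$ with $\tilde h\in C^\infty$, and $\sqrt{h^2}=s^{m+1}|\tilde h|$ with $|\tilde h|\in C^\infty$ after shrinking $\delta$"), then applying Leibniz, and finally transferring back through $e^{\i g(t)x_1}$ to recover the $S_{g(t)}$-valued statement.
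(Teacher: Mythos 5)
There is a genuine gap, and it sits exactly at the defining feature of the space $C^{2m}_p$: the vanishing of all derivatives up to order $2m$ at the endpoints. You try to extract this vanishing from the factor $\sqrt{g(t)-A_0}=\sqrt{C_0}\,|h(t-A_0)|\sim (t-A_0)^{m+1}$, but this fails in two ways. First, it says nothing about the term $v_2(t,\cdot)=w_2(g(t),\cdot)\,g'(t)$, which carries no square root; you dismiss it as ``smooth, hence not the issue,'' yet membership in $C^{2m}_p$ requires $\partial_t^j v_2$ to vanish at $A_0$ and $A_1$ for all $j\le 2m$ as well. Second, even for the square-root terms, writing $v_0=(t-A_0)^{m+1}\psi$ with $\psi$ merely smooth gives, by your own Leibniz computation, $\partial_t^{j}v_0(A_0)=\binom{j}{m+1}(m+1)!\,\partial_t^{\,j-m-1}\psi(A_0)$ for $j\ge m+1$, which is not zero in general; the factor $(t-A_0)^{m+1}$ only kills derivatives up to order $m$ (and your statement that $\partial_t^{m+1}(t-A_0)^{m+1}$ ``tends to $0$'' is false --- it is the constant $(m+1)!$). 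The jump from ``$j\le m+1\le 2m$'' to ``all derivatives up to order $2m$ vanish'' is a non sequitur.

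The missing ingredient, which is the heart of the paper's proof, is the factor $g'(t)$ that multiplies \emph{every} term of $v$. From Assumption \ref{th:asp}, near $A_0$ one has $g'(t)=2C_0\,h(t-A_0)\,h'(t-A_0)$, and since $h(s)=O(s^{m+1})$ implies $h'(s)=O(s^{m})$, this product is $O((t-A_0)^{2m+1})$; hence all derivatives of $g'$ up to order $2m$ vanish at $A_0$ (and symmetrically at $A_1$). A single application of the Leibniz rule to $\widetilde w_0(g(t),\cdot)\,g'(t)$ and to $w_0(g(t),\cdot)\sqrt{g(t)-A_0}\,g'(t)$ --- whose other factors have uniformly bounded derivatives --- then yields the vanishing of $\partial_t^j v$ at the endpoints for all $j\le 2m$, which is the $C^{2m}_p$ claim. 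Your reduction to $S(W)$-valued curves, your treatment of the $C^\infty$ part, and your handling of $|\tilde h|$ are all fine; the proof is repaired by absorbing $g'$ not into an anonymous smooth $\psi$ but isolating it as the factor $2C_0\,h(t-A_j)h'(t-A_j)=O((t-A_j)^{2m+1})$ responsible for the endpoint decay.
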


\begin{proof}
\high{First, consider the case that $S(W)=L^2(\Omega^\Lambda_H)$.}
Consider the function with $\alpha$ near the point $A_0$. From Assumption \ref{asp2}, for any $0<\delta<A_1-A_0$, in the interval   $[A_0,A_0+\delta]$, $w(\alpha,x)$ has the form of 
\begin{equation*}
w(\alpha,x)=\widetilde{w}_0(\alpha,x)+\sqrt{\alpha-A_0}\,w_0(\alpha,x),
\end{equation*}
where $\widetilde{w}_0(\alpha,x)=w_2(\alpha,x)+\sqrt{\alpha-A_1}\, w_1(\alpha,x)$ belongs to $C^\infty\left([A_0,A_0+\delta];S(W)\right)$. 
Then for any $t\in[A_0,A_0+\delta]$,
\begin{equation*}
\begin{aligned}
v(t,x)&=\widetilde{w}_0(g(t),x)g'(t)+w_0(g(t),x)\sqrt{g(t)-A_0}\, g'(t)\\
&=2 C_0 \widetilde{w}_0(A_0+h^2(t-A_0),x)h(t-A_0)h'(t-A_0)\\
&+2 C_0 w_0(A_0+h^2(t-A_0),x)h^2(t-A_0)h'(t-A_0).
\end{aligned}
\end{equation*}
\high{From the representation of $\widetilde{w}_0$ and $v$, 
\begin{equation*}
\|v(t,\cdot)\|_{S(W)}\leq C(g)\left[\|w_0\|_{S(W)}+\|w_1\|_{S(W)}+\|w_2\|_{S(W)}\right].
\end{equation*}
}
Take the first term for example. As $h\in C^\infty[0,\delta)$ and $g\in C^\infty[A_0,A_1]$, the function $v\in C^\infty([A_0,A_1];S_{g(t)}(W))$.  As $h(t)=O(t^{m+1})$, $h(t-A_0)h'(t-A_0)=O((t-A_0)^{2m+1})$, then 
\begin{equation*}
\lim_{t\rightarrow A_0^+}\left[h(t-A_0)h'(t-A_0)\right]^{(\ell)}(t)=0, \,\quad\text{ for any }\ell=0,1,\dots,2m.
\end{equation*} 
From the $\ell$-th derivative w.r.t. $t$
\begin{equation*}
\begin{aligned}
&\frac{\partial^\ell }{\partial t^\ell}\left[\widetilde{w}_0(A_0+h^2(t-A_0),x)h(t-A_0)h'(t-A_0)\right]\\
=
&\sum_{j=0}^\ell\left(\begin{matrix}
\ell\\j
\end{matrix}\right)\frac{\partial^j}{\partial t^j}\left[\widetilde{w}_0(A_0+h^2(t-A_0),x)\right]\frac{\partial^{\ell-j}}{\partial t^{\ell-j}}\left[h(t-A_0)h'(t-A_0)\right],
\end{aligned}
\end{equation*}
in each term, the first element $\frac{\partial^j}{\partial t^j}\left[\widetilde{w}_0(A_0+h^2(t-A_0),x)\right]$ is a finite sum of the terms if form of $\frac{\partial^\ell}{\partial t^\ell}\left[\widetilde{w}_0(A_0+h^2(t-A_0),x)\right]\Pi_{m=0}^N \left[h^{(m)}(t-A_0)\right]^{n(m)}$, where $N$ is some positive integer and $n(m)\geq 0$ in $\N$. Thus the norm $\left\|\frac{\partial^j}{\partial t^j}\left[\widetilde{w}_0(A_0+h^2(t-A_0),x)\right]\right\|_{S(W)}$ is uniformly bounded for $t\rightarrow A_0^+$. From $\lim_{t\rightarrow A_0^+}\left[h(t-A_0)h'(t-A_0)\right]^{\ell}(t)=0$, 
\begin{equation*}
\left\|\frac{\partial^\ell v(t,x)}{\partial t^\ell}\right\|_{S(W)}\rightarrow 0\quad \text{as }t\rightarrow A_0^+.
\end{equation*}
Similarly, we can also prove that $\left\|\frac{\partial^\ell v(t,x)}{\partial t^\ell}\right\|_{S(W)}\rightarrow 0$ as $t\rightarrow A_1^-$, $\ell=0,1,\dots,2m$, thus $v\in C_p^{2m}\left([A_0,A_1];S_{g(t)}(W)\right)$. 

\high{The case that $S(W)=H^n(\Omega^\Lambda_H)$ for any $n\in\N$ could be proved similarly, thus we omit the proof here.}
\end{proof}

A direct corollary shows a similar result for the case that $h(t)=o(t^n)$ as $t\rightarrow 0^+$, $\forall n$.
\begin{corollary}
If $h(t)=o(t^n)$ as $t\rightarrow 0^+$ for any positive integer $n$, for any $w$ satisfies Assumption \ref{asp2}, $v\in C_p^\infty\left([A_0,A_1];S_{g(t)}(W)\right)$.
\end{corollary}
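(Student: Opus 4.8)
The plan is to deduce the corollary directly from Theorem \ref{th:smoothness} by showing that the hypothesis $h(t)=o(t^n)$ for every $n\in\N$ is, for the purposes of the argument, ``better'' than the hypothesis $h(t)=O(t^{m+1})$ for any fixed $m$. Concretely, I would first observe that $h(t)=o(t^n)$ for all $n$ implies in particular $h(t)=O(t^{m+1})$ for every $m\in\N$, so Theorem \ref{th:smoothness} applies with that $m$ and yields $v\in C_p^{2m}\bigl([A_0,A_1];S_{g(t)}(W)\bigr)\cap C^\infty\bigl([A_0,A_1];S_{g(t)}(W)\bigr)$. Since $m$ was arbitrary, taking the intersection over all $m\in\N$ gives $v\in\bigcap_{m\in\N} C_p^{2m}\bigl([A_0,A_1];S_{g(t)}(W)\bigr)=C_p^\infty\bigl([A_0,A_1];S_{g(t)}(W)\bigr)$, which is exactly the claim.

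In more detail, I would run the proof of Theorem \ref{th:smoothness} once more but track the vanishing order carefully. Near $A_0$ we have, for $w$ satisfying Assumption \ref{asp2},
\begin{equation*}
v(t,x)=2C_0\,\widetilde w_0\bigl(A_0+h^2(t-A_0),x\bigr)h(t-A_0)h'(t-A_0)+2C_0\,w_0\bigl(A_0+h^2(t-A_0),x\bigr)h^2(t-A_0)h'(t-A_0),
\end{equation*}
with $\widetilde w_0\in C^\infty$. The only factor carrying a zero at $t=A_0$ is $h(t-A_0)h'(t-A_0)$ (and $h^2 h'$ in the second term), and the composed smooth factors $\widetilde w_0(A_0+h^2(\cdot),x)$, $w_0(A_0+h^2(\cdot),x)$ together with all their $t$-derivatives have $S(W)$-norms that are uniformly bounded as $t\to A_0^+$, exactly as argued in the proof above. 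Now $h(t)=o(t^n)$ for all $n$ forces $h^{(j)}(t)=o(t^{n})$ for all $n$ and all $j$ in a one-sided sense at $0$ — this follows since a $C^\infty$ function all of whose derivatives vanish faster than any power must itself and each derivative be $o(t^n)$ for every $n$; alternatively one simply invokes that $h(t)=O(t^{m+1})$ for each $m$ and uses the derivative bound from the previous proof. Hence $\left[h(t-A_0)h'(t-A_0)\right]^{(\ell)}(t)\to 0$ as $t\to A_0^+$ for \emph{every} $\ell\in\N$, not just $\ell\le 2m$, and the Leibniz expansion of $\partial_t^\ell v$ shows $\left\|\partial_t^\ell v(t,\cdot)\right\|_{S(W)}\to0$ as $t\to A_0^+$ for all $\ell$. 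The symmetric computation at $A_1$ gives the same, so $v\in C_p^\infty\bigl([A_0,A_1];S_{g(t)}(W)\bigr)$.

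I do not expect any genuine obstacle here: the corollary is a limiting case of Theorem \ref{th:smoothness} and the entire content is the elementary observation that ``$o(t^n)$ for all $n$'' upgrades the finite-order vanishing $O((t-A_0)^{2m+1})$ of $h(t-A_0)h'(t-A_0)$ to vanishing of all derivatives at the endpoints. The only place requiring a word of care is justifying that $h(t)=o(t^n)$ for all $n$ propagates to its derivatives, but as noted this is immediate from $h\in C^\infty[0,\delta)$ together with the bound $h(t)=O(t^{m+1})$ for arbitrary $m$, which is already subsumed by the hypothesis. Thus the cleanest write-up is simply: apply Theorem \ref{th:smoothness} for every $m\in\N$ and intersect.
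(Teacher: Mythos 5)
Your argument is correct and matches the paper's intent: the paper offers no proof, labelling the statement a direct corollary of Theorem \ref{th:smoothness}, and your observation that $h(t)=o(t^n)$ for all $n$ implies $h(t)=O(t^{m+1})$ for every $m\in\N$, so that one may apply the theorem for each $m$ and intersect the spaces $C_p^{2m}\left([A_0,A_1];S_{g(t)}(W)\right)$, is precisely that deduction. Your more detailed second paragraph, which reruns the endpoint computation and checks that all derivatives of $h(t-A_0)h'(t-A_0)$ vanish at $A_0^+$, is a harmless and correct elaboration of the same point.
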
 

From the smoothness of the function $v(t,\cdot)$, the bounds of the derivatives of $v$ depend on the functions $w$ and $g$, and the order of the derivative.

\begin{theorem}\label{th:regul_trans}
Suppose $w$ satisfies Assumption \ref{asp2}, $v$ is defined by \eqref{eq:integrand}. If $h(t)=O(t^{m+1})$ for some $m\in\N$, then $v$ belongs to the functions space $C_p^{2m}([A_0,A_1];S_{g(t)}(W))\cap C^\infty([A_0,A_1];S_{g(t)}(W))$ with
\high{\begin{equation*}
\left\|\frac{\partial^j}{\partial t^j}v(t,\cdot)\right\|_{S(W)}\leq C(j,g)\vertiii{w}_{S(W)},\quad j=0,1,\dots,2m;
\end{equation*}}
if $h(t)=o(t^n)$ for any $n\in\N$, then $v\in C_p^\infty([A_0,A_1];S_{g(t)}(W))$ with 
\high{\begin{equation*}
\left\|\frac{\partial^j}{\partial t^j}v(t,\cdot)\right\|_{S(W)}\leq C(j,g)\vertiii{w}_{S(W)},\quad j=0,1,\dots,\infty,
\end{equation*}
where $C(j,g)$ is a constant \high{that} depends on $j$ and $g$, where 
\begin{equation*}
\vertiii{w}_{S(W)}:=\|w_0\|_{S(W)}+\|w_1\|_{S(W)}+\|w_2\|_{S(W)}.
\end{equation*}
}
\end{theorem}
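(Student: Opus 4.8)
The statement is the quantitative companion of Theorem~\ref{th:smoothness} and of the corollary that follows it: the membership $v\in C_p^{2m}([A_0,A_1];S_{g(t)}(W))\cap C^\infty([A_0,A_1];S_{g(t)}(W))$ in the first case, and $v\in C_p^\infty([A_0,A_1];S_{g(t)}(W))$ in the second, has already been proved there. What is new is only the derivative bound $\|\partial_t^j v(t,\cdot)\|_{S(W)}\le C(j,g)\vertiii{w}_{S(W)}$. The plan is therefore to revisit the proof of Theorem~\ref{th:smoothness}, this time bookkeeping the constants, and to check that after all differentiations the dependence on $w$ collapses to the three pieces $w_0,w_1,w_2$ alone.

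First I would localize near an endpoint, say $A_0$. As in Theorem~\ref{th:smoothness}, write $w(\alpha,\cdot)=\widetilde w_0(\alpha,\cdot)+\sqrt{\alpha-A_0}\,w_0(\alpha,\cdot)$ with $\widetilde w_0=w_2+\sqrt{\alpha-A_1}\,w_1\in C^\infty([A_0,A_0+\delta];S(W))$, and substitute $\alpha=g(t)$. By Assumption~\ref{th:asp}, $g(t)-A_0=C_0\,h^2(t-A_0)$ on $[A_0,A_0+\delta)$, so $\sqrt{g(t)-A_0}=\sqrt{C_0}\,h(t-A_0)$ and $g'(t)=2C_0\,h(t-A_0)h'(t-A_0)$; the point of the assumption is precisely that the quadratic vanishing of $g(t)-A_0$ absorbs the square root, so that
\begin{equation*}
v(t,\cdot)=2C_0\,\widetilde w_0(g(t),\cdot)\,h(t-A_0)h'(t-A_0)+2C_0^{3/2}\,w_0(g(t),\cdot)\,h^2(t-A_0)h'(t-A_0)
\end{equation*}
is an $S(W)$-valued product of compositions of the $w_i$ with scalar factors built only from $h$ (hence from $g$), with no remaining singularity. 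The symmetric decomposition holds near $A_1$.

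Then I would apply $\partial_t^j$ and expand. The Leibniz rule distributes over the two products; the composition factors $\partial_t^p[\widetilde w_0(g(t),\cdot)]$ and $\partial_t^p[w_0(g(t),\cdot)]$ are expanded by the Fa\`a di Bruno formula into finite sums of $(\partial_\alpha^r\widetilde w_0)(g(t),\cdot)$ and $(\partial_\alpha^r w_0)(g(t),\cdot)$, $0\le r\le p$, multiplied by universal polynomials in $g',g'',\dots$, equivalently in $C_0$ and the derivatives of $h$. Collecting everything, $\partial_t^j v(t,\cdot)$ is a finite sum with combinatorial coefficients of terms of the form $(\partial_\alpha^r w_i)(g(t),\cdot)\,P(t)$ with $i\in\{0,1,2\}$ and $0\le r\le j$, where $P$ is a polynomial in $h(t-A_0),h'(t-A_0),\dots$ whose coefficients depend only on $g$ and whose sup norm is of the type $C(j,g)$. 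Taking $S(W)$-norms and bounding each $\|(\partial_\alpha^r w_i)(g(t),\cdot)\|_{S(W)}$ by $\vertiii{w}_{S(W)}$ (the $\alpha$-derivatives being controlled since $w_0,w_1,w_2\in C^\infty([A_0,A_1];S(W))$, so that $\vertiii{w}_{S(W)}$ is read, up to the finitely many orders that occur, in the corresponding $C^j$ sense) gives the asserted inequality. For the endpoint behaviour, each $P$ still carries the overall factor $h(t-A_0)h'(t-A_0)=O((t-A_0)^{2m+1})$ (resp.\ $o((t-A_0)^n)$ for all $n$), so for $\ell\le 2m$ (resp.\ all $\ell$) the $\ell$-th derivative of that factor tends to $0$ as $t\to A_0^+$, and likewise as $t\to A_1^-$; this reproves the $C_p^{2m}$ (resp.\ $C_p^\infty$) membership quantitatively. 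The case $S(W)=H^n(\Omega^\Lambda_H)$ needs nothing new, as $\partial_t$ ignores the spatial variable and the $H^n$-norm passes through each step.

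The only real difficulty is organizational: keeping the Leibniz/Fa\`a di Bruno expansion of the composition with $g$ under control while simultaneously reading off the exact order of vanishing of the scalar factors at the two endpoints, so that a single computation yields both the uniform bound and the $C_p^{2m}$ (resp.\ $C_p^\infty$) conclusion. A minor point of care is that, since $\partial_t^j v$ involves $\alpha$-derivatives of $w_0,w_1,w_2$ up to order $j$, the norm $\vertiii{w}_{S(W)}$ on the right-hand side must be understood to control those derivatives as well, which is automatic in the analytic setting of the intended application, where the $\alpha$-derivatives obey Cauchy-type estimates. No estimate genuinely beyond Theorem~\ref{th:smoothness} is required.
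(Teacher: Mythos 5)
Your proposal is correct and follows exactly the route the paper intends: the paper gives no separate proof of Theorem~\ref{th:regul_trans}, asserting it as a quantitative restatement of the computation in the proof of Theorem~\ref{th:smoothness} (endpoint decomposition of $w$, substitution $\alpha=g(t)$ so that $g(t)-A_j=C_jh^2(t-A_j)$ absorbs the square root, then Leibniz/Fa\`a di Bruno), which is precisely what you carry out. You also rightly flag the one imprecision in the statement itself, namely that since $\partial_t^j v$ involves $\partial_\alpha^r w_i$ for $r\le j$, the quantity $\vertiii{w}_{S(W)}$ must be read as controlling those $\alpha$-derivatives (as the paper implicitly does when it later replaces it by $\|v\|_{C^{2m+1}([A_0,A_1];S(W))}$); with that reading your argument is complete.
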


\subsection{Numerical method for the inverse Bloch transform}

In this subsection, we will analyse the new numerical method of the inverse Bloch transform. Consider the definite integral
\begin{equation*}
\int_{A_0}^{A_1} v(t,\cdot)\d t=\int_{A_0}^{A_1}w(\alpha,\cdot)\d\alpha,
\end{equation*} 
where $w$ satisfies Assumption \ref{asp2} and $v$ is defined by \eqref{eq:integrand}. \high{We will approximate the function $v$ by trigonometrical interpolation (for references see \cite{Atkin1989}), and then study the convergence of the numerical integration based on the approximation.} Let $[A_0,A_1]$ be divided uniformly into $N$ subintervals, where $N$ is assumed to be even in this paper. Let the uniformly located grid points
\begin{equation}\label{eq:gridpoitns}
t_j=A_0+\frac{A_1-A_0}{N}j, \quad j=1,\dots,N, 
\end{equation}
and define the basic functions by
\begin{equation}\label{eq:basis}
\phi_N^{(m)}(t)=\frac{1}{N}\sum_{l=-N/2+1}^{N/2}\exp\left(\i l (t-t_m)\frac{2\pi}{A_1-A_0} \right),\,j=1,2,\dots,N,
\end{equation}
then $\phi_N^{(m)}(t_\ell)=\delta_{\ell,m}$, where
\begin{equation*}
\delta_{\ell,m}=\begin{cases}
1,\quad\ell=m;\\
0,\quad\ell\neq m.
\end{cases}
\end{equation*} 

\begin{lemma}\label{th:basis_fourier}
The basic functions $\left\{\phi_N^{(m)}\right\}_{m=1}^N$  are orthogonal.
\end{lemma}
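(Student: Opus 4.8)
The plan is to verify orthogonality directly by computing the inner product $\int_{A_0}^{A_1}\phi_N^{(m)}(t)\overline{\phi_N^{(m')}(t)}\,\d t$ and showing it equals zero whenever $m\neq m'$ (and a positive constant when $m=m'$). The main tool is the orthogonality of the complex exponentials on an interval of length equal to a period, so the first step is to record that for integers $l,l'$ with $|l|,|l'|\le N/2$,
\begin{equation*}
\int_{A_0}^{A_1}\exp\left(\i(l-l')(t-t_m)\tfrac{2\pi}{A_1-A_0}\right)\d t = (A_1-A_0)\,\delta_{l,l'},
\end{equation*}
since $\exp\left(\i(l-l')\tfrac{2\pi}{A_1-A_0}(t-t_m)\right)$ is $(A_1-A_0)$-periodic in $t$ and has zero mean over one period unless $l=l'$.

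Next I would expand the product using the definition \eqref{eq:basis}: writing $\phi_N^{(m)}(t)=\frac{1}{N}\sum_{l}e^{\i l(t-t_m)2\pi/(A_1-A_0)}$ and $\overline{\phi_N^{(m')}(t)}=\frac{1}{N}\sum_{l'}e^{-\i l'(t-t_{m'})2\pi/(A_1-A_0)}$, the inner product becomes
\begin{equation*}
\frac{1}{N^2}\sum_{l=-N/2+1}^{N/2}\sum_{l'=-N/2+1}^{N/2}e^{-\i l t_m 2\pi/(A_1-A_0)}e^{\i l' t_{m'}2\pi/(A_1-A_0)}\int_{A_0}^{A_1}e^{\i(l-l')t\,2\pi/(A_1-A_0)}\d t.
\end{equation*}
Applying the exponential orthogonality collapses the double sum to the diagonal $l=l'$, leaving
\begin{equation*}
\frac{A_1-A_0}{N^2}\sum_{l=-N/2+1}^{N/2}e^{\i l (t_{m'}-t_m)2\pi/(A_1-A_0)}.
\end{equation*}
Since $t_{m'}-t_m=(m'-m)(A_1-A_0)/N$, the exponent is $2\pi\i l(m'-m)/N$, so the remaining sum is a geometric-type sum $\sum_{l=-N/2+1}^{N/2}\zeta^{l}$ with $\zeta=e^{2\pi\i(m'-m)/N}$; for $m\neq m'$ with $|m-m'|<N$ one has $\zeta\neq 1$ and $\zeta^N=1$, so this sum vanishes, giving orthogonality, while for $m=m'$ it equals $N$, giving $\|\phi_N^{(m)}\|^2_{L^2}=(A_1-A_0)/N$.

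The computation is entirely routine; the only minor care needed is the slightly asymmetric summation range $l=-N/2+1,\dots,N/2$ (valid because $N$ is even), which must be handled when arguing that $\sum_l \zeta^l=0$ — one can multiply by $\zeta$ and use $\zeta^N=1$ to see the sum is invariant under $l\mapsto l+1$ up to the endpoint terms, which cancel precisely because $\zeta^{N/2}=\zeta^{-N/2+... }$ relation holds, or simply factor $\zeta^{-N/2+1}$ out to reduce to a standard geometric series $\sum_{j=0}^{N-1}\zeta^j=0$. I do not anticipate a genuine obstacle here; the statement is a standard fact about the discrete Fourier / trigonometric interpolation basis, and the proof is a short direct calculation.
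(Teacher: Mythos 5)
Your proposal is correct and follows essentially the same route as the paper: expand both basis functions via \eqref{eq:basis}, use orthogonality of the exponentials over the period $A_1-A_0$ to collapse the double sum to the diagonal, and evaluate the resulting geometric sum $\sum_{\ell}e^{2\pi\i\ell(m'-m)/N}$, which vanishes for $m\neq m'$ and equals $N$ for $m=m'$, yielding $\|\phi_N^{(m)}\|_{L^2}^2=(A_1-A_0)/N$. Your extra care with the asymmetric index range $\ell=-N/2+1,\dots,N/2$ is a detail the paper glosses over, but it changes nothing essential.
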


\begin{proof}
For any $j,j'\in\left\{1,\dots,N\right\}$, the inner product shows
\begin{equation*}
\begin{aligned}
&\int_{A_0}^{A_1} \phi^{(j)}_N(t)\overline{\phi^{(j')}_N}(t)\d t\\
=&\frac{1}{N^2}\int_{A_0}^{A_1} \sum_{\ell=-N/2+1}^{N/2}\sum_{\ell'=-N/2+1}^{N/2}\exp\left(\i \ell (t-t_j)\frac{2\pi}{A_1-A_0} \right)\exp\left(-\i \ell' (t-t_{j'})\frac{2\pi}{A_1-A_0} \right)\d t\\
=&\frac{A_1-A_0}{N^2}\sum_{\ell=-N/2+1}^{N/2}\exp\left(\i\ell(t_{j'}-t_j)\frac{2\pi}{A_1-A_0}\right)=\frac{A_1-A_0}{N^2}\sum_{\ell=-N/2+1}^{N/2}\exp\left(\i\ell (j'-j)\frac{2\pi}{N}\right)
\end{aligned},
\end{equation*} 
then the integral equals to 0 if and only if $j\neq j'$. Thus
\begin{equation*}
\int_{A_0}^{A_1} \phi^{(j)}_N(t)\overline{\phi^{(j')}_N}(t)\d t=\frac{A_1-A_0}{N}\delta_{j,j'}.
\end{equation*}
The basic functions $\left\{\phi_N^{(m)}\right\}_{m=1}^N$  are orthogonal. The proof is finished.
\end{proof}

Let $v_N$ be the interpolation of $v$ in the space ${\rm span}\left\{\phi_N^{(m)}\right\}_{m=1}^N$, with the representation
\begin{equation}\label{eq:interpolation}
v_N(t,\cdot)=\sum_{\ell=1}^N \phi^{(\ell)}_N(t)v(t_\ell,\cdot),
\end{equation}
then  $v_N(t_\ell,\cdot)=v(t_\ell,\cdot)\in S(W)$ for $\ell=1,2,\dots,N$. Then the definite integration 
\begin{equation*}
\int_{A_0}^{A_1}v_N(t,\cdot)\d t=\sum_{\ell=1}^N v(t_\ell,\cdot)\int_{A_0}^{A_1}\phi^{(\ell)}_N(t)\d t=\frac{{A_1}-{A_0}}{N}\sum_{j=1}^N v(t_j,\cdot),
\end{equation*}
which coincides with the formula of the trapezoidal rule. 

We will study the error estimate of the trapezoidal rule of the integration of $v$. Let's begin with the Fourier series of the function $v(t,\cdot)$.  A classical Minkowski integral inequality is needed, see Theorem 202, \cite{Hardy1988}.

\begin{lemma}\label{th:minkowski}
Suppose $(S_1,\mu_1)$ and $(S_2,\mu_2)$ are two measure spaces and $F:\,S_1\times S_2\rightarrow \R$ is measurable. Then the following inequality holds for any $p\geq 1$
\begin{equation*}
\left[\int_{S_2}\left|\int_{S_1}F(y,z)\d \mu_1(y)\right|^p\d \mu_2(z)\right]^{1/p}\leq \int_{S_1}\left(\int_{S_2}\left|F(y,z)\right|^p\d \mu_2(z)\right)^{1/p}\d\mu_1(y)
\end{equation*}
\end{lemma}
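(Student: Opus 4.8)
The plan is to establish the Minkowski integral inequality for $p\geq 1$ directly, since it is the classical statement cited from Hardy--Littlewood--P\'olya. I would first dispose of the trivial cases: if $p=1$ the inequality is in fact an equality (up to the absolute value) and follows immediately from Tonelli/Fubini for the nonnegative function $|F|$; and if the right-hand side is infinite there is nothing to prove. So assume $1<p<\infty$ and that $\int_{S_1}\bigl(\int_{S_2}|F(y,z)|^p\,d\mu_2(z)\bigr)^{1/p}\,d\mu_1(y)=:M<\infty$.

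The main step is the standard duality argument. Write $G(z):=\int_{S_1}F(y,z)\,d\mu_1(y)$ (well-defined a.e.\ by the finiteness of $M$, after first checking $\int_{S_1}|F(y,z)|\,d\mu_1(y)<\infty$ for $\mu_2$-a.e.\ $z$, which itself follows from $M<\infty$ and Minkowski applied formally, or more cleanly by noting the desired inequality first for $|F|$ in place of $F$). Let $q=p/(p-1)$ be the conjugate exponent. Then
\begin{equation*}
\int_{S_2}|G(z)|^p\,d\mu_2(z)=\int_{S_2}|G(z)|^{p-1}|G(z)|\,d\mu_2(z)\leq\int_{S_2}|G(z)|^{p-1}\int_{S_1}|F(y,z)|\,d\mu_1(y)\,d\mu_2(z).
\end{equation*}
By Tonelli's theorem the iterated integral on the right equals $\int_{S_1}\int_{S_2}|G(z)|^{p-1}|F(y,z)|\,d\mu_2(z)\,d\mu_1(y)$, and for each fixed $y$ I apply H\"older's inequality in the variable $z$ with exponents $q$ and $p$:
\begin{equation*}
\int_{S_2}|G(z)|^{p-1}|F(y,z)|\,d\mu_2(z)\leq\left(\int_{S_2}|G(z)|^{(p-1)q}\,d\mu_2(z)\right)^{1/q}\left(\int_{S_2}|F(y,z)|^p\,d\mu_2(z)\right)^{1/p}.
\end{equation*}
Since $(p-1)q=p$, the first factor is $\bigl(\int_{S_2}|G|^p\,d\mu_2\bigr)^{1/q}$, which is independent of $y$ and can be pulled out of the $y$-integral, leaving exactly $M$ as the remaining integral over $S_1$. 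Hence $\|G\|_p^p\leq\|G\|_p^{p/q}\,M$.

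Finally I would divide through by $\|G\|_p^{p/q}$ to conclude $\|G\|_p^{p-p/q}=\|G\|_p\leq M$, which is the claimed inequality. The one point requiring care — and the place I expect to have to be slightly delicate — is the division step: it is only legitimate when $0<\|G\|_p<\infty$. If $\|G\|_p=0$ the inequality is trivial; to rule out $\|G\|_p=\infty$ a priori, one argues by truncation, replacing $S_2$ by sets of finite measure on which $|G|$ is bounded (or replacing $|G|^{p-1}$ by $\min\{|G|,n\}^{p-1}\mathbf{1}_{E_n}$ for an exhausting sequence $E_n$ of finite-measure sets), running the above chain of inequalities to get a uniform bound $\bigl(\int_{E_n}\min\{|G|,n\}^p\bigr)^{1/p}\leq M$, and then letting $n\to\infty$ via the monotone convergence theorem. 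Apart from this standard measure-theoretic bookkeeping the proof is a short application of H\"older and Tonelli.
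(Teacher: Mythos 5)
Your proof is correct. Note, however, that the paper does not actually prove this lemma: it is quoted verbatim as Theorem 202 of Hardy--Littlewood--P\'olya and used as a black box, so there is no in-paper argument to compare against. What you have written is the standard duality proof of Minkowski's integral inequality --- bound $|G(z)|=\bigl|\int_{S_1}F(y,z)\,d\mu_1(y)\bigr|$ by $\int_{S_1}|F(y,z)|\,d\mu_1(y)$, write $\|G\|_p^p=\int|G|^{p-1}|G|$, swap the order of integration by Tonelli, apply H\"older in $z$ with conjugate exponents $q=p/(p-1)$ and $p$, and divide by $\|G\|_p^{p/q}$ --- and it is essentially the argument found in that reference. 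You correctly isolate the two delicate points (well-definedness of $G$, and the legitimacy of dividing by $\|G\|_p^{p/q}$) and your fixes (proving the inequality first for $|F|$, and truncating to finite-measure sets followed by monotone convergence) are the right ones. The only caveat worth recording is that both Tonelli and your exhaustion argument implicitly require the measure spaces to be $\sigma$-finite (or at least that $F$ is supported on a $\sigma$-finite set); this hypothesis is absent from the statement as the paper gives it, but it is harmless here since in the application $S_1=[A_0,A_1]$ and $S_2=\Omega^\Lambda_H$ with Lebesgue measure.
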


\begin{lemma}\label{th:Fourier}
\high{Suppose $v$ belongs to $\in C_p^{2m}([A_0,A_1];S_{g(t)}(W))\cap C^\infty([A_0,A_1];S_{g(t)}(W))$}. If $h=O(t^{m+1})$ as $t\rightarrow 0^+$ for some positive integer $m$, then $v$ has the Fourier series with respect to $t$ has the form
 \begin{equation*}
v(t,x)=\sum_{j\in\N}\widehat{v}_j(x)e^{\i j t\frac{2\pi}{A_1-A_0}},
\end{equation*}
where $\widehat{v}_j\in S(W)$ and its norm for $j\in\N^+$
\begin{equation*}
\|\widehat{v}_j\|_{S(W)}\leq\high{\left|\frac{A_1-A_0}{2\pi j}\right|^{2m+1}\|v\|_{C^{2m+1}([A_0,A_1];S(W))}}.
\end{equation*}
\end{lemma}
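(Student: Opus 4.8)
The plan is to compute the Fourier coefficients $\widehat{v}_j$ directly from the definition as integrals and then use repeated integration by parts, exploiting the periodicity/vanishing boundary conditions encoded in the space $C_p^{2m}$. First I would write
\[
\widehat{v}_j(x) = \frac{1}{A_1-A_0}\int_{A_0}^{A_1} v(t,x)\, e^{-\i j t \frac{2\pi}{A_1-A_0}}\d t,
\]
which is a bona fide $S(W)$-valued Bochner integral since $v \in C^0([A_0,A_1];S_{g(t)}(W))$; this makes sense of the Fourier series statement. For the decay estimate, fix $j \neq 0$ and integrate by parts $2m+1$ times in $t$, differentiating $v$ and integrating the exponential. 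Each integration by parts produces a boundary term of the form $\left[\partial_t^\ell v(t,x)\, c_{j,\ell}\, e^{-\i j t \frac{2\pi}{A_1-A_0}}\right]_{A_0}^{A_1}$ with $|c_{j,\ell}| \le \left(\frac{A_1-A_0}{2\pi j}\right)^{\ell+1}$, and since $v \in C_p^{2m}([A_0,A_1];S_{g(t)}(W))$ we have $\partial_t^\ell v(A_0,\cdot) = \partial_t^\ell v(A_1,\cdot) = 0$ in $S(W)$ for $\ell = 0,1,\dots,2m$, so all these boundary terms vanish. After $2m+1$ steps one is left with
\[
\widehat{v}_j(x) = \left(\frac{A_1-A_0}{-\,2\pi\i j}\right)^{2m+1}\frac{1}{A_1-A_0}\int_{A_0}^{A_1} \partial_t^{2m+1} v(t,x)\, e^{-\i j t \frac{2\pi}{A_1-A_0}}\d t.
\]

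Next I would take $S(W)$-norms. Here the key technical tool is Lemma \ref{th:minkowski}: applying the Minkowski integral inequality with $p = 2$ (or the appropriate exponent for the Sobolev norm $S(W)$), one pushes the norm inside the integral,
\[
\|\widehat{v}_j\|_{S(W)} \le \left|\frac{A_1-A_0}{2\pi j}\right|^{2m+1}\frac{1}{A_1-A_0}\int_{A_0}^{A_1}\left\|\partial_t^{2m+1} v(t,x)\right\|_{S(W)}\left|e^{-\i j t \frac{2\pi}{A_1-A_0}}\right|\d t.
\]
The exponential has modulus $1$, and $\|\partial_t^{2m+1} v(t,\cdot)\|_{S(W)} \le \|v\|_{C^{2m+1}([A_0,A_1];S(W))}$ by definition of that norm (this is where $v \in C^\infty([A_0,A_1];S_{g(t)}(W))$ is used — the $(2m+1)$-st derivative is continuous and bounded, which is guaranteed by Theorem \ref{th:smoothness} / Theorem \ref{th:regul_trans}). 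The remaining integral over $[A_0,A_1]$ of this constant bound cancels the $\frac{1}{A_1-A_0}$ factor, yielding exactly
\[
\|\widehat{v}_j\|_{S(W)} \le \left|\frac{A_1-A_0}{2\pi j}\right|^{2m+1}\|v\|_{C^{2m+1}([A_0,A_1];S(W))}.
\]

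Finally I would note that convergence of the Fourier series to $v$ itself (the first displayed equation in the statement) follows from the classical theory for scalar-valued smooth periodic functions applied coordinate-wise, or more cleanly from the decay estimate just proved: since $\sum_j \|\widehat{v}_j\|_{S(W)} < \infty$ once $2m+1 \ge 2$, the series converges absolutely in $S(W)$, and it converges to $v$ because it does so at every grid point and, by continuity and density of trigonometric polynomials, everywhere. The main obstacle, such as it is, is bookkeeping rather than conceptual: one must be careful that the "periodicity" built into $C_p^{2m}$ is exactly the vanishing of the derivatives at the endpoints (not genuine periodic matching), so the integration-by-parts boundary terms vanish for the right reason, and one must correctly justify interchanging $\partial_t^{2m+1}$ with the Bochner integral and applying Minkowski's inequality in the $S(W) = H^n(\Omega^\Lambda_H)$ setting — but all of this is routine given Theorem \ref{th:regul_trans} and Lemma \ref{th:minkowski}.
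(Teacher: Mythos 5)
Your proposal is correct and follows essentially the same route as the paper: both compute $\widehat{v}_j$ by integrating by parts $2m+1$ times (the boundary terms involving $\partial_t^\ell v$ for $\ell=0,\dots,2m$ vanishing precisely because of the $C_p^{2m}$ condition) and then apply the Minkowski integral inequality of Lemma \ref{th:minkowski} to bound the resulting integral by $(A_1-A_0)\|v\|_{C^{2m+1}([A_0,A_1];S(W))}$. The paper likewise treats $S(W)=L^2(\Omega^\Lambda_H)$ first and remarks that $S(W)=H^n$ is analogous, so no substantive difference remains.
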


\begin{proof}\high{First, consider the case that $S(W)=L^2(\Omega^\Lambda_H)$.}
As $v(t,\cdot)\in C_p^{2m}([A_0,A_1];S_{g(t)}(W))\cap C^\infty([A_0,A_1];S_{g(t)}(W))$,  for any $j=0,\dots,2m$, $\frac{\partial^j}{\partial t^j}v(t,x)\in L^2([A_0,A_1];S(W))$. So the Fourier series of $v(t,x)$ with respect to $t$ exists.  From Theorem \ref{th:smoothness}, the function $v\in C_p^{2m}\left([A_0,A_1];S(W)\right)$. By integration by part, for $j\neq 0$
\begin{equation*}
\begin{aligned}
\widehat{v}_j(x)&=\frac{1}{A_1-A_0}\int_{A_0}^{A_1} v(t,x) e^{-\i j t\frac{2\pi}{A_1-A_0}}\d t\\
&=\frac{1}{A_1-A_0}\left(\frac{A_1-A_0}{2\i\pi j }\right)^{2m+1}\int_{A_0}^{A_1}\frac{d^{2m+1}}{d t^{2m+1}}v(t,x)e^{-\i j t\frac{2\pi}{A_1-A_0}}\d t,
\end{aligned}
\end{equation*} 
then from Lemma \ref{th:minkowski},
\begin{equation*}
\begin{aligned}
&\left\|\int_{A_0}^{A_1}\frac{d^{2m+1}}{d t^{2m+1}}v(t,x)e^{-\i j t\frac{2\pi}{A_1-A_0}}\d t\right\|_{S(W)}\\
=&\left(\int_{\Omega^\Lambda_H}\left|\int_{A_0}^{A_1}\frac{d^{2m+1}}{d t^{2m+1}}v(t,x)e^{-\i j t\frac{2\pi}{A_1-A_0}}\d t\right|^2\d x\right)^{1/2}\\
\leq& \int_{A_0}^{A_1}\left(\int_{\Omega^\Lambda_H} \left|\frac{d^{2m+1}}{d t^{2m+1}}v(t,x)e^{-\i j t\frac{2\pi}{A_1-A_0}}\right|^2\d x\right)^{1/2}\d t\\
\leq& \int_{A_0}^{A_1}\left\|\frac{d^{2m+1}}{dt^{2m+1}}v(t,\cdot)\right\|_{S(W)}\d t\leq (A_1-A_0)\high{\|v\|_{C^{2m+1}([A_0,A_1];S(W))}},
\end{aligned}
\end{equation*}
then
\begin{equation*}
\left\|\widehat{v}_j(x)\right\|_{S(W)}\leq \left|\frac{A_1-A_0}{2\pi j}\right|^{2m+1}\high{\|v\|_{C^{2m+1}([A_0,A_1];S(W))}}.
\end{equation*}

\high{The case that $S(W)=H^n(\Omega^\Lambda_H)$ for any $n\in\N$ could be proved similarly, thus we omit the proof here.} The proof is finished.
\end{proof}

A direct corollary of the lemma shows the case that $h(t)=o(t^n)$ for any $n\in\N$. 
\begin{corollary}\label{th:Fourier1}
If $h=o(t^n)$ as $t\rightarrow 0^+$ for any positive integer $n$, then $v$ has the Fourier series with respect to $t$ has the form
 \begin{equation}
v(t,x)=\sum_{j\in\N}\widehat{v}_j(x)e^{\i j t\frac{2\pi}{A_1-A_0}},
\end{equation}
where $\widehat{v}_j\in S(W)$ and its norm
\begin{equation}
\|\widehat{v}_j\|_{S(W)}=o(j^{-2n}).
\end{equation}
\end{corollary}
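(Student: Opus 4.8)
The plan is to reduce Corollary \ref{th:Fourier1} directly to the mechanism of Lemma \ref{th:Fourier}, but now using the fact that $v\in C_p^\infty([A_0,A_1];S_{g(t)}(W))$ rather than merely $C_p^{2m}$. First I would recall from the Corollary following Theorem \ref{th:smoothness} (or equivalently from Theorem \ref{th:regul_trans}) that when $h(t)=o(t^n)$ for every $n\in\N$, the integrand $v$ lies in $C_p^\infty([A_0,A_1];S_{g(t)}(W))$; in particular every $t$-derivative $\partial_t^\ell v(t,\cdot)$ belongs to $L^2([A_0,A_1];S(W))$ and vanishes at both endpoints $A_0$ and $A_1$ in the $S(W)$-norm. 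Hence the Fourier series $v(t,x)=\sum_{j\in\N}\widehat{v}_j(x)e^{\i j t\frac{2\pi}{A_1-A_0}}$ with $\widehat{v}_j\in S(W)$ exists, which gives the first displayed assertion of the corollary.

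Next I would bound $\|\widehat{v}_j\|_{S(W)}$. For any fixed positive integer $p$ and any $j\neq 0$, integrate by parts $p$ times in the definition of $\widehat{v}_j$; the boundary terms all drop because $v\in C_p^\infty$ means $\partial_t^\ell v$ vanishes at $A_0$ and $A_1$ for all $\ell<p$. This yields
\begin{equation*}
\widehat{v}_j(x)=\frac{1}{A_1-A_0}\left(\frac{A_1-A_0}{2\i\pi j}\right)^{p}\int_{A_0}^{A_1}\frac{d^{p}}{dt^{p}}v(t,x)\,e^{-\i j t\frac{2\pi}{A_1-A_0}}\d t,
\end{equation*}
and then, exactly as in the proof of Lemma \ref{th:Fourier}, applying the Minkowski integral inequality (Lemma \ref{th:minkowski}) to pull the $S(W)=L^2(\Omega^\Lambda_H)$-norm inside the $t$-integral gives
\begin{equation*}
\|\widehat{v}_j\|_{S(W)}\leq \left|\frac{A_1-A_0}{2\pi j}\right|^{p}\|v\|_{C^{p}([A_0,A_1];S(W))}.
\end{equation*}
The right-hand side is finite for every $p$ because $v\in C^\infty([A_0,A_1];S_{g(t)}(W))$. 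The general case $S(W)=H^n(\Omega^\Lambda_H)$ follows by the same computation applied componentwise to the relevant derivatives, as in Lemma \ref{th:Fourier}.

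Finally I would convert this "decay faster than any algebraic rate" into the stated form $\|\widehat{v}_j\|_{S(W)}=o(j^{-2n})$: given $n$, apply the inequality above with $p=2n+1$, so that $\|\widehat{v}_j\|_{S(W)}\leq C_{n}\,j^{-(2n+1)}=o(j^{-2n})$ as $j\to\infty$, with $C_n=\left(\frac{A_1-A_0}{2\pi}\right)^{2n+1}\|v\|_{C^{2n+1}([A_0,A_1];S(W))}$ a finite constant. Since $n$ was arbitrary, this establishes the claim. The only point that needs a little care—and the closest thing to an obstacle—is making sure the endpoint boundary terms genuinely vanish when integrating by parts an arbitrary number of times; this is precisely the content of the $C_p^\infty$ membership (the periodic-type vanishing conditions in the definition of $C_p^n$), so it is available without further work. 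Everything else is a routine repetition of the estimates in Lemma \ref{th:Fourier} with $2m+1$ replaced by an arbitrary exponent.
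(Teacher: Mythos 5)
Your proposal is correct and follows essentially the same route the paper intends: the paper gives no explicit proof, calling this a ``direct corollary'' of Lemma \ref{th:Fourier}, and your argument is exactly the natural expansion of that — use the $C_p^\infty$ membership to integrate by parts an arbitrary number of times (the endpoint vanishing from the definition of $C_p^n$ killing the boundary terms), apply the Minkowski inequality as in the lemma, and take $p=2n+1$ to conclude $\|\widehat{v}_j\|_{S(W)}=o(j^{-2n})$. No gaps.
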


Then we can get the error between the exact function $v$ and its Fourier interpolation $v_N$ defined in \eqref{eq:interpolation} for $\alpha\in\Wast$, when $N$ is large enough.

\begin{theorem}\label{th:interpolation}
If $v\in \in C_p^{2n}([A_0,A_1];S_{g(t)}(W))\cap C^\infty([A_0,A_1];S_{g(t)}(W))$ for some $n\in\N$, then the  difference between $v(t,\cdot)$ and its approximation $v_N(t,\cdot)$ is bounded by
\begin{equation}
\|v-v_N\|_{L^2([A_0,A_1];S(W))}\leq\high{CN^{-2n-1/2}\|v\|_{C_p^{2m}([A_0,A_1];S(W))}}.
\end{equation}
\end{theorem}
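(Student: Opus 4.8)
The plan is to estimate the interpolation error by splitting it into a truncation error (tail of the Fourier series) and an aliasing error, a standard approach for trigonometric interpolation of periodic functions. First I would record the Fourier expansion $v(t,x)=\sum_{j\in\Z}\widehat{v}_j(x)e^{\i j t\frac{2\pi}{A_1-A_0}}$ guaranteed by Lemma \ref{th:Fourier}, together with the decay bound $\|\widehat{v}_j\|_{S(W)}\leq C\,|j|^{-2n-1}\,\|v\|_{C^{2n+1}([A_0,A_1];S(W))}$ for $j\neq 0$. Since $v\in C_p^{2n}\cap C^\infty$, this decay is legitimate and, by the same Minkowski-integral argument used in Lemma \ref{th:Fourier}, one may in fact push to the $(2n+1)$-st derivative, which is exactly what the stated exponent $-2n-1/2$ requires. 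I would denote by $P_N v=\sum_{|j|\leq N/2}\widehat{v}_j(x)e^{\i j t\frac{2\pi}{A_1-A_0}}$ the truncated series.

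The key steps, in order: (i) bound the truncation error $\|v-P_Nv\|_{L^2([A_0,A_1];S(W))}$ by Parseval's identity in $t$ together with Lemma \ref{th:minkowski} to interchange the $S(W)$-norm and the $\ell^2$-sum over $j$, giving $\|v-P_Nv\|_{L^2([A_0,A_1];S(W))}^2\leq (A_1-A_0)\sum_{|j|>N/2}\|\widehat{v}_j\|_{S(W)}^2\leq C\Big(\sum_{|j|>N/2}|j|^{-4n-2}\Big)\|v\|_{C^{2n+1}}^2\leq C N^{-4n-1}\|v\|_{C^{2n+1}}^2$; (ii) identify $v_N$ with $P_Nv$ plus aliasing: since $v_N$ interpolates $v$ at the nodes $t_\ell$ and the $\phi_N^{(\ell)}$ span the same space as $\{e^{\i j t\frac{2\pi}{A_1-A_0}}\}_{|j|\leq N/2}$, the discrete Fourier coefficients of $v_N$ are $\sum_{q\in\Z}\widehat{v}_{j+qN}$, so $P_Nv-v_N=\sum_{|j|\leq N/2}\big(\sum_{q\neq 0}\widehat{v}_{j+qN}\big)e^{\i j t\frac{2\pi}{A_1-A_0}}$; (iii) bound the aliasing error again via Parseval and Minkowski, $\|P_Nv-v_N\|_{L^2([A_0,A_1];S(W))}^2\leq (A_1-A_0)\sum_{|j|\leq N/2}\big(\sum_{q\neq 0}\|\widehat{v}_{j+qN}\|_{S(W)}\big)^2$, and estimate the inner sum by $\sum_{q\neq 0}C|j+qN|^{-2n-1}\|v\|_{C^{2n+1}}\leq C N^{-2n-1}\|v\|_{C^{2n+1}}$ for $|j|\leq N/2$ (comparing with an integral), so the whole aliasing term is $\leq C N\cdot N^{-4n-2}\|v\|_{C^{2n+1}}^2=C N^{-4n-1}\|v\|_{C^{2n+1}}^2$; (iv) add (i) and (iii), take square roots, and absorb $\|v\|_{C^{2n+1}}$ into $\|v\|_{C_p^{2n}}$ (or simply into $\vertiii{w}_{S(W)}$ via Theorem \ref{th:regul_trans}) to conclude $\|v-v_N\|_{L^2([A_0,A_1];S(W))}\leq C N^{-2n-1/2}\|v\|_{C_p^{2n}([A_0,A_1];S(W))}$.

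The main obstacle I anticipate is step (iii), the aliasing estimate: one must be careful that, for the frequencies $|j|\leq N/2$ contributing to $v_N$, the shifted frequencies $j+qN$ with $q\neq 0$ stay at distance $\gtrsim N$ from the origin, so that the decay bound $|j+qN|^{-2n-1}$ from Lemma \ref{th:Fourier} is summable in $q$ with the right power of $N$; the endpoint case $|j|=N/2$ and the sign of $q$ need a small amount of bookkeeping. A secondary technical point is reconciling the exponent: Lemma \ref{th:Fourier} as stated gives only $|j|^{-2m-1}$ from $2m$ periodic derivatives, but the claimed rate $N^{-2n-1/2}$ needs the $(2n+1)$-st derivative in the constant — this is exactly what the $C^\infty$ membership of $v$ supplies, so I would explicitly invoke $v\in C^\infty([A_0,A_1];S_{g(t)}(W))$ (hence $v\in C^{2n+1}$) when applying the integration-by-parts bound. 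Everything else is the routine Parseval/Minkowski machinery already set up in Lemmas \ref{th:minkowski} and \ref{th:Fourier}.
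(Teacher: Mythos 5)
Your proposal is correct, and it is in fact more careful than the argument the paper actually gives. The paper's proof treats $v_N$ as the $L^2$-orthogonal projection of $v$ onto ${\rm span}\{e^{\i j t\frac{2\pi}{A_1-A_0}}\}_{j=-N/2+1}^{N/2}$, so that $v-v_N$ is exactly the tail $\sum_{j\notin[-N/2+1,N/2]}\widehat v_j e^{\i j t\frac{2\pi}{A_1-A_0}}$, and then it only runs your step (i): Parseval plus the coefficient decay from Lemma \ref{th:Fourier} gives $CN^{-4n-1}$ for the squared norm. It never addresses the fact that $v_N$, as defined in \eqref{eq:interpolation}, is the trigonometric \emph{interpolant} at the nodes $t_\ell$, which differs from the projection precisely by the aliasing term $\sum_{|j|\le N/2}\bigl(\sum_{q\neq 0}\widehat v_{j+qN}\bigr)e^{\i j t\frac{2\pi}{A_1-A_0}}$ that you estimate in steps (ii)--(iii). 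Your decomposition is the standard, rigorous way to handle this, and since the aliased frequencies satisfy $|j+qN|\gtrsim |q|N$ for $|j|\le N/2$, the aliasing contribution is of the same order $N^{-4n-1}$ (for $n\ge 1$; for $n=0$ one should use one extra derivative, which the $C^\infty$ hypothesis supplies), so the final rate is unchanged. In short: your route costs one extra estimate but actually proves the statement for the interpolant as defined; the paper's shorter route is only literally valid if one reads $v_N$ as the spectral truncation. Your observation about needing the $(2n+1)$-st derivative in the constant (legitimate because the boundary terms in the integration by parts only involve derivatives up to order $2n$, which vanish by the $C_p^{2n}$ membership) matches what the paper does in Lemma \ref{th:Fourier}.
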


\begin{proof}\high{First, let $S(W)=L^2(\Omega^\Lambda_H)$.}
From the definition of $v_N(t,\cdot)$, \high{i.e., the projection of $v$ into the subspace ${\rm span}\left\{c_n(x)\exp\left[\i n\frac{2\pi}{A_1-A_0}\right],\,n=-N/2+1,\cdots,N/2,\,c_n\in S(W)\right\}$},
\begin{equation*}
\begin{aligned}
\left\| v- v_N\right\|^2_{L^2([A_0,A_1];S(W))}
=& \int_{A_0}^{A_1}\int_{\Omega^\Lambda_H}\left|v-v_N\right|^2\d x\d t\\
=&\int_{A_0}^{A_1}\int_{\Omega^\Lambda_H}\left|\sum_{j\in\Z\setminus[-N/2+1,N/2]} \widehat{v}_l e^{\i j t\frac{2\pi}{A_1-A_0}}\right|^2\d x\d t\\
\leq & (A_1-A_0)\sum_{j\in\Z\setminus[-N/2+1,N/2]}\left\| \widehat{v}_j\right\|_{S(W)}^2.
\end{aligned}
\end{equation*}
With the result of Lemma \ref{th:Fourier},
\begin{equation*}
\begin{aligned}
\left\|v-v_N\right\|^2_{L^2([A_0,A_1],S(W))}
&\leq (A_1-A_0)\sum_{j\in\Z\setminus[-N/2+1,N/2]}\left|\frac{A_1-A_0}{2\pi j}\right|^{4n+2}\|v\|^2_{C^{2n+1}([A_0,A_1];S(W))}\\
&\leq CN^{-4n-1}\|v\|^2_{C_p^{2m}([A_0,A_1];S(W))}.
\end{aligned}
\end{equation*} 

\high{The case that $S(W)=H^n(\Omega^\Lambda_H)$ for any $n\in\N$ could be proved similarly, thus we omit the proof here.}
 The proof is finished.

\end{proof}

The error estimation of the trapezoidal rule will be shown in the following theorem.

\begin{theorem}\label{th:invBloch_num} If $v\in  C_p^{2n}([A_0,A_1];S_{g(t)}(W))\cap C^{\infty}([A_0,A_1];S_{g(t)}(W))$ for some $n\in\N$, then the error of the numerical integration of $S$ is bounded by
\begin{equation}
\left\|\int_{A_0}^{A_1}(v-v_N)(t,\cdot)\d t\right\|_{S(W)}\leq\high{CN^{-2n-1/2}\|v\|_{C^{2n+1}([A_0,A_1];S(W))}}.
\end{equation}
\end{theorem}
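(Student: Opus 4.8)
The plan is to deduce the integral error bound directly from the interpolation error bound in Theorem \ref{th:interpolation}, using the Cauchy--Schwarz inequality in the $t$-variable. First I would observe that, by construction in \eqref{eq:interpolation}, the trapezoidal quadrature is exactly the integral of the Fourier interpolant, i.e.\ $\int_{A_0}^{A_1} v_N(t,\cdot)\d t = \frac{A_1-A_0}{N}\sum_{j=1}^N v(t_j,\cdot)$, so the quadrature error equals $\int_{A_0}^{A_1}(v-v_N)(t,\cdot)\d t$; this is already the quantity appearing in the statement, and no further reduction of the left-hand side is needed.

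Next I would estimate $\left\|\int_{A_0}^{A_1}(v-v_N)(t,\cdot)\d t\right\|_{S(W)}$. The main tool is again Minkowski's integral inequality (Lemma \ref{th:minkowski}), which bounds the $S(W)$-norm of the $t$-integral by the $t$-integral of the $S(W)$-norm:
\begin{equation*}
\left\|\int_{A_0}^{A_1}(v-v_N)(t,\cdot)\d t\right\|_{S(W)}\leq \int_{A_0}^{A_1}\|(v-v_N)(t,\cdot)\|_{S(W)}\d t.
\end{equation*}
Then I would apply the Cauchy--Schwarz inequality over $[A_0,A_1]$ with the constant function $1$, giving
\begin{equation*}
\int_{A_0}^{A_1}\|(v-v_N)(t,\cdot)\|_{S(W)}\d t\leq (A_1-A_0)^{1/2}\left(\int_{A_0}^{A_1}\|(v-v_N)(t,\cdot)\|_{S(W)}^2\d t\right)^{1/2}=(A_1-A_0)^{1/2}\|v-v_N\|_{L^2([A_0,A_1];S(W))}.
\end{equation*}
Invoking Theorem \ref{th:interpolation} with the same $n$ then yields $\|v-v_N\|_{L^2([A_0,A_1];S(W))}\leq CN^{-2n-1/2}\|v\|_{C_p^{2m}([A_0,A_1];S(W))}$, and absorbing the factor $(A_1-A_0)^{1/2}$ into the constant gives the claimed bound (with the $C^{2n+1}$-norm controlling the $C_p^{2m}$-norm, consistent with Lemma \ref{th:Fourier}). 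As in the earlier proofs, the case $S(W)=L^2(\Omega^\Lambda_H)$ is treated first and the cases $S(W)=H^n(\Omega^\Lambda_H)$ follow identically.

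I do not expect a serious obstacle here: the statement is a routine corollary of the interpolation estimate, and the only slightly delicate point is bookkeeping which norm of $v$ appears on the right — matching the $C^{2n+1}$-norm in the displayed inequality against the $C_p^{2m}$-norm used in Theorem \ref{th:interpolation}, which is harmless since all these norms of $v$ are controlled by $\vertiii{w}_{S(W)}$ via Theorem \ref{th:regul_trans}. The $o(j^{-2n})$ variant corresponding to $h(t)=o(t^n)$ (Corollary \ref{th:Fourier1}) can be recorded as an immediate corollary, replacing the algebraic rate by a super-algebraic one.
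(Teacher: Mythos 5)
Your proposal is correct and follows essentially the same route as the paper: both bound $\left\|\int_{A_0}^{A_1}(v-v_N)(t,\cdot)\d t\right\|_{S(W)}$ by $\|v-v_N\|_{L^2([A_0,A_1];S(W))}$ via Lemma \ref{th:minkowski} (with Cauchy--Schwarz in $t$) and then invoke Theorem \ref{th:interpolation}, treating $S(W)=L^2(\Omega^\Lambda_H)$ first and the higher-order Sobolev cases analogously. If anything, your version is slightly more careful, since you track the factor $(A_1-A_0)^{1/2}$ that the paper silently absorbs into the constant.
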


\begin{proof}\high{First, let $S(W)=L^2(\Omega^\Lambda_H)$.}
With the result of Theorem \ref{th:interpolation} and Theorem \ref{th:minkowski},
\begin{equation*}
\begin{aligned}
\left\|\int_{A_0}^{A_1}(v- v_N)(t,\cdot)\d t\right\|^2_{S(W)}
=&\int_{\Omega^\Lambda_H}\left|\int_{A_0}^{A_1}(v-v_N)(t,\cdot)\d t\right|^2\d x\\
\leq& \int_{A_0}^{A_1}\left\|v-v_N\right\|^2_{L^2(\Omega^\Lambda_H)}\d t\\
\leq& \|v-v_N\|^2_{L^2([A_0,A_1];S(W))}.
\end{aligned}
\end{equation*}
Then 
\begin{equation*}
\begin{aligned}
\left\|\int_{A_0}^{A_1}\left(v-v_N\right)(t,\cdot)\d t\right\|_{S(W)}&\leq \|v-v_N\|_{L^2([A_0,A_1],S(W))}\\
&\leq\high{CN^{-2n-1/2}\|v\|_{C^{2n+1}([A_0,A_1];S(W))}}.
\end{aligned}
\end{equation*}

\high{The case that $S(W)=H^n(\Omega^\Lambda_H)$ for any $n\in\N$ could be proved similarly, thus we omit the proof here.}
then the proof is finished.
\end{proof}

The result could be applied to the error estimation of  the numerical  inverse Bloch transform \eqref{eq:inv_Bloch}.  
\begin{theorem}
Given a positive integer $N$,   define the nodal points and inverse Bloch trasnform for the two conditions.
\begin{enumerate}
\item If $\Wast=[a_0,a_1]$, then let $t_j=a_0+j(a_1-a_0)/N$, then the inverse Bloch transform
\begin{equation}
(\J^{-1}_{\Omega.N}w)(x)=\frac{a_1-a_0}{N}\sum_{j=1}^N v(g(t_j),x)g'(t_j)
\end{equation}
\item if $\Wast=[a_0,a_2]$, then let $t_j=a_0+j(a_1-a_0)/N$, $t_{j+N}=a_1+j(a_2-a_1)/N$, then the inverse Bloch transform
\begin{equation}
(\J^{-1}_{\Omega.N}w)(x)=\frac{a_1-a_0}{N}\sum_{j=1}^N v(g(t_j),x)g'(t_j)+\frac{a_2-a_1}{N}\sum_{j=N+1}^{2N} v(g(t_j),x)g'(t_j).
\end{equation}
\end{enumerate}
If $h(t)=O(h^{m+1})$ for some $m\in\N$,  the error of the numerical inverse Bloch transform is bounded by
\begin{equation*}
\|\J^{-1}_{\Omega,N}w-\J^{-1}_\Omega w\|_{S(W)}\leq\high{CN^{-2m-1/2}\|v\|_{C^{2m+1}([A_0,A_1];S(W))}},\quad\ell=1,2;
\end{equation*}
if $h(t)=o(h^{n+1})$ for any $n\in\N$,   the error of the numerical inverse Bloch transform is bounded by
\begin{equation*}
\|\J^{-1}_{\Omega,N}w-\J^{-1}_\Omega w\|_{S(W)}\leq\high{CN^{-2n-1/2}\|v\|_{C^{2n+1}([A_0,A_1];S(W))}},\quad\ell=1,2,
\end{equation*}
for any $n\in\N$.
\end{theorem}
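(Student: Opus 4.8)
The plan is to observe that the operator $\J^{-1}_{\Omega,N}$ written in the statement is exactly the composite trapezoidal rule applied to the change-of-variables integrand $v(t,\cdot)=w(g(t),\cdot)g'(t)$ on each interval between consecutive elements of $\S$, so that the whole statement reduces to combining the regularity of $w$ from Section 3 with the quadrature error bound of Theorem \ref{th:invBloch_num}.

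First I would check that $v$ fits the framework of Section 4. After the redefinition $\Wast=(-\k,\Lambda^*-\k]$ the singular points in $\overline{\Wast}$ are precisely $a_0,a_1$ (Case 1) or $a_0,a_1,a_2$ (Case 2), so each quadrature interval $[A_0,A_1]$ is one of the cells $[\alpha_j,\alpha_{j+1}]$ of Corollary \ref{th:reg4}. That corollary gives, on such a cell, the decomposition
\[
w(\alpha,\cdot)=w_0^j(\alpha,\cdot)+\sqrt{\alpha-a_j}\,w_1^j(\alpha,\cdot)+\sqrt{\alpha-a_{j+1}}\,w_2^j(\alpha,\cdot),\qquad w_\ell^j\in C^\infty\!\left([a_j,a_{j+1}];\widetilde{H}^2(\Omega^\Lambda_H)\right),
\]
which is exactly Assumption \ref{asp2} with $S(W)=\widetilde{H}^2(\Omega^\Lambda_H)$ (the $H^n$, $n\le 2$, cases follow by restriction). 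Since $g$ satisfies Assumption \ref{th:asp}, Theorem \ref{th:smoothness} and Theorem \ref{th:regul_trans} then yield $v\in C_p^{2m}([A_0,A_1];S_{g(t)}(W))\cap C^\infty([A_0,A_1];S_{g(t)}(W))$ with $\|\partial_t^j v(t,\cdot)\|_{S(W)}\le C(j,g)\vertiii{w}_{S(W)}$ in the $h(t)=O(t^{m+1})$ case, and the analogous $C_p^\infty$ statement when $h(t)=o(t^n)$ for all $n$.

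Next I would identify the discretisation. The computation preceding the theorem shows that integrating the trigonometric interpolant $v_N$ over $[A_0,A_1]$ produces $\frac{A_1-A_0}{N}\sum_{j=1}^N v(t_j,\cdot)$; substituting $v(t_j,\cdot)=w(g(t_j),\cdot)g'(t_j)$ identifies this with the formula for $\J^{-1}_{\Omega,N}w$ in item 1, and, when applied on each of the two cells of Case 2 and added, with the formula in item 2. Hence on each cell $\J^{-1}_{\Omega,N}w-\J^{-1}_\Omega w=\int_{A_0}^{A_1}(v-v_N)(t,\cdot)\d t$, and Theorem \ref{th:invBloch_num} bounds this by $CN^{-2m-1/2}\|v\|_{C^{2m+1}([A_0,A_1];S(W))}$ (respectively $CN^{-2n-1/2}\|v\|_{C^{2n+1}([A_0,A_1];S(W))}$ for any $n$). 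For Case 2 I would sum the two cell errors and apply the triangle inequality, absorbing the two constants, which depend only on $a_1-a_0$, $a_2-a_1$ and $g$, into a single $C$.

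I do not expect a genuine analytic obstacle: every nontrivial ingredient (the square-root decomposition of $w$, the gain of $2m$ derivatives under the substitution $\alpha=g(t)$, and the spectral quadrature estimate) is already proved earlier. The only point requiring care is bookkeeping, namely verifying that the branch points of the decomposition of Corollary \ref{th:reg4} sit exactly at the endpoints of each quadrature interval, so that no singularity lies in the interior, and that the $\Lambda^*$-periodicity of $w(\cdot,x)$ legitimises replacing the integral over $(-\Lambda^*/2,\Lambda^*/2]$ by the integral over $(-\k,\Lambda^*-\k]$, both of which were recorded in Section 4.
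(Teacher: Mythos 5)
Your proposal is correct and follows exactly the route the paper intends: the paper states this theorem as a direct application of Theorem \ref{th:invBloch_num} (combined with the regularity results of Corollary \ref{th:reg4} and Theorem \ref{th:regul_trans}), giving no separate proof, and your argument is precisely that application carried out cell by cell with the triangle inequality for Case 2.
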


\section{Finite element method for the scattering problems}

In this section, we assume that the incident field $u^i$ satisfies Assumption \ref{asp}. With the transform that $\alpha=g(t)$ in $\Wast$, the transformed inverse Bloch transform 
\begin{equation*}
(\widetilde{\J}^{-1}_\Omega v)\left(\begin{matrix}
x_1+\Lambda j\\
x_2
\end{matrix}\right)=\left[\frac{\Lambda}{2\pi}\right]^{1/2}\int_\Wast v(t,x)e^{-\i g(t)\Lambda j}\d t\quad x\in\Omega^\Lambda_H.
\end{equation*}
\high{Replace $\alpha$ by $g(t)$ in \eqref{eq:var_bloch}, then the function $v(t,x)$ satisfies the following equation for any $\phi\in L^2(\Wast;H^1_{g(t)}(\Omega^\Lambda_H))$}
\begin{equation}
\label{eq:var_bloch_trans_old}
\int_{\Wast}a_{g(t)}(v(t,\cdot),\phi(t,\cdot))\d t+ b\left(\widetilde{\J}^{-1}_\Omega v, \widetilde{\J}^{-1}_\Omega\left[\phi(t,\cdot)g'(t)\right]\right)=\int_\Wast\int_{\Gamma^\Lambda_H}F(t,\cdot)\overline{\phi}(t,\cdot)\d t
\end{equation}
where $F(t,\cdot)=f(g(t),\cdot)g'(t)$. 

\high{Define the  space $L^2(\Wast;H^s_{g(t)}(\Omega^\Lambda_H);g)$ by 
\begin{equation*}
L^2(\Wast;H^s_{g(t)}(\Omega^\Lambda_H);g):=\left\{\phi(t,\cdot)=\psi(g(t),\cdot)g'(t);\,\psi(\alpha,\cdot)\in L^2(\Wast;H^s_\alpha(\Omega^\Lambda_H))\right\}.
\end{equation*}
Then define the norm in $L^2(\Wast;H^s_{g(t)}(\Omega^\Lambda_H);g)$ by
\begin{equation*}
\|\phi\|^2_{L^2(\Wast;H^s_{g(t)}(\Omega^\Lambda_H);g)}=\|\psi\|^2_{L^2(\Wast;H^s_\alpha(\Omega^\Lambda_H))}=\int_\Wast \|\psi(g(t),\cdot)\|^2_{H^s_{g(t)}(\Omega^\Lambda_H)}g'(t)\d t.
\end{equation*}
We can also define the function space $H_0^r(\Wast;H^s_{g(t)}(\Omega^\Lambda_H);g)$ in the same way. 
By replacing $\phi(t,\cdot)g'(t)$ with $\psi(t,\cdot)\in L^2(\Wast;H^s_{g(t)}(\Omega^\Lambda_H);g)$, the variational problem \eqref{eq:var_bloch_trans_old} is equivalent to
\begin{equation}\label{eq:var_bloch_trans}
\begin{aligned}
\int_{\Wast}a_{g(t)}(v(t,\cdot),\psi(t,\cdot))\left[g'(t)\right]^{-1}\d t+ b\left(\widetilde{\J}^{-1}_\Omega v, \widetilde{\J}^{-1}_\Omega\psi\right)=\int_\Wast\int_{\Gamma^\Lambda_H}F(t,\cdot)\overline{\psi}(t,\cdot)\left[g'(t)\right]^{-1}\d t.
\end{aligned}
\end{equation} 
The transformed inverse Bloch transform $\widetilde{J}^{-1}_\Omega$ is also defined in $H_0^r(\Wast;H^s_{g(t)}(\Omega^\Lambda_H);g)$, we will show that the operator is bounded. Take $v\in L^2(\Wast;L^2(\Omega^\Lambda_H);g)$,
\begin{equation*}
\left\|\widetilde{\J}_\Omega^{-1}v\right\|^2_{L^2(\Omega^H)}=
\left\|\J_\Omega^{-1} w\right\|^2_{L^2(\Omega^H)}\leq \|w\|^2_{L^2(\Wast;L^2(\Omega^\Lambda_H))}=\|v\|^2_{L^2(\Wast;L^2(\Omega^\Lambda_H);g)}.
\end{equation*}
We can also prove that $\widetilde{\J}^{-1}_\Omega$ is a bounded operator from $H_0^r(\Wast;H^s_{g(t)}(\Omega^\Lambda_H);g)$ to $H_r^s(\Omega^H)$.

From the procedure in the arguments above, the following equivalence result is obtained by simply change of variables.
\begin{lemma}\label{lm:ini_solv}
Suppose the incident field $u^i\in H_r^1(\Omega^H_p)$ for some $r\in[0,1)$ and the functions $\zeta,\,\zeta_p$ are Lipschitz continuous functions. Then $w=\J_\Omega u_T\in H_0^r(\Wast;\widetilde{H}^1_\alpha(\Omega_H^\Lambda))$ satisfies  variational problem \eqref{eq:var_bloch} if and only if $v\in H_0^r(\Wast;\widetilde{H}^1_\alpha(\Omega_H^\Lambda);g)$ is the solution to \eqref{eq:var_bloch_trans}.
\end{lemma}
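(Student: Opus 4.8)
The plan is to deduce Lemma~\ref{lm:ini_solv} from \eqref{eq:var_bloch} by two successive changes of variable, without any new analysis: the change $\alpha = g(t)$ turns \eqref{eq:var_bloch} into \eqref{eq:var_bloch_trans_old}, and the further substitution $\psi(t,\cdot)=\phi(t,\cdot)g'(t)$ turns \eqref{eq:var_bloch_trans_old} into \eqref{eq:var_bloch_trans}. Here $v$ is the function $v(t,\cdot)=w(g(t),\cdot)g'(t)$ introduced in \eqref{eq:integrand}, so that $w(g(t),\cdot)=v(t,\cdot)/g'(t)$ in the interior of each of the subintervals $[a_j,a_{j+1}]$ making up $\overline{\Wast}$. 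Recall from Section~\ref{sec:def_g} that on each such subinterval $g$ is a $C^\infty$ monotone bijection with $g'>0$ in the interior, so $t\mapsto g(t)$ is a legitimate substitution and $\d\alpha = g'(t)\,\d t$.

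First I would carry out the substitution termwise in \eqref{eq:var_bloch}. In the sesquilinear form, $\int_\Wast a_\alpha(w(\alpha,\cdot),\psi(\alpha,\cdot))\,\d\alpha = \int_\Wast a_{g(t)}\big(w(g(t),\cdot),\psi(g(t),\cdot)\big)g'(t)\,\d t$; since $g'(t)$ is a positive real scalar it passes through $a_{g(t)}$, and inserting $w(g(t),\cdot)=v(t,\cdot)/g'(t)$ cancels it, so with the identification $\phi(t,\cdot):=\psi(g(t),\cdot)$ this is exactly the first term of \eqref{eq:var_bloch_trans_old}. For the nonlocal term, the same change of variable inside the integral defining the inverse transform yields $\J^{-1}_\Omega w = \widetilde{\J}^{-1}_\Omega v$ and $\J^{-1}_\Omega\psi = \widetilde{\J}^{-1}_\Omega[\phi(\cdot)g'(\cdot)]$, whence $b(\J^{-1}_\Omega w,\J^{-1}_\Omega\psi) = b(\widetilde{\J}^{-1}_\Omega v,\widetilde{\J}^{-1}_\Omega[\phi\,g'])$; and the right-hand side becomes $\int_\Wast\int_{\Gamma^\Lambda_H} f(g(t),\cdot)g'(t)\,\overline{\phi}(t,\cdot)\,\d s\,\d t$, i.e.\ the right-hand side of \eqref{eq:var_bloch_trans_old} with $F(t,\cdot)=f(g(t),\cdot)g'(t)$. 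The step from \eqref{eq:var_bloch_trans_old} to \eqref{eq:var_bloch_trans} is then the elementary substitution $\psi(t,\cdot)=\phi(t,\cdot)g'(t)$, again using that $g'$ is real so that the conjugation in the second slot of $a_{g(t)}$ leaves it unchanged; this produces the weights $[g'(t)]^{-1}$ displayed in \eqref{eq:var_bloch_trans}.

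Next I would handle the function spaces and the quantifier over test functions, which is what upgrades the termwise identities into a genuine equivalence. Membership $w\in H_0^r(\Wast;\widetilde{H}^1_\alpha(\Omega^\Lambda_H))$ is equivalent to $v\in H_0^r(\Wast;\widetilde{H}^1_\alpha(\Omega^\Lambda_H);g)$, essentially by the definition of the latter weighted space: for $r=0$ this is the stated isometry of $\J^{-1}_\Omega$, and for $r\in(0,1)$ it holds because $H_0^r(\Wast;\cdot;g)$ is defined through exactly the map $\chi\mapsto\chi(g(\cdot),\cdot)g'(\cdot)$, which moreover preserves $\Lambda^*$-periodicity, quasi-periodicity, and---since $g\in C^\infty$---the interpolation/duality $H^r$-scale in the transform variable. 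In the same way, since $t\mapsto g(t)$ is a $C^\infty$ bijection, $\psi\mapsto\psi(g(\cdot),\cdot)$ and $\phi\mapsto\phi(\cdot)g'(\cdot)$ are bijections, the first from $L^2(\Wast;\widetilde{H}^1_\alpha(\Omega^\Lambda_H))$ onto $L^2(\Wast;\widetilde{H}^1_{g(t)}(\Omega^\Lambda_H))$ and the second from the latter onto $L^2(\Wast;\widetilde{H}^1_{g(t)}(\Omega^\Lambda_H);g)$, so quantifying over test functions in \eqref{eq:var_bloch} is the same condition as quantifying over test functions in \eqref{eq:var_bloch_trans}. Combined with the termwise identities this shows that $w$ solves \eqref{eq:var_bloch} if and only if $v$ solves \eqref{eq:var_bloch_trans}, and the existence of a unique solution on each side is inherited from Theorem~\ref{th:uni_solv} (this is where the Lipschitz hypothesis on $\zeta,\zeta_p$ enters).

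The only point that needs care---and the nearest thing to an obstacle---is the degeneracy $g'(a_j)=0$ forced by Assumption~\ref{th:asp}, which at first glance makes $w(g(t),\cdot)=v(t,\cdot)/g'(t)$ and $\psi(t,\cdot)/g'(t)$ look singular at the endpoints. This is not a genuine difficulty: by construction $v$, $F$, and every admissible test function for the transformed problems carry a compensating factor of $g'$ (this is precisely how \eqref{eq:integrand} and the weighted spaces $L^2(\Wast;\cdot;g)$ are set up), so each $[g'(t)]^{-1}$ appearing above is paired with a $g'(t)$ and the resulting product extends continuously through $t=a_j$; and the zero set of $g'$ is in any case finite, hence of measure zero, so none of the integral identities is disturbed. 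Once this is noted the argument is entirely bookkeeping.
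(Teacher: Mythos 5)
Your proposal is correct and follows exactly the route the paper intends: the paper gives no separate proof of Lemma~\ref{lm:ini_solv}, stating only that it ``is obtained by simply change of variables'' from the preceding derivation of \eqref{eq:var_bloch_trans_old} and \eqref{eq:var_bloch_trans}, and your two-step substitution ($\alpha=g(t)$ followed by $\psi=\phi\,g'$) together with the identification of the weighted spaces $H_0^r(\Wast;\cdot;g)$ is precisely that argument written out. Your extra remarks on the bijectivity of the test-function maps and on the harmlessness of the endpoint zeros of $g'$ fill in details the paper leaves implicit, and are consistent with how the spaces $L^2(\Wast;\cdot;g)$ are defined there.
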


From the equivalence between \eqref{eq:var_bloch_trans} and \eqref{eq:var_bloch}, the following result is a simple corollary of Theorem \ref{th:uni_solv} and Theorem \ref{th:higherregu}.

\begin{theorem}The variational problem \eqref{eq:var_bloch_trans} is uniquely solvable in $H_0^r(\Wast;\widetilde{H}^1_\alpha(\Omega_H^\Lambda);g)$.  Further more, if $u^i$ satisfies Assumption \ref{asp},  $\zeta,\,\zeta_p\in C^{2,1}(\R)$, when $h(t)=O(t^{m+1})$, the unique solution $v$ to \eqref{eq:var_bloch_trans} belongs to the space $C_p^{2n}(\Wast;\widetilde{H}^2_{g(t)}(\Omega^\Lambda_H))$. 
\end{theorem}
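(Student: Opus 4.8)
The plan is to separate the statement into its two assertions and obtain each from results already available. For the unique solvability, I would invoke Lemma~\ref{lm:ini_solv}: since $\alpha=g(t)$ is a $C^\infty$, strictly increasing bijection of each (redefined) periodic cell onto itself, the map $\psi(\alpha,\cdot)\mapsto\psi(g(t),\cdot)g'(t)$ is a bounded isomorphism between $H_0^r(\Wast;\widetilde{H}^1_\alpha(\Omega^\Lambda_H))$ and $H_0^r(\Wast;\widetilde{H}^1_{g(t)}(\Omega^\Lambda_H);g)$, and, as indicated in the derivation of \eqref{eq:var_bloch_trans} from \eqref{eq:var_bloch}, it intertwines the two sesquilinear forms. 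Hence $w=\J_\Omega u_T$ solves \eqref{eq:var_bloch} if and only if $v$ solves \eqref{eq:var_bloch_trans}, and the unique solvability of \eqref{eq:var_bloch} from Theorem~\ref{th:uni_solv} (using that $\zeta,\zeta_p$ are Lipschitz) transfers verbatim; likewise the higher regularity $w(\alpha,\cdot)\in\widetilde{H}^2_\alpha(\Omega^\Lambda_H)$ from Theorem~\ref{th:higherregu} transfers, so $v\in H_0^r(\Wast;\widetilde{H}^2_{g(t)}(\Omega^\Lambda_H);g)$.

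For the smoothness assertion I would first feed in the regularity theory of Section~3. Under Assumption~\ref{asp} with $\zeta,\zeta_p\in C^{2,1}(\R)$, Theorem~\ref{th:reg3} gives $w=\J_\Omega u\in\A_c^\omega(\Wast;\widetilde{H}^2_\alpha(\Omega^\Lambda_H);\S)$, and Corollary~\ref{th:reg4} supplies, on each closed interval between two consecutive Wood anomalies, a decomposition $w(\alpha,\cdot)=w_0^j(\alpha,\cdot)+\sqrt{\alpha-\alpha_j}\,w_1^j(\alpha,\cdot)+\sqrt{\alpha-\alpha_{j+1}}\,w_2^j(\alpha,\cdot)$ with $C^\infty$ coefficients valued in $\widetilde{H}^2(\Omega^\Lambda_H)$. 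After the redefinition of $\Wast$ as $(a_0,a_1]$ (Case~1) or $(a_0,a_1]\cup(a_1,a_2]$ (Case~2), the endpoints $a_0,a_1$ (and $a_2$) are precisely the points of $\S$ lying in $\overline{\Wast}$, so on each sub-cell $[A_0,A_1]\in\{[a_0,a_1],[a_1,a_2]\}$ the function $w$ satisfies exactly Assumption~\ref{asp2} with $S(W)=\widetilde{H}^2(\Omega^\Lambda_H)$.

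Then I would apply Theorem~\ref{th:smoothness} (equivalently Theorem~\ref{th:regul_trans}): because $h(t)=O(t^{m+1})$, the integrand $v(t,\cdot)=w(g(t),\cdot)g'(t)$ of \eqref{eq:integrand} lies in $C_p^{2m}([A_0,A_1];\widetilde{H}^2_{g(t)}(\Omega^\Lambda_H))\cap C^\infty([A_0,A_1];\widetilde{H}^2_{g(t)}(\Omega^\Lambda_H))$ on each sub-cell, with the stated bounds in terms of $\vertiii{\cdot}_{\widetilde{H}^2(\Omega^\Lambda_H)}$. The last step is to glue the one or two pieces: the $C_p^{2m}$ property forces all $t$-derivatives of $v$ up to order $2m$ to vanish at $a_0$, $a_1$ and $a_2$, so the one-sided derivatives automatically match (all being zero) both across the interior junction $a_1$ in Case~2 and across the periodic identification $a_0\sim a_2$; hence the glued function is $\Lambda^*$-periodic with vanishing derivatives up to order $2m$ at the endpoints, i.e. $v\in C_p^{2m}(\Wast;\widetilde{H}^2_{g(t)}(\Omega^\Lambda_H))$. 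Combined with the first part, this $v$ is the unique solution of \eqref{eq:var_bloch_trans}. (The index $2n$ in the statement should read $2m$, matching $h(t)=O(t^{m+1})$.)

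The only genuinely delicate point — and the one I would spell out carefully — is the compatibility claimed in the second paragraph: that the \emph{global} structure of $w$ on $\Wast$ from Corollary~\ref{th:reg4} really is the \emph{localized} hypothesis of Assumption~\ref{asp2} on each redefined sub-cell, i.e. that the Wood anomalies inside $\overline{\Wast}$ are exactly the endpoints $a_j$, that $w$ is analytic (not merely $C^\infty$) strictly between them, and that the square-root terms sit precisely at the two ends of each $[A_0,A_1]$. Once this bookkeeping is settled, Theorems~\ref{th:smoothness} and~\ref{th:regul_trans} carry out all the analytic work, and the patching across endpoints is immediate from the vanishing-derivative ($C_p$) property.
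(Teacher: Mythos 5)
Your proposal is correct and follows essentially the same route as the paper: unique solvability via the change-of-variables equivalence of Lemma~\ref{lm:ini_solv} together with Theorem~\ref{th:uni_solv}, and the regularity via Theorem~\ref{th:reg3}, Corollary~\ref{th:reg4}, Assumption~\ref{asp2} and Theorem~\ref{th:regul_trans} applied on each sub-cell between consecutive Wood anomalies. Your extra care about gluing the sub-cells through the vanishing-derivative ($C_p$) property and the remark that the exponent $2n$ should read $2m$ are refinements of bookkeeping the paper leaves implicit, not a different argument.
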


\begin{proof}
We only need to prove the unique solvability. Suppose $v\in H_0^r(\Wast;H^s_{g(t)}(\Omega^\Lambda_H);g)$ is the solution to \eqref{eq:var_bloch_trans} for $F=0$, from the definition of the space $H_0^r(\Wast;H^s_{g(t)}(\Omega^\Lambda_H);g)$, there is a $w\in H_0^r(\Wast;\widetilde{H}^1_\alpha(\Omega^\Lambda_H))$ such that $v(t,\cdot)=w(g(t),\cdot)g'(t)$. From Lemma \ref{lm:ini_solv}, $w(\alpha,\cdot)$ is the unique solution to \eqref{eq:var_bloch} with $f=0$. From Theorem \ref{th:uni_solv}, the variational problem \eqref{eq:var_bloch} has at most one solution, so $w=0$, then $v=0$. Then the injectivity is proved. From the Bounded Inverse Theorem, the problem is uniquely solvable. 

When $u^i$ satisfies Assumption \ref{asp} and $\zeta,\,\zeta_p\in C^{2,1}(\R)$, the solution $w$ to \eqref{eq:var_bloch} belongs to the space $H_0^r(\Wast;\widetilde{H}^2_\alpha(\Omega^\Lambda_H))$ and belongs to $\mathcal{A}_c^\omega\left(\Wast;\widetilde{H}^2_\alpha(\Omega^\Lambda_H);\S\right)$, then from Corollary \ref{th:reg4} $w$ satisfies Assumption \ref{asp2}. From Theorem \ref{th:regul_trans}, when $h=O(t^{m+1})$, $v$ belongs to the space $C_p^{2n}([a_0,a_1];\widetilde{H}^2_{g(t)}(\Omega^\Lambda_H))$ for any $a_0,\,a_1\in\S$ such that $(a_0,a_1)\cap\S$. From the definition of the space $C_p^{2n}(\Wast;\widetilde{H}^2_{g(t)}(\Omega^\Lambda_H))$ and the new definition of $\Wast$,  $v\in C_p^{2n}(\Wast;\widetilde{H}^2_{g(t)}(\Omega^\Lambda_H))\subset H_0^{2n}(\Wast;\widetilde{H}^2_{g(t)}(\Omega^\Lambda_H))$. 
\end{proof}
}

We will build up the finite element space, take $\Wast=[a_0,a_1]$ for example. Let the uniformly located grid points be $\left\{t_j\right\}_{j=1}^N$ and Fourier basic functions $\phi^{(j)}_N$ be defined as in \eqref{eq:gridpoitns}-\eqref{eq:basis}. For notations defined in $\Omega^\Lambda_H$, recall the triangular mesh  $\mathcal{M}_H$, the piecewise linear basic functions  $\left\{\psi^{(\ell)}_M\right\}_{\ell=1}^M$ and the finite dimensional subspace $V_0={\rm span}\left\{\phi^{(\ell)}_M\right\}_{\ell=1}^M\subset H_0^1(\Omega^\Lambda_H)$. Define the finite element space $\widetilde{X}_{N,h}$ by
\begin{equation*}
\widetilde{X}_{N,h}=\left\{v_{N,h}(t,x)=\sum_{j=1}^N\sum_{l=1}^M v^{(j,\ell)}_{N,h}\exp\left(-\i g(t_j)x_1\right)\phi^{j}_N(t)\psi^{\ell}_M(x)\right\}.
\end{equation*}

\begin{theorem}\label{th:interp1}
If $v\in  C_p^{2n}([a_0,a_1];H^2_{g(t)}(\Omega^\Lambda_H))$, then the error between the approximation of $v$ in $\widetilde{X}_{N,h}$, i.e., $v_N$, and $v$, is bounded by
\begin{equation*}
\min_{v_{N,h}\in \widetilde{X}_{N,h}}\|v-v_{N,h}\|_{L^2([A_0,A_1];H^1_{g(t)}(\Omega^\Lambda_H))}\high{\leq}C\left(N^{-2n-1/2}+h\right)\high{\|v\|_{C^{2n+1}([A_0,A_1];H^2_{g(t)}(\Omega^\Lambda_H))}},
\end{equation*}
for large enough $N$ and small enough $h>0$, where $C$ depends on $n$ and $g$.
\end{theorem}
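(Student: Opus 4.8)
The plan is to build an explicit competitor $v_{N,h}\in\widetilde X_{N,h}$ by separating the two coordinate directions and then estimate the error by the triangle inequality, each piece being handled by one of the two ingredients already available. Concretely, let $v_N(t,\cdot)=\sum_{\ell=1}^N\phi_N^{(\ell)}(t)\,v(t_\ell,\cdot)$ be the trigonometric interpolant of $v$ in the $t$--variable from \eqref{eq:interpolation}. For each node write $v(t_\ell,x)=e^{-\i g(t_\ell)x_1}\widetilde v_\ell(x)$, so that $\widetilde v_\ell$ is periodic in $x_1$ and, by the hypothesis $v\in C_p^{2n}([a_0,a_1];H^2_{g(t)}(\Omega^\Lambda_H))$ (take the $j=0$ condition of the $C_p^{2n}$--space), lies in $H^2(\Omega^\Lambda_H)$ with $\|\widetilde v_\ell\|_{H^2(\Omega^\Lambda_H)}\le C(g)\,\|v(t_\ell,\cdot)\|_{H^2_{g(t_\ell)}(\Omega^\Lambda_H)}$ uniformly in $\ell$ (multiplication by the bounded smooth phase $e^{\i g(t_\ell)x_1}$ is bounded on $H^2$ over the bounded cell $\W$). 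Letting $\Pi_h$ denote the piecewise--linear nodal interpolation in $x$, set
\begin{equation*}
v_{N,h}(t,x):=\sum_{\ell=1}^N\phi_N^{(\ell)}(t)\,e^{-\i g(t_\ell)x_1}\,(\Pi_h\widetilde v_\ell)(x),
\end{equation*}
which by construction belongs to $\widetilde X_{N,h}$; hence $\min_{\psi\in\widetilde X_{N,h}}\|v-\psi\|\le\|v-v_N\|+\|v_N-v_{N,h}\|$, both norms taken in $L^2([A_0,A_1];H^1_{g(t)}(\Omega^\Lambda_H))$.

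For the first term, $\|v-v_N\|$ is exactly the trigonometric interpolation error in $t$, and Theorem \ref{th:interpolation} applied with $S(W)=H^1_{g(t)}(\Omega^\Lambda_H)$ (the assumption on $v$ furnishes the required $C_p^{2n}$--regularity, and only the $C^{2n+1}$--norm enters the bound) gives
\begin{equation*}
\|v-v_N\|_{L^2([A_0,A_1];H^1_{g(t)}(\Omega^\Lambda_H))}\le C\,N^{-2n-1/2}\,\|v\|_{C^{2n+1}([A_0,A_1];H^1_{g(t)}(\Omega^\Lambda_H))},
\end{equation*}
and the last norm is dominated by $\|v\|_{C^{2n+1}([A_0,A_1];H^2_{g(t)}(\Omega^\Lambda_H))}$ since $H^2(\Omega^\Lambda_H)\hookrightarrow H^1(\Omega^\Lambda_H)$.

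For the second term, $v_N-v_{N,h}=\sum_{\ell=1}^N\phi_N^{(\ell)}(t)\,e^{-\i g(t_\ell)x_1}\bigl(\widetilde v_\ell-\Pi_h\widetilde v_\ell\bigr)(x)$. Expanding $\|v_N(t,\cdot)-v_{N,h}(t,\cdot)\|_{H^1(\Omega^\Lambda_H)}^2$ as a double sum over $\ell,\ell'$ and integrating over $[A_0,A_1]$, all cross terms vanish by the orthogonality relation $\int_{A_0}^{A_1}\phi_N^{(\ell)}(t)\overline{\phi_N^{(\ell')}(t)}\d t=\frac{A_1-A_0}{N}\delta_{\ell,\ell'}$ of Lemma \ref{th:basis_fourier}, so
\begin{equation*}
\|v_N-v_{N,h}\|^2_{L^2([A_0,A_1];H^1_{g(t)}(\Omega^\Lambda_H))}=\frac{A_1-A_0}{N}\sum_{\ell=1}^N\bigl\|e^{-\i g(t_\ell)x_1}(\widetilde v_\ell-\Pi_h\widetilde v_\ell)\bigr\|^2_{H^1(\Omega^\Lambda_H)}.
\end{equation*}
Since multiplication by $e^{\pm\i g(t_\ell)x_1}$ is bounded on $H^1$ and $H^2$ with constant depending only on $g$, the classical nodal interpolation estimate on the quasi--uniform mesh $\M_h$ yields $\|e^{-\i g(t_\ell)x_1}(\widetilde v_\ell-\Pi_h\widetilde v_\ell)\|_{H^1(\Omega^\Lambda_H)}\le C(g)\,h\,\|v(t_\ell,\cdot)\|_{H^2_{g(t_\ell)}(\Omega^\Lambda_H)}$. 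Bounding each nodal norm by $\sup_{t}\|v(t,\cdot)\|_{H^2_{g(t)}(\Omega^\Lambda_H)}\le\|v\|_{C^{2n+1}([A_0,A_1];H^2_{g(t)}(\Omega^\Lambda_H))}$, the prefactor $\frac{A_1-A_0}{N}\sum_{\ell=1}^N$ collapses to a constant, giving $\|v_N-v_{N,h}\|_{L^2([A_0,A_1];H^1_{g(t)}(\Omega^\Lambda_H))}\le C\,h\,\|v\|_{C^{2n+1}([A_0,A_1];H^2_{g(t)}(\Omega^\Lambda_H))}$. Adding the two bounds finishes the proof.

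The point needing the most care is the bookkeeping with the phase factors $e^{-\i g(t_\ell)x_1}$ frozen at the nodes: one must verify that the constructed $v_{N,h}$ genuinely lies in $\widetilde X_{N,h}$ (which is why the phases are evaluated at $t_\ell$, not at a running $t$), that the periodic factors $\widetilde v_\ell$ inherit $H^2$--regularity uniformly in $\ell$ from the hypothesis on $v$, and that the finite element interpolation constant can be taken independent of $\ell$ and depending only on $n$ and $g$ — which uses that $g\in C^\infty$ on a bounded interval with bounded range, so all quantities built from $g$ and $\{t_\ell\}$ are uniformly controlled. Everything else is the triangle inequality together with the two quoted estimates: Theorem \ref{th:interpolation} for the $t$--direction and the standard $O(h)$ nodal interpolation estimate for $H^2$ functions on quasi--uniform meshes.
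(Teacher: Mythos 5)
Your proposal is correct and follows essentially the same route as the paper: the same competitor (trigonometric interpolant in $t$ with the phase frozen at the nodes and piecewise-linear nodal interpolation in $x$), the same splitting via the triangle inequality, Theorem \ref{th:interpolation} for the $t$-direction, Lemma \ref{th:basis_fourier} to decouple the nodes, and the standard $O(h)$ finite element interpolation estimate at each node. The extra care you take with the phase factors $e^{-\i g(t_\ell)x_1}$ and the uniformity of constants is exactly what the paper's proof does implicitly.
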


\begin{proof}
From Theorem \ref{th:interpolation}, $v_N(t,x)=\sum_{j=1}^N\phi_N^{(j)}(t)v(t_j,x)$ is an approximation of $v$ with the error 
\begin{equation}\label{eq:err1}
\|v-v_N\|_{L^2(\Wast;H_{g(t)}^1(\Omega^\Lambda_H))}\leq\high{CN^{-2m-1/2}\|v\|_{C^{2m+1}([A_0,A_1];H_{g(t)}^1(\Omega^\Lambda_H))}}.
\end{equation} 

For each $t_j$, the function $\exp\left(\i g(t_j)x_1\right)v(t_j,x)$ is approximated by functions in the subspace $V_0={\rm span}\left\{\psi^{(\ell)}_M,\,\ell=1,\dots,M\right\}$, i.e., there is a series of coefficients $v^{(j,\ell)}_{N,h}\in \C$ such that
\begin{equation*}
v(t_j,x)=\sum_{\ell=1}^M v^{(j,\ell)}_{N,h}\exp\left(-\i g(t_j)x_1\right)\psi^{\ell}_M(x)
\end{equation*}
for any $x$ in the grid points of $\mathcal{M}_h$, with the error estimate
\begin{equation*}
\begin{aligned}
\left\|v(t_j,\cdot)-\sum_{\ell=1}^M v^{(j,\ell)}_{N,h}\exp\left(-\i g(t_j)x_1\right)\psi^{\ell}_M(\cdot)\right\|_{H^1_{g(t)}(\Omega^\Lambda_H)}&\leq C h\|v(t_j,\cdot)\|_{H^2_{g(t)}(\Omega^\Lambda_H)}\\
&\leq C h \|v\|_{C_p^{2n}([a_0,a_1];H^2_{g(t)}(\Omega^\Lambda_H))}.
\end{aligned}
\end{equation*}
For $v_{N,h}=\sum_{j=1}^N\sum_{\ell=1}^M v^{(j,\ell)}_{N,h}\exp\left(-\i g(t_j)x_1\right)\phi_N^{(j)}(t)\psi^{\ell}_M(x)$, and let $v_j^h(x):=v(t_j,x)-\sum_{\ell=1}^M v_{N,h}^{(j,\ell)}\exp\left(-\i g(t_j)x_1\right)\psi^\ell_M(x)$, then $\left\|v_j^h\right\|_{H^1_{g(t)}(\Omega^\Lambda_H)}\leq C h \|v\|_{H_0^{2n}([a_0,a_1];H^2_{g(t)}(\Omega^\Lambda_H))}$. With the result of Lemma \ref{th:basis_fourier},
\begin{equation*}
\begin{aligned}
&\left\|v_N-v_{N,h}\right\|^2_{L^2(\Wast;L^2(\Omega^\Lambda_H))}\\
= &\int_\Wast\int_{\Omega^\Lambda_H}\left|\sum_{j=1}^N\phi_N^{(j)}(t)v(t_j,x)-\sum_{j=1}^N\sum_{\ell=1}^M v^{(j,\ell)}_{N,h}\exp\left(-\i g(t_j)x_1\right)\phi_N^{(j)}(t)\psi^{\ell}_M(x)\right|^2\d x\d t\\
=& \int_\Wast\int_{\Omega^\Lambda_H}\left|\sum_{j=1}^N\phi_N^{(j)}(t)v^h_j(x)\right|^2\d x\d t\\
\leq& \int_\Wast\int_{\Omega^\Lambda_H}\sum_{j,j'=1}^N \phi_N^{(j)}(t)\overline{\phi_N^{(j')}}(t)v^h_j(x)\overline{v^h_{j'}}(x)\d x\d t\\
=& \frac{a_1-a_0}{N}\sum_{j=1}^N\left\|v^h_j\right\|^2_{L^2(\Omega^\Lambda_H)}.
\end{aligned}
\end{equation*}
Similarly, we can have the $H^1$-estimate 
\begin{equation}\label{eq:err2}
\begin{aligned}
\left\|v_N-v_{N,h}\right\|^2_{L^2([a_0,a_1];H^1_{g(t)}(\Omega^\Lambda_H))}&\leq \frac{a_1-a_0}{N}\sum_{j=1}^N\left\|v^h_j\right\|^2_{H^1_{g(t)}(\Omega^\Lambda_H)}\\
&\leq C^2\left[a_0-a_1\right]h^2\|v\|^2_{C_p^{2n}([a_0,a_1];H^2_{g(t)}(\Omega^\Lambda_H))}
\end{aligned}
\end{equation}
With the inequalities \eqref{eq:err1} and \eqref{eq:err2}, 
\high{\begin{equation*}
\begin{aligned}
\left\|v-v_{N,h}\right\|_{L^2(\Wast;H^1_{g(t)}(\Omega^\Lambda_H))}&\leq \left\|v-v_N\right\|_{L^2(\Wast;H^1_{g(t)}(\Omega^\Lambda_H))}+\left\|v_N-v_{N,h}\right\|_{L^2(\Wast;H^1_{g(t)}(\Omega^\Lambda_H))}\\
&\leq {CN^{-2n-1/2}\|v\|_{C^{2n+1}([A_0,A_1];H^1(\Omega^\Lambda_H))}}+Ch \|v\|_{C_p^{2n}([A_0,A_1];H^2(\Omega^\Lambda_H))}\\
&\leq C(N^{-2n-1/2}+h)\|v\|_{C^{2n+1}([A_0,A_1];H^2_{g(t)}(\Omega^\Lambda_H))}.
\end{aligned}
\end{equation*}}
where $C$ depends on $n$ and $g$.
\end{proof}

With this results, we can estimate the error of the approximation of the Bloch transformed total field $w$ by functions in $\widetilde{X}_{N,h}$, when $\Wast=[a_0,a_1]$.
\begin{theorem}\label{th:interp2}
Suppose $u^i$ satisfies Assumption \ref{asp}, $g$ satisfies Assumption \ref{th:asp}, $\zeta,\,\zeta_p\in C^{2,1}(\R)$. \high{$u$ is the total field with the incident field $u^i$ and surface $\Gamma_p$, and  $u_T:=u\circ\Phi_p$, and define $w$ is the Bloch transform of $u_T$, i.e., $w:=\J_\Omega u_T$}. Let $v$ be defined by \high{\eqref{eq:integrand}}, then $v$ satisfies the variational equation \eqref{eq:var_bloch_trans}, for any $\phi\in L^2(\Wast;H^1_\alpha(\Omega^\Lambda_H))$. \high{Moreover}, if $h=O(t^{m+1})$ for some $m\in\N$ as $t\rightarrow0^+$, the approximation of $v$ in $\widetilde{X}_{N,h}$ is bounded by
\begin{equation*}
\min_{v_{N,h}\in \widetilde{X}_{N,h}}\|v-v_{N,h}\|_{L^2(\Wast;H^1(\Omega^\Lambda_H))}\leq C\left(N^{-2m-1/2}+h\right)\high{\|v\|_{C^{2m+1}([A_0,A_1];H^2(\Omega^\Lambda_H))}};
\end{equation*}
if $h=o(t^{n+1})$ for any $n\in\N$ as $t\rightarrow0^+$,  the approximation of $v$ in $\widetilde{X}_{N,h}$ is bounded by
\begin{equation*}
\min_{v_{N,h}\in \widetilde{X}_{N,h}}\|v-v_{N,h}\|_{L^2(\Wast;H^1(\Omega^\Lambda_H))}\leq C\left(N^{-2n-1/2}+h\right)\high{\|v\|_{C^{2n+1}([A_0,A_1];H^2(\Omega^\Lambda_H))}},
\end{equation*}
for any $n\in\N$.
\end{theorem}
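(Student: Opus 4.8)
The plan is to assemble Theorem~\ref{th:interp2} from the pieces already in hand, essentially by tracing the chain of equivalences and regularity statements and then invoking Theorem~\ref{th:interp1}. First I would verify that $v$ indeed satisfies the variational equation \eqref{eq:var_bloch_trans}: by Theorem~\ref{th:uni_solv} the Bloch transform $w=\J_\Omega u_T$ of $u_T=u\circ\Phi_p$ satisfies \eqref{eq:var_bloch}, and by the change-of-variables discussion preceding Lemma~\ref{lm:ini_solv} (applied with $\alpha=g(t)$, $v(t,\cdot)=w(g(t),\cdot)g'(t)$ as in \eqref{eq:integrand}), this is equivalent to $v$ solving \eqref{eq:var_bloch_trans}. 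This is the routine bookkeeping part.

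Next I would establish the regularity of $v$ needed to feed Theorem~\ref{th:interp1}. Under Assumption~\ref{asp} and $\zeta,\zeta_p\in C^{2,1}(\R)$, Theorem~\ref{th:higherregu} gives $w(\alpha,\cdot)\in\widetilde H^2_\alpha(\Omega^\Lambda_H)$, and Theorem~\ref{th:reg3} places $w$ in $\A_c^\omega(\Wast;\widetilde H^2_\alpha(\Omega^\Lambda_H);\S)$; by Corollary~\ref{th:reg4}, on each interval $[a_j,a_{j+1}]$ between consecutive singular points $w$ admits a decomposition $w_0^j+\sqrt{\alpha-a_j}\,w_1^j+\sqrt{\alpha-a_{j+1}}\,w_2^j$ with $C^\infty$ coefficients valued in $\widetilde H^2(\Omega^\Lambda_H)$ — that is, $w$ satisfies Assumption~\ref{asp2} on $[A_0,A_1]$. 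Then Theorem~\ref{th:regul_trans} (with Assumption~\ref{th:asp} on $g$, and the hypothesis $h=O(t^{m+1})$, resp.\ $h=o(t^{n+1})$) yields $v\in C_p^{2m}([A_0,A_1];\widetilde H^2_{g(t)}(\Omega^\Lambda_H))\cap C^\infty([A_0,A_1];\widetilde H^2_{g(t)}(\Omega^\Lambda_H))$, resp.\ $v\in C_p^\infty$, together with the bound $\|\partial_t^j v\|_{H^2}\leq C(j,g)\vertiii{w}_{H^2}$.

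With this regularity, Theorem~\ref{th:interp1} applies directly on $\Wast=[a_0,a_1]$ and delivers
\begin{equation*}
\min_{v_{N,h}\in\widetilde X_{N,h}}\|v-v_{N,h}\|_{L^2(\Wast;H^1_{g(t)}(\Omega^\Lambda_H))}\leq C\left(N^{-2m-1/2}+h\right)\|v\|_{C^{2m+1}([A_0,A_1];H^2_{g(t)}(\Omega^\Lambda_H))},
\end{equation*}
which is the first assertion; the case $h=o(t^{n+1})$ for arbitrary $n$ follows by the same argument using the corollary to Theorem~\ref{th:regul_trans}, since then $v\in C_p^{2n}$ for every $n$ and one may choose $n$ as large as desired in Theorem~\ref{th:interp1}. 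For Case~2, where $\Wast=(a_0,a_2]$ splits into $[a_0,a_1]\cup[a_1,a_2]$, I would apply the estimate on each subinterval separately and add, noting that $v$ satisfies Assumption~\ref{asp2} on each piece by Corollary~\ref{th:reg4}; the constant is adjusted by a factor of at most two.

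The main obstacle, such as it is, is not a deep one: it is making sure the norm on the right-hand side is the correct and finite one. One must check that $\|v\|_{C^{2m+1}([A_0,A_1];H^2(\Omega^\Lambda_H))}$ is controlled by $\vertiii{w}_{H^2(\Omega^\Lambda_H)}$ uniformly — this is exactly the content of the bound in Theorem~\ref{th:regul_trans}, but one should be careful that the decomposition from Corollary~\ref{th:reg4} is valid up to and including the endpoints $a_j$ (Corollary~\ref{th:reg4} is stated on the closed interval, so this is fine) and that the coefficient functions inherit the $\widetilde H^2$ (rather than merely $H^2$) spatial regularity, so that $v_{N,h}$ built from the $H^1_0(\Omega^\Lambda_H)$-conforming basis $\{\psi^{(\ell)}_M\}$ is an admissible competitor. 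Everything else is a direct citation chain, so the proof is short.
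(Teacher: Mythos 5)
Your proposal is correct and follows essentially the same route as the paper: Corollary~\ref{th:reg4} gives the square-root decomposition so that $w$ satisfies Assumption~\ref{asp2}, Theorem~\ref{th:regul_trans} transfers this to the regularity of $v=w(g(\cdot),\cdot)g'$, and Theorem~\ref{th:interp1} then yields the approximation bound in each of the two cases. Your additional remarks on the variational-equation bookkeeping and the splitting of $\Wast$ into two subintervals in Case~2 are details the paper's proof leaves implicit, but they do not change the argument.
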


\begin{proof}
As $u^i$ satisfies Assumption \ref{asp} and $\zeta,\,\zeta_p\in C^{2,1}(\R)$, from Corollary \ref{th:reg4}, for the interval $\Wast=[a_0,a_1]$, there are three functions $w_0,\,w_1,\,w_2\in C^\infty(\Wast;\widetilde{H}^2_\alpha(\Omega^\Lambda_H))$ such that
\begin{equation*}
w=w_2+\sqrt{\alpha-A_0}\, w_0+\sqrt{\alpha-A_1}\,w_1.
\end{equation*}
So $w$ also satisfies the conditions in Assumption \ref{asp2}. From the definition
\begin{equation*}
v(t,\cdot)=w(g(t),\cdot)g'(t).
\end{equation*}
From Theorem \ref{th:regul_trans}, there are two different cases. 

1) If $h(t)=O(t^{m+1})$ as $t\rightarrow 0^+$ for some $m\in\N$, $v\in C_p^{2m}(\Wast;S(W))\cap C^{2m+1}(\Wast;S(W))$, then from Theorem \ref{th:interp1}, the approximation of $v$ in $\widetilde{X}_{N,h}$ is bounded by
\begin{equation*}
\min_{v_{N,h}\in \widetilde{X}_{N,h}}\left\|v-v_{N,h}\right\|_{\Wast;H^1(\Omega^\Lambda_H))}\leq C(N^{-2m-1/2}+h)\high{\|v\|_{C^{2m+1}([A_0,A_1];H^2(\Omega^\Lambda_H))}}.
\end{equation*}

2) If $h(t)=o(t^{n+1})$ as $t\rightarrow 0^+$ for any $n\in\N$, $v\in C_p^\infty(\Wast;S(W))$, then from Theorem \ref{th:interp1}, the approximation of $v$ in $\widetilde{X}_{N,h}$ is bounded by
\begin{equation*}
\min_{v_{N,h}\in \widetilde{X}_{N,h}}\left\|v-v_{N,h}\right\|_{L^2(\Wast;H^1(\Omega^\Lambda_H))}\leq C(N^{-2n-1/2}+h)\high{\|v\|_{C^{2n+1}([A_0,A_1];H^2(\Omega^\Lambda_H))}}
\end{equation*}
for any $n\in\N$.
\end{proof}

Similar settings and results could be obtained for $\Wast=[a_0,a_2]$, with nodal points $t_j$ are chosen in the same way as Theorem \ref{th:invBloch_num}. Following the proof of Theorem 9 in \cite{Lechl2017}, the error estimate of the finite element method is obtained.
\begin{theorem}
\label{th:FEM}
Assume that $u^i\in H_r^2(\Omega_H)$ for $r\geq 1/2$ satisfies Assumption \ref{asp2}, $\zeta,\,\zeta_p\in C^{2,1}(\R)$, then the linear system \eqref{eq:var_bloch_trans} is uniquely solvable in $\widetilde{X}_{N,h}$ for large enough $N$ and small enough $h>0$. 

1) If $h(t)=O(t^{m+1})$ for some positive integer $m$, then the solution $w_{N,h}\in \widetilde{X}_{N,h}$ satisfies the error estimate
 \begin{equation}
\|v-v_{N,h}\|_{L^2(\Wast;H^\ell(\Omega^\Lambda_H))}\leq C h^{1-\ell}\left(N^{-2m-1/2}+h\right)\high{\|v\|_{C^{2m+1}([A_0,A_1];H^2(\Omega^\Lambda_H))}},\quad \ell=0,1,
\end{equation}
where $C$ depends on $m,u,g$.

2)If $h(t)=o(t^{n+1})$ for any positive integer $n$, then the solution $w_{N,h}\in \widetilde{X}_{N,h}$ satisfies the error estimate
 \begin{equation}
\|v-v_{N,h}\|_{L^2(\Wast;H^\ell(\Omega^\Lambda_H))}\leq C h^{1-\ell}\left(N^{-2n-1/2}+h\right)\high{\|v\|_{C^{2n+1}([A_0,A_1];H^2(\Omega^\Lambda_H))}},\quad \ell=0,1,
\end{equation}
for any $n\in\N$, where $C$ depends on $n,u,g$.
\end{theorem}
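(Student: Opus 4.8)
The plan is to reproduce the convergence analysis of Theorem~9 of \cite{Lechl2017} (the proof behind Theorem~\ref{th:numeric_old} above) almost verbatim, the only genuinely new ingredient being the best-approximation estimate of Theorem~\ref{th:interp2}; everything else is transported through the $C^\infty$ reparametrisation $\alpha=g(t)$. First I would write \eqref{eq:var_bloch_trans} abstractly as $\mathcal B(v,\psi)=\mathcal F(\psi)$ on $H_0^r(\Wast;\widetilde H^1_\alpha(\Omega^\Lambda_H);g)$ and note that, since $g$ restricts to a diffeomorphism of each cell with $g'>0$ on its interior, the substitution $\alpha=g(t)$ turns $\mathcal B$ into the sesquilinear form of \eqref{eq:var_bloch} in the variable $w$ and turns $\widetilde X_{N,h}$ into a finite-dimensional subspace $X_{N,h}^{g}\subset L^2(\Wast;\widetilde H^1_\alpha(\Omega^\Lambda_H))$. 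Hence unique solvability of the discrete problem in $\widetilde X_{N,h}$ is equivalent to unique solvability of the Galerkin problem for \eqref{eq:var_bloch} in $X_{N,h}^{g}$, and the error norms of the statement translate into the corresponding ($g'$-weighted) norms for $w$; this is exactly the setting of \cite{Lechl2017}, with $X_{N,h}^{g}$ in place of the piecewise-constant space $X_{N,h}$.

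Second, I would establish discrete well-posedness and quasi-optimality. The sesquilinear form of \eqref{eq:var_bloch} (the integrated $\alpha$-dependent form together with the non-local term $b(\J^{-1}_\Omega\cdot,\J^{-1}_\Omega\cdot)$) satisfies a G\aa rding inequality on $L^2(\Wast;\widetilde H^1_\alpha(\Omega^\Lambda_H))$, differing from a coercive form by a compact operator; this is proved in \cite{Lechl2016,Lechl2017} and is unaffected by $g$. Combining this with the unique solvability of the continuous problem (Theorem~\ref{th:uni_solv} together with the equivalence of Lemma~\ref{lm:ini_solv}) and with the approximation property of $X_{N,h}^{g}$ --- equivalently of $\widetilde X_{N,h}$, which by Theorem~\ref{th:interp2} satisfies $\inf_{\chi\in\widetilde X_{N,h}}\|v-\chi\|_{L^2(\Wast;H^1(\Omega^\Lambda_H))}\to 0$ as $N\to\infty$ and $h\to 0$ --- a Schatz-type perturbation argument (exactly the one in the proof of Theorem~9 of \cite{Lechl2017}) yields that, for $N$ large enough and $h>0$ small enough, the discrete problem has a unique solution $v_{N,h}$ and is quasi-optimal:
\begin{equation*}
\|v-v_{N,h}\|_{L^2(\Wast;H^1(\Omega^\Lambda_H))}\le C\min_{v_{N,h}\in\widetilde X_{N,h}}\|v-v_{N,h}\|_{L^2(\Wast;H^1(\Omega^\Lambda_H))}.
\end{equation*}
Inserting Theorem~\ref{th:interp2} then gives the case $\ell=1$ of both estimates, with the super-algebraic rate $N^{-2m-1/2}$ when $h(t)=O(t^{m+1})$ and $N^{-2n-1/2}$ for every $n\in\N$ when $h(t)=o(t^{n+1})$.

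Third, for the case $\ell=0$ I would run an Aubin--Nitsche duality argument. Let $z$ solve the adjoint problem associated with \eqref{eq:var_bloch_trans} with right-hand side $v-v_{N,h}\in L^2(\Wast;L^2(\Omega^\Lambda_H))$; this problem has the same structure as \eqref{eq:var_bloch_trans}, and since $\zeta,\zeta_p\in C^{2,1}(\R)$ elliptic regularity (the $g$-transported version of Theorem~\ref{th:higherregu}) gives $z\in L^2(\Wast;\widetilde H^2_{g(t)}(\Omega^\Lambda_H))$ with norm controlled by $\|v-v_{N,h}\|_{L^2(\Wast;L^2(\Omega^\Lambda_H))}$, so that the best-approximation estimate of Theorem~\ref{th:interp2} applied to $z$ supplies $\inf_{\chi}\|z-\chi\|_{L^2(\Wast;H^1(\Omega^\Lambda_H))}\le Ch\,\|v-v_{N,h}\|_{L^2(\Wast;L^2(\Omega^\Lambda_H))}$. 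Galerkin orthogonality then yields $\|v-v_{N,h}\|_{L^2(\Wast;L^2(\Omega^\Lambda_H))}\le Ch\,\|v-v_{N,h}\|_{L^2(\Wast;H^1(\Omega^\Lambda_H))}$, which together with the previous step produces the factor $h^{1-\ell}$. The case $\Wast=[a_0,a_2]$ is treated identically, cell by cell, with the nodes of Theorem~\ref{th:invBloch_num}.

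The step I expect to be the main obstacle is the discrete stability in the second paragraph: because the coupling term $b(\widetilde\J^{-1}_\Omega\cdot,\widetilde\J^{-1}_\Omega\cdot)$ links all values of $\alpha$ and is defined on the unbounded strip $\Omega^H$, the compactness required for the Schatz argument cannot be read off a Rellich embedding but must be extracted, as in \cite{Lechl2016b,Lechl2017}, from the isomorphism property of the Bloch transform between the weighted Sobolev spaces. One must also check that the endpoint degeneracy of $g'$ (which vanishes to order $2m+1$ at the singular points) does not spoil these bounds, and --- if the scheme assembles the coupling term with the trapezoidal inverse Bloch transform rather than with $\widetilde\J^{-1}_\Omega$ itself --- that the resulting consistency error is controlled by Theorem~\ref{th:invBloch_num}, so that a Strang-type estimate still delivers quasi-optimality with the claimed rates.
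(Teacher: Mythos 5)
Your proposal is correct and follows essentially the same route as the paper, which proves Theorem \ref{th:FEM} simply by invoking the argument of Theorem 9 in \cite{Lechl2017} (G\aa rding/Schatz quasi-optimality plus duality for the $\ell=0$ case) and substituting the new best-approximation estimate of Theorem \ref{th:interp2}; your write-up just makes those borrowed steps explicit. The obstacle you flag about the non-local coupling term is handled in the cited reference, not re-proved here, so nothing further is needed.
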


\section{Numerical Results}

In this section, we will show some numerical \high{results} with the finite element method introduced in this paper. The periodic function is defined by
\begin{equation*}
\zeta(t)=1.9+\frac{\sin (t)}{3}-\frac{\cos(2t)}{4}
\end{equation*}
with the local perturbation defined by
\begin{equation*}
\zeta_p(t)-\zeta(t)=\begin{cases}
\exp\left(\frac{1}{t^2-1}\right)\sin\left[\pi(t+1)\right],\quad\text{ where }t\in[-1,1];\\
0,\quad\text{ otherwise.}
\end{cases}
\end{equation*}
In all of the numerical examples, we fix $H=4$, and the period $\Lambda=2\pi$, thus $\Lambda^*=1$.

\begin{figure}[htb]
\centering
\begin{tabular}{c c}
\includegraphics[width=0.4\textwidth]{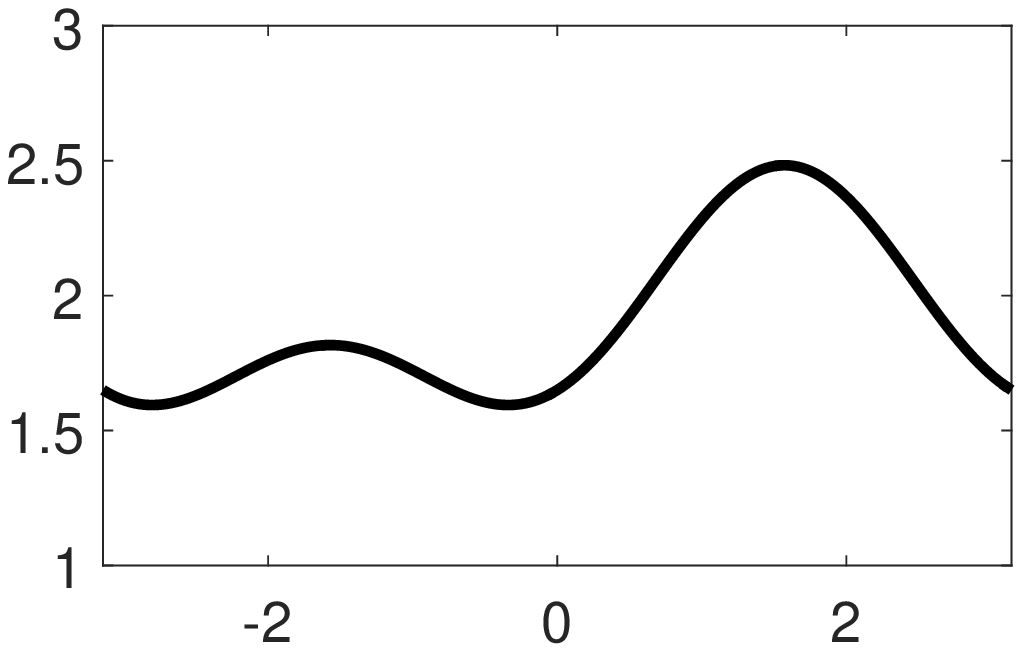} 
& \includegraphics[width=0.4\textwidth]{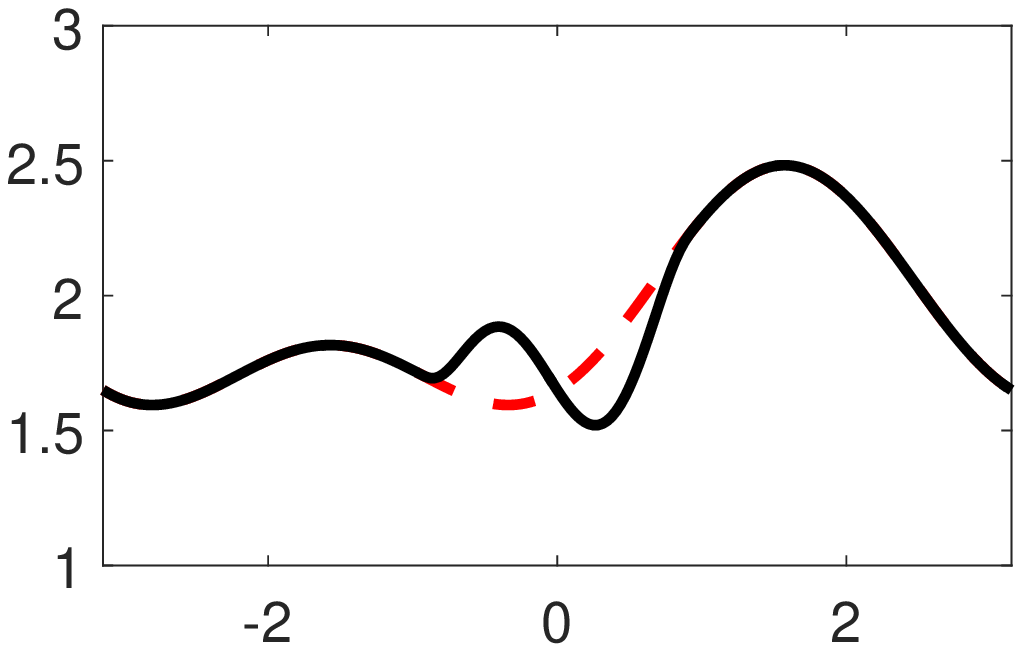}\\[-0cm]
(a) & (b)  
\end{tabular}%
\caption{(a): Periodic surface $\Gamma$; (b): locally perturbed periodic surface $\Gamma_p$.}
\label{fig:surface1}
\end{figure}

We will show the results for two groups. For each group, we consider two different choices of the integration contour, i.e., the function $g$. 

\noindent
{\bf Choice 1.} $g(t)$ is defined in $[A_0,A_1]$ by  
\begin{equation*}
g_1(t)=A_0+\frac{A_1-A_0}{\int_{A_0}^{A_1} (s-A_0)^3(s-A_1)^3\d s}\left[\int_{A_0}^t (s-A_0)^3(s-A_1)^3\d s\right],
\end{equation*}
thus $h(t)=O(t^2)$ as $t\rightarrow 0^+$. 

\noindent
{\bf Choice 2.} $g(t)$ is defined in $[A_0,A_1]$ by 
\begin{equation*}
g_2(t)=A_0+\frac{A_1-A_0}{\int_{A_0}^{A_1} \exp\left(\frac{1}{(s-A_0)(s-A_1)}\right)\d s}\left[\int_{A_0}^t \exp\left(\frac{1}{(s-A_0)(s-A_1)}\right)\d s\right],
\end{equation*}
thus $h(t)=o(t^n)$ as $t\rightarrow 0^+$ for any $n\in\N$.

\begin{figure}[htb]
\centering
\begin{tabular}{c c}
\includegraphics[width=0.4\textwidth]{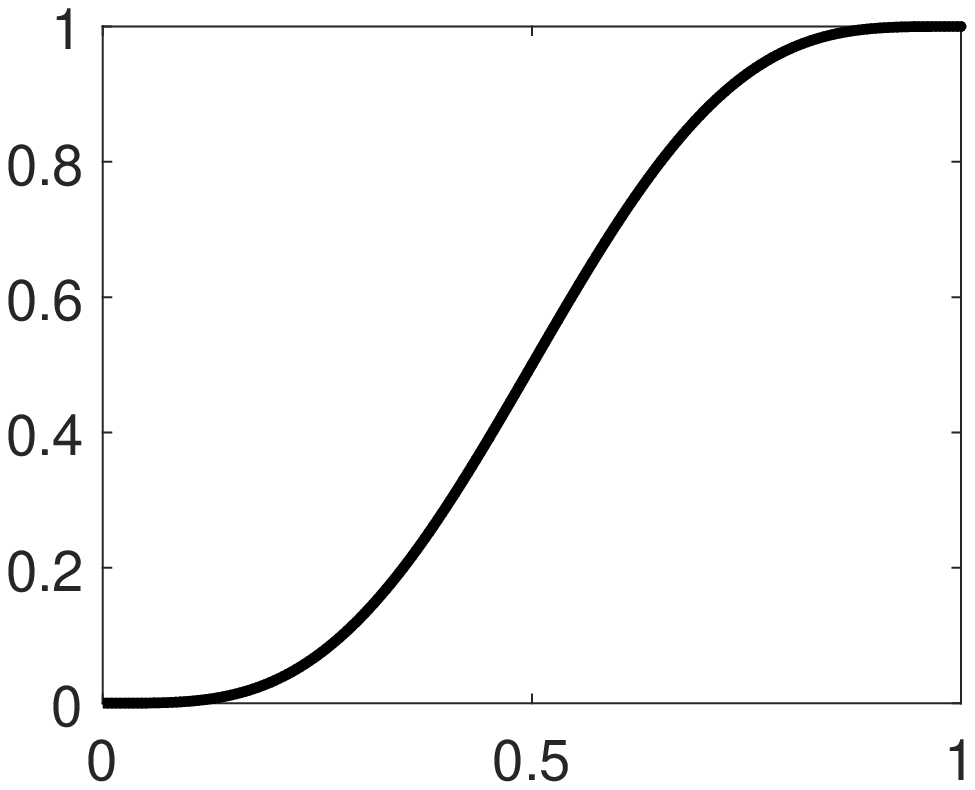} 
& \includegraphics[width=0.4\textwidth]{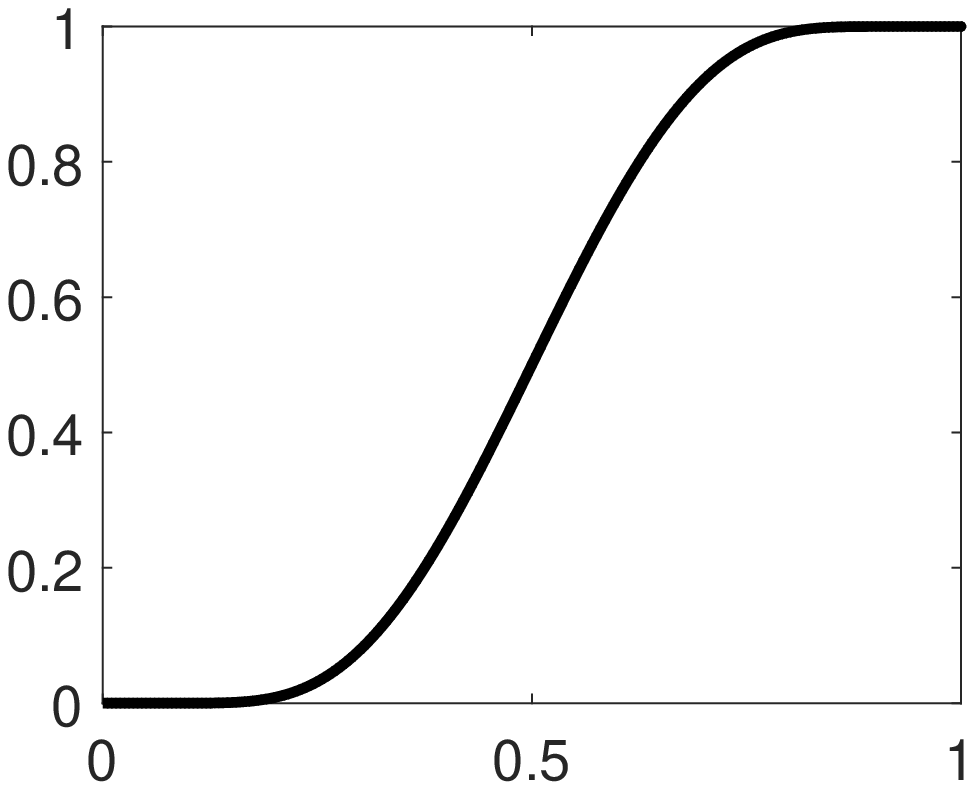}\\[-0cm]
(a) & (b)  
\end{tabular}%
\caption{(a): function $g_1$; (b): function $g_2$.}
\label{fig:surface2}
\end{figure}

Each group contains four different examples. In the four examples, two of them are with non-perturbed surface $\Gamma$ and another two are with locally perturbed surface $\Gamma_p$. The incident field is defined by a half-space Green's function, that is defined as
\begin{equation*}
u^i_G(x,y)=\frac{\i}{4}\left[H_0^{(1)}(k|x-y|)-H_0^{(1)}(k|x-y'|)\right]
\end{equation*}
where $y=(y_1,y_2)^\top$ and $y'=(y_1,-y_2)^\top$, or the upward propagating Herglotz wave function
\begin{equation*}
u^i_+(x)=\int_{-\pi/2}^{\pi/2}\exp\left[\i k(\sin\theta x_1+\cos\theta x_2)\right]\cos^2(\theta)\d\theta,
\end{equation*}
or the downward propagating Herglotz wave function
\begin{equation*}
u^i_-(x)=\int_{-\pi/2}^{\pi/2}\exp\left[\i k(\sin\theta x_1-\cos\theta x_2)\right]\cos^2(\theta)\d\theta.
\end{equation*}

\noindent
{\bf Group 1.} In this group, we show four upward propagating incident fields scattered by both perturbed and non-perturbed surface. \high{As the upward propagating incident fields satisfy the Helmholtz equation above the surface and also satisfy the radiation condition, from the boundary condition that $u^s=-u^i$ and the unique solvability of the direct scattering problem, $u^s=-u^i$ above the (perturbed) surface.} The parameter $h$ is fixed to be $0.01$. The $L^2$-relative errors, defined by $\|u^s_{N,h}-u^s\|_{L^2(\Gamma^\Lambda_H)}/\|u^s\|_{L^2(\Gamma^\Lambda_H)}$, for $N=4,8,16,32,64$ in Table \ref{group1_2} for $g=g_1$ and in Table \ref{group1_infty} for $g=g_2$.
\begin{enumerate}
\item The surface is perturbed. The incident field is the half-space Green's function $u^i_G(\cdot,y)$ with $y=(0.5,0.4)^\top$,  $k=1$.
\item The surface is not perturbed. The incident field is the half-space Green's function $u^i_G(\cdot,y)$ with$y=(-1,0.4)^\top$, $k=1$.
\item The surface is perturbed. The incident field is the  upward propagating Herglotz wave function $u^i_+$, $k=\sqrt{2}$
\item  The surface is not perturbed. The incident field is the upward propagating Herglotz wave function $u^i_+$, $k=1.5$.
\end{enumerate}

\begin{table}[hhhtttttt]
\centering
\begin{tabular}
{|p{2cm}<{\centering}||p{2cm}<{\centering}|p{2cm}<{\centering}
 |p{2cm}<{\centering}|p{2cm}<{\centering}|p{2cm}<{\centering}|p{1.8cm}<{\centering}|}
\hline
  & $Eg\, 1$ & $Eg\, 2$ & $Eg\, 3$ & $Eg\, 4$ \\
\hline
\hline
$N=4$&$7.95$E$-02$&$9.14$E$-02$&$4.76$E$-02$&$4.76$E$-02$\\
\hline
$N=8$&$8.28$E$-04$&$8.43$E$-04$&$5.24$E$-04$&$5.27$E$-04$\\
\hline
$N=16$&$5.78$E$-05$&$3.25$E$-05$&$5.26$E$-05$&$3.20$E$-05$\\
\hline
$N=32$&$2.23$E$-05$&$1.50$E$-05$&$3.75$E$-05$&$2.38$E$-05$\\
\hline
$N=64$&$2.12$E$-05$&$1.50$E$-05$&$3.69$E$-05$&$2.38$E$-05$\\
\hline
\end{tabular}
\caption{Group 1, $g=g_1$.}
\label{group1_2}
\end{table}

\begin{table}[hhhtttttt]
\centering
\begin{tabular}
{|p{2cm}<{\centering}||p{2cm}<{\centering}|p{2cm}<{\centering}
 |p{2cm}<{\centering}|p{2cm}<{\centering}|p{2cm}<{\centering}|p{1.8cm}<{\centering}|}
\hline
  & $Eg\, 1$ & $Eg\, 2$ & $Eg\, 3$ & $Eg\, 4$\\
\hline
\hline
$N=4$&$1.99$E$-01$&$2.22$E$-01$&$1.15$E$-01$&$1.15$E$-01$\\
\hline
$N=8$&$5.03$E$-04$&$6.27$E$-04$&$3.42$E$-04$&$3.43$E$-04$\\
\hline
$N=16$&$2.35$E$-05$&$2.05$E$-05$&$3.75$E$-05$&$2.49$E$-05$\\
\hline
$N=32$&$2.17$E$-05$&$1.49$E$-05$&$3.73$E$-05$&$2.38$E$-05$\\
\hline
$N=64$&$2.13$E$-05$&$1.49$E$-05$&$3.70$E$-05$&$2.38$E$-05$\\
\hline
\end{tabular}
\caption{Group 1, $g=g_2$.}
\label{group1_infty}
\end{table}

\noindent
{\bf Group 2.} In this group, we show four downward propagating incident fields scattered by both perturbed and non-perturbed surface. As the exact value of the scattered field could not be written out, we set the solution with \high{$N=256$} be the "exact value", and show the relative errors for $N=4,8,16,32,64$ in Table \ref{group2_2} for $g=g_1$ and in Table \ref{group2_infty} for $g=g_2$. Figure \ref{fig:err} plots in logarithmic scale of the relative $L^2$-errors for the examples, see (a) for $g=g_1$ and (b) for $g=g_2$. The mesh size $h$ is fixed to be $0.02$.
\begin{enumerate}
\item The surface is perturbed. The incident field is the half-space Green's function $u^i_G(\cdot,y)$ with $y=(0.5,3)^\top$, $k=1$.
\item The surface is non-perturbed. The incident field is the half-space Green's function $u^i_G(\cdot,y)$ with $y=(-1,3)^\top$, $k=1.5$.
\item The surface is perturbed. The incident field is the downward propagating Herglotz wave function $u^i_-$, $k=\sqrt{2}$.
\item The surface is non-perturbed.  The incident field is the downward propagating Herglotz wave function $u^i_-$, $k=2.01$.
\end{enumerate}

\begin{table}[hhhtttttt]
\centering
\begin{tabular}
{|p{2cm}<{\centering}||p{2cm}<{\centering}|p{2cm}<{\centering}
 |p{2cm}<{\centering}|p{2cm}<{\centering}|p{2cm}<{\centering}|p{1.8cm}<{\centering}|}
\hline
  & $Eg\, 1$ & $Eg\, 2$ & $Eg\, 3$ & $Eg\, 4$\\
\hline
\hline
$N=4$&$2.19$E$-01$&$6.12$E$-01$&$1.23$E$-02$&$2.49$E$-02$\\
\hline
$N=8$&$1.71$E$-03$&$3.84$E$-03$&$4.98$E$-04$&$5.62$E$-04$\\
\hline
$N=16$&$9.24$E$-05$&$1.44$E$-04$&$3.17$E$-05$&$3.50$E$-05$\\
\hline
$N=32$&$5.64$E$-06$&$8.72$E$-06$&$1.99$E$-06$&$2.19$E$-06$\\
\hline
$N=64$&$3.49$E$-07$&$5.38$E$-07$&$1.24$E$-07$&$1.36$E$-07$\\
\hline
\end{tabular}
\caption{Group 2, $g=g_1$}
\label{group2_2}
\end{table}

\begin{table}[hhhtttttt]
\centering
\begin{tabular}
{|p{2cm}<{\centering}||p{2cm}<{\centering}|p{2cm}<{\centering}
 |p{2cm}<{\centering}|p{2cm}<{\centering}|p{2cm}<{\centering}|p{1.8cm}<{\centering}|}
\hline
  & $Eg\, 1$ & $Eg\, 2$ & $Eg\, 3$ & $Eg\, 4$\\
\hline
\hline
$N=4$&$4.76$E$-01$&$1.05$E$+00$&$3.24$E$-02$&$6.20$E$-02$\\
\hline
$N=8$&$4.28$E$-03$&$2.34$E$-02$&$9.30$E$-05$&$2.39$E$-04$\\
\hline
$N=16$&$2.30$E$-05$&$3.76$E$-05$&$7.15$E$-06$&$8.05$E$-06$\\
\hline
$N=32$&$2.61$E$-08$&$4.04$E$-08$&$9.20$E$-09$&$1.01$E$-08$\\
\hline
$N=64$&$2.30$E$-12$&$2.90$E$-12$&$6.72$E$-13$&$6.60$E$-13$\\
\hline
\end{tabular}
\caption{Group 2, $g=g_2$}
\label{group2_infty}
\end{table}

\begin{figure}[htb]
\centering
\begin{tabular}{c c}
\includegraphics[width=0.45\textwidth]{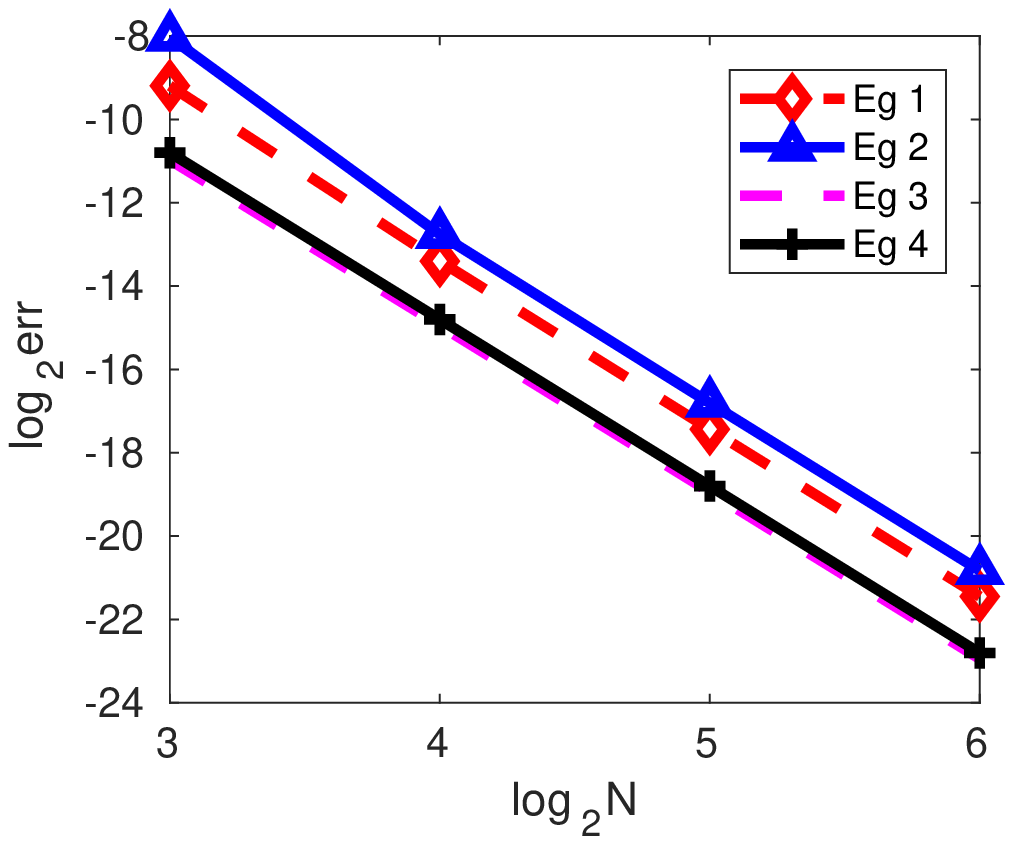} 
& \includegraphics[width=0.45\textwidth]{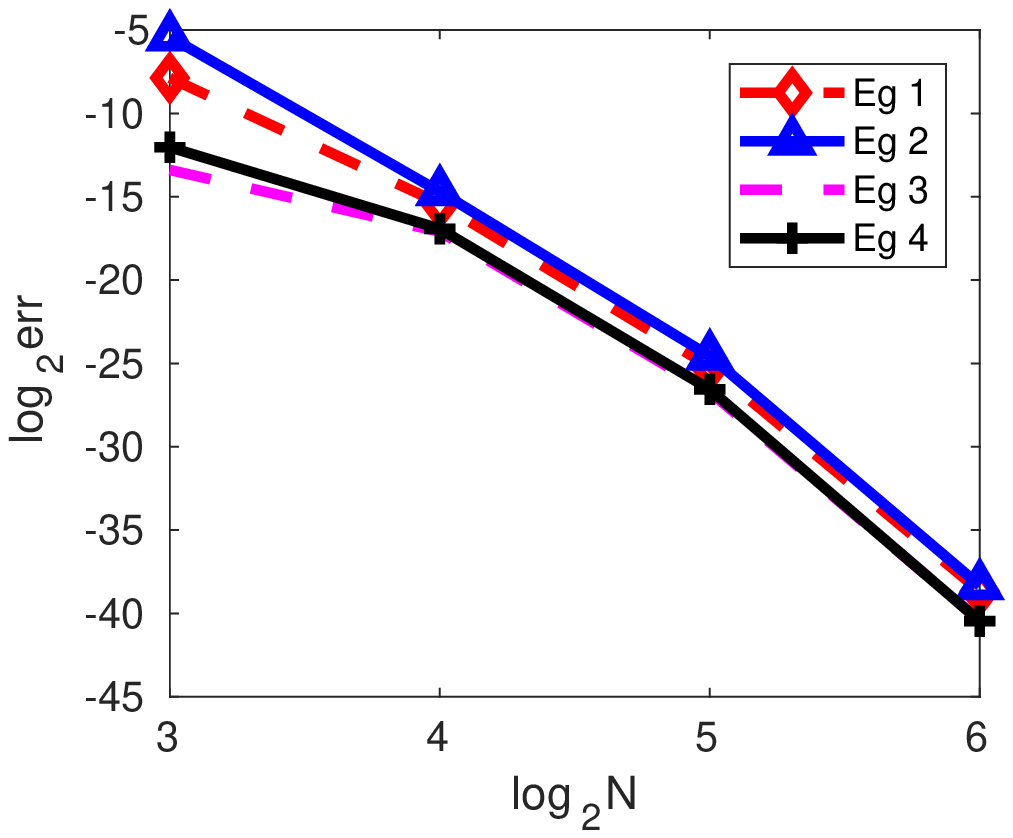}\\[-0cm]
(a) & (b)  
\end{tabular}%
\caption{(a): function $g_1$; (b): function $g_2$.}
\label{fig:err}
\end{figure}

First let's go to the examples in Choice 1. 
In Table \ref{group1_2} and \ref{group1_infty}, the relative error decays fast at first, and then stays at the level of $10^{-5}$ as $N$ gets larger ($N\geq 16$). As we have shown, the $L^2$-relative error is bounded by $C(N^{-2n-1.2}+h)h$, when $h$ is small enough, the error is brought by $N$ is comparatively larger, then when $N$ is relatively small, the error decays fast when $N$ increases. However, when $h$ is not small engough, the dominant part of error is brought by $h$, then the error will stay at a certain level when $N$ increases. Note that the first two examples Group 1 were shown in \cite{Lechl2016a} and \cite{Lechl2017}, as we list in Table \ref{group_old}.
\begin{table}[hhhtttttt]
\centering
\begin{tabular}
{|p{2cm}<{\centering}||p{2cm}<{\centering}|p{2cm}<{\centering}
 |}
\hline
  & $Eg\, 1$ & $Eg\, 2$ \\
\hline
\hline
$N=20$&$7.76$E$-03$&$1.58$E$-02$\\
\hline
$N=40$&$2.83$E$-03$&$5.60$E$-03$\\
\hline
$N=80$&$1.04$E$-03$&$1.98$E$-03$\\
\hline
$N=160$&$4.04$E$-04$&$7.01$E$-04$\\
\hline
$N=320$&$1.93$E$-04$&$2.49$E$-04$\\
\hline
\end{tabular}
\caption{Numerical results in previous papers.}
\label{group_old}
\end{table}

Compared to the first and second columns in Table \ref{group1_2} and Table \ref{group1_infty}, even the result produced by our new method in this  paper at $N=16$ is much better than that produced by the old method at $N=320$. This means during the computational procession, we can save a lot of  time and memory space in setting up the matrix and solving the linear system \ref{eq:linearsystem}.

Now let's go to the examples in Choice 2. 
In Table \ref{group2_2} and \ref{group2_infty}, the error decays as $N$ gets larger, and the logarithmic scale of the $L^2$-errors have been plotted in Figure \ref{fig:err}. 
Figure \ref{fig:err} shows the convergences of the new method for both $g=g_1$ and $g_2$. In (a), the error decreases at the rate that is about $O(N^{-4})$. For $m=1$, the expected error is $O(N^{-2m-1/2}=O(N^{-2-1/2})$, thus the error decays faster than expected. In (b), the errors decrease much more faster, thus it shows the super algebraic convergence rate of the numerical method.  The results in the two tables are good illustrations for the result in Theorem \ref{th:FEM}. With these results shown above, we are confident to say that the new method is very efficient to solve the (locally-perturbed) periodic scattering problems numerically.

\section*{Dedication.} This paper is dedicated to {\em Prof. Dr. Armin Lechleiter}, who passed away in January 2018 at a very young age. He was a talented mathematician, a helpful colleague, and a nice friend. His mathematical theory and numerical analysis for the
scattering problems from locally perturbed periodic surfaces, which have been cited several times, are essential for the work in this paper. The author is very grateful for his scientific contribution to this area and his advise during their collaboration. He will always be missed.

\section*{Acknowledgements.} The author was supported by the University of Bremen and the European Union FP7 COFUND under grant agreement n$^\circ{}\,$600411.

\bibliographystyle{alpha}
\bibliography{ip-biblio} 

\providecommand{\noopsort}[1]{}
\begin{thebibliography}{MACK00}

\bibitem[Atk89]{Atkin1989}
K.~E. Atkinson.
\newblock {\em An Introduction to Numerical Analysis}.
\newblock John Wiley \& Sons, Inc., 2nd edition, 1989.

\bibitem[Bao95]{Bao1995}
G.~Bao.
\newblock Finite element approximation of time harmonic waves in periodic
  structures.
\newblock {\em {SIAM} Journal on Numerical Analysis}, 32(4):1155--1169, 1995.

\bibitem[BS94]{Brenn1994}
S.~C. Brenner and L.~R. Scott.
\newblock {\em The Mathematical Theory of Finite Element Methods}.
\newblock Springer, New York, 1994.

\bibitem[CE10]{Chand2010}
S.~N. {Chandler-Wilde} and J.~Elschner.
\newblock Variational approach in weighted {S}obolev spaces to scattering by
  unbounded rough surfaces.
\newblock {\em SIAM. J. Math. Anal.}, 42:2554--2580, 2010.

\bibitem[CK13]{Colto2013}
D.~L. Colton and R.~Kress.
\newblock {\em Inverse acoustic and electromagnetic scattering theory}.
\newblock Springer, 3. edition, 2013.

\bibitem[CM05]{Chand2005}
S.~N. {Chandler-Wilde} and P.~Monk.
\newblock Existence, uniqueness, and variational methods for scattering by
  unbounded rough surfaces.
\newblock {\em SIAM. J. Math. Anal.}, 37:598--618, 2005.

\bibitem[Coa12]{Coatl2012}
J.~Coatl{\'e}ven.
\newblock {Helmholtz equation in periodic media with a line defect}.
\newblock {\em {J. Comp. Phys.}}, 231:1675--1704, 2012.

\bibitem[EHZ09]{Ehrhardt2009}
M.~Ehrhardt, H.~Han, and C.~Zheng.
\newblock Numerical simulation of waves in periodic structures.
\newblock {\em Commun. Comput. Phys.}, 5:849--870, 2009.

\bibitem[ESZ09]{Ehrhardt2009a}
M.~Ehrhardt, J.~Sun, and C.~Zheng.
\newblock Evaluation of scattering operators for semi-infinite periodic arrays.
\newblock {\em Commun. Math. Sci.}, 7:347--364, 2009.

\bibitem[FJ15]{Fliss2015}
S.~Fliss and P.~Joly.
\newblock {Solutions of the time-harmonic wave equation in periodic waveguides:
  asymptotic behaviour and radiation condition}.
\newblock {\em {Arch. Rational Mech. Anal.}}, 2015.

\bibitem[Fli09]{Fliss2009}
S.~Fliss.
\newblock {\em {Analyse math{\'e}matique et num{\'e}rique de probl{\`e}mes de
  propagation des ondes dans des milieux p{\'e}riodiques infinis localement
  perturb{\'e}s}}.
\newblock PhD thesis, Ecole Polytechnique, 2009.

\bibitem[HLP88]{Hardy1988}
G.~H. Hardy, J.~E. Littlewood, and G.~P{\'{o}}lya.
\newblock {\em Inequalities}.
\newblock Cambridge Mathematical Library. Cambridge University Press, 2nd
  edition, 1988.

\bibitem[HN16]{Hadda2016}
H.~Haddar and T.~P. Nguyen.
\newblock {A volume integral method for solving scattering problems from
  locally perturbed infinite periodic layers}.
\newblock {\em Appl. Anal.}, 96(1):130--158, 2016.

\bibitem[HNPX11]{George2011}
G.~C. Hsiao, N.~Nigam, J.~E. Pasciak, and L.~Xu.
\newblock Error analysis of the {DtN-FEM} for the scattering problem in
  acoustics via {F}ourier analysis.
\newblock {\em J. Comp. Appl. Math.}, 235(17):4949--4965, 2011.

\bibitem[JLF06]{Joly2006}
P.~Joly, J.-R. Li, and S.~Fliss.
\newblock Exact boundary conditions for periodic wave\-guides containing a
  local perturbation.
\newblock {\em Commun. Comput. Phys.}, 1:945--973, 2006.

\bibitem[Kat95]{Kato1995}
T.~Kato.
\newblock {\em Perturbation theory for linear operators}.
\newblock Springer, repr. of the 1980 edition, 1995.

\bibitem[Kir93]{Kirsc1993}
A.~Kirsch.
\newblock Diffraction by periodic structures.
\newblock In L.~P{\"a}varinta and E.~Somersalo, editors, {\em Proc. Lapland
  Conf. on Inverse Problems}, pages 87--102. Springer, 1993.

\bibitem[Lec17]{Lechl2016}
A.~Lechleiter.
\newblock The {F}loquet-{B}loch transform and scattering from locally perturbed
  periodic surfaces.
\newblock {\em J. Math. Anal. Appl.}, 446(1):605--627, 2017.

\bibitem[LN15]{Lechl2015e}
A.~Lechleiter and D.-L. Nguyen.
\newblock {Scattering of {H}erglotz waves from periodic structures and mapping
  properties of the {B}loch transform}.
\newblock {\em {Proc. Roy. Soc. Edinburgh Sect. A}}, 231:1283--1311, 2015.

\bibitem[LZ17a]{Lechl2016a}
A.~Lechleiter and R.~Zhang.
\newblock A convergent numerical scheme for scattering of aperiodic waves from
  periodic surfaces based on the {F}loquet-{B}loch transform.
\newblock {\em SIAM J. Numer. Anal}, 55(2):713--736, 2017.

\bibitem[LZ17b]{Lechl2017}
A.~Lechleiter and R.~Zhang.
\newblock A {F}loquet-{B}loch transform based numerical method for scattering
  from locally perturbed periodic surfaces.
\newblock {\em SIAM J. Sci. Comput.}, 39(5):B819--B839, 2017.

\bibitem[LZ17c]{Lechl2016b}
A.~Lechleiter and R.~Zhang.
\newblock Non-periodic acoustic and electromagnetic scattering from periodic
  structures in 3d.
\newblock {\em Comput. Math. Appl.}, 74(11):2723--2738, 2017.

\bibitem[MACK00]{Meier2000}
A.~Meier, T.~Arens, S.~N. {Chandler-Wilde}, and A.~Kirsch.
\newblock A {N}ystr{\"o}m method for a class of integral equations on the real
  line with applications to scattering by diffraction gratings and rough
  surfaces.
\newblock {\em J. Int. Equ. Appl.}, 12:281--321, 2000.

\bibitem[May84]{Maystre1984}
D.~Maystre.
\newblock Rigorous vector tehories of diffraction gratings.
\newblock {\em Progress in Optics}, 21:1--67, 1984.

\bibitem[Sch03]{Schmi2003}
G.~Schmidt.
\newblock On the diffraction by biperiodic anisotropic structures.
\newblock {\em Appl. Anal.}, 82:75--92, 2003.

\bibitem[Zha17]{Zhang2017d}
R.~Zhang.
\newblock The study of the bloch transform of the fields scattered by locally
  perturbed periodic surfaces.
\newblock {\em Preprint}, https://arxiv.org/pdf/1708.07560.pdf, 2017.

\end{thebibliography}

\end{document}